\newtheorem{defi}{Definition}[section]
\newtheorem{rem}[defi]{Remark}
\newtheorem{thm}[defi]{Theorem}
\newtheorem{cor}[defi]{Corrolary}
\newtheorem{lemma}[defi]{Lemma}
\newtheorem{fact}[defi]{Fact}
\numberwithin{equation}{section}
\newcommand{\n}{\in\mathbb{N}}
\newcommand{\li}{\lim\limits_{n\to\infty}}
\newcommand{\gr}[2]{\lim\limits_{#1\to #2}}
\newcommand{\eps}{\varepsilon}
\newcommand{\indyk}{ \mathds{1}  }
\newcommand{\ddp}[2]{\langle#1,#2\rangle}
\newcommand{\rbr}[1]{\left(#1\right)}
\newcommand{\mbr}[1]{\left|#1\right|}
\newcommand{\kbr}[1]{\left[#1\right]}
\newcommand{\cbr}[1]{\left\{#1\right\}}
\newcommand{\nbr}[1]{\left\lVert#1\right\rVert}
\newcommand{\pb}{\frac{1}{1+\beta}}
\newcommand{\E}{\mathbb{E}}
\newcommand{\R}{\mathbb{R}}
\newcommand{\Z}{\mathbb{Z}}
\newcommand{\PR}{\mathbb{P}}
\newcommand{\PG}{\mathcal{T}}
\newcommand{\pspace}{\mathcal{P}}
\newcommand{\suma}[3]{\displaystyle\sum\limits_{#1=#2}^{#3}}
\newcommand{\prd}[3]{\displaystyle\prod\limits_{#1=#2}^{#3}}
\newcommand{\calka}[2]{\int_{#1}^{#2}}
\begin{document}

\begin{frontmatter}

\title{CLT for supercritical branching processes with heavy-tailed branching law}
\runtitle{Heavy-tailed branching CLT}

\author{\fnms{Rafa\l} \snm{Marks}\ead[label=e1]{r.marks@mimuw.edu.pl}}
\address{R. Marks\\
Institute of mathematics\\
University of Warsaw,\\
Banacha 2, 02-097 Warsaw, Poland\\
\printead{e1}}
\and
\author{\fnms{Piotr} \snm{Mi\l o\'s}\ead[label=e2]{pmilos@mimuw.edu.pl}
\ead[label=u1,url]{https://www.mimuw.edu.pl/~pmilos}}
\address{P. Mi\l o\'s \\
Institute of mathematics\\
University of Warsaw,\\
Banacha 2, 02-097 Warsaw, Poland\\
\printead{e2}\\
\printead{u1} }
\affiliation{University of Warsaw}

\runauthor{R. Marks and P. Mi\l o\'s}

\begin{abstract}
Consider a branching system with particles moving according to an Ornstein-Uhlenbeck process with drift $\mu>0$ and branching according to a law in the domain of attraction of the $(1+\beta)$-stable distribution. The mean of the branching law is strictly larger than $1$ implying that the system is supercritical and the total number of particles grows exponentially at some rate $\lambda>0$.

It is known that the system obeys a law of large numbers, see \cite{Englander2010}. In the paper we study its rate of convergence.

We discover an interesting interplay between the branching rate $\lambda$ and the drift parameter $\mu$. There are three regimes of the second order behavior:\\
$\cdot$ small branching intensity, $\lambda <(1+1/\beta) \mu$, then the speed of convergence is the same as in the stable central limit theorem but the limit is affected by the dependence between particles\\
$\cdot$ critical branching intensity, $\lambda =(1+1/\beta) \mu$, then the dependence becomes strong enough to make the rate of convergence slightly smaller, yet the qualitative behaviour still resembles the stable central limit theorem\\
$\cdot$ large branching intensity, $\lambda > (1+1/\beta) \mu$, then the dependence manifests itself much more profoundly, the rate of convergence is substantially smaller and strangely the limit holds a.s.

Our work parallels and  completes the investigations in \cite{Adamczak2015} and \cite{Ren2014} for the branching laws with finite variance. 

The case of branching laws without the second moment required developing new methods.
\end{abstract}

\begin{keyword}[class=MSC]
\kwd[Primary ]{60F05}
\kwd{60J80}
\kwd[; secondary ]{60G20}
\end{keyword}

\begin{keyword}
\kwd{branching process}
\kwd{central limit theorem}
\end{keyword}

\end{frontmatter}

\section*{Introduction}
The law of large numbers presented in \cite{Englander2010} reveals that the empirical spatial distributions of particles of a supercritical branching system in many aspects resembles that of independent random variables. The dependence manifests when studying the second order. In \cite{Adamczak2015} authors discovered that in, what they call, the small branching regime the effect of the dependence is subtle. A CLT akin to the standard one holds but with a non-standard variance of the limit. The situation changes qualitatively in the so-called large branching regime. The normalization needs to be substantially larger, the limit is non-Gaussian and strikingly the convergence holds in $L^2$. The picture has been further refined and completed in \cite{Ren2014}.

The results described above hold for systems with the branching law having finite variance. The aim of this paper is to investigate a branching law in the domain of attraction of the stable law.

We pass to a formal description of our results. Consider a system of particles $\{X_t\}_{t\geq  0}$ initiated at time $t=0$ with a single particle located at $x\in\R^d$. This particle moves according to an Ornstein-Uhlenbeck process in $\R^d$. After exponential time with parameter $a>0$ the particle dies producing a random number of offspring. This number is distributed according to a branching law $\{p_n\}_{n\geq 0}$ whose generating function is
\begin{equation}\label{eq:generating_function}
	F(s)=ms-(m-1)+(m-1)(1-s)^{1+\beta},
\end{equation}
where $\beta\in(0,1), m>1$. Starting from the parent location the offspring evolve according to the same dynamics, moving and producing new particles. The movement processes, lifetimes and numbers of offspring are independent.

We recall that an Ornstein-Uhlenbeck is a stochastic process satisfying the stochastic differential equation $dY_t=\sigma  dB_t-\mu Y_t dt$, where $B$ is the standard Brownian motion in $\R^d$ and $\sigma>0,\mu>0$. The branching law is in the domain of attraction of a $(1+\beta)$-stable law. Importantly, we assume that its mean  $m=F'(1)$ is strictly bigger than $1$. Thus the system is supercritical, which in particular means that the total number of particles, denoted by $|X_t|$, grows exponentially as $e^{\lambda t}$, where $\lambda = a(m-1)$.

We need a functional space of functions of polynomial growth $\pspace$ defined as
$$\mathcal{P}=\left\{f:\R\mapsto\R:f\text{ is Borel and there exist }C,n>0\text{ s.t. }|f(x)|\leq C(1+|x|)^{n}\text{ for }x\in\R\right\}.$$
By $\cbr{X_t(u)}_{u=1}^{|X_t|}$ we denote positions of particles at time $t$ (we do not care about their ordering). Further, for a function $f\in \pspace$ let
$$\ddp{X_t}{f}=\suma{u}{1}{|X_t|} f(X_t(u)).$$
The aforementioned law of large numbers was stated in \cite[Theorem 6]{Englander2010} as follows
$$\frac{\ddp{X_t}{f}}{|X_t|}\rightarrow \ddp{\varphi}{f}\quad \hbox{a.s.}$$
where $\varphi$ is the stationary measure for the Ornstein-Uhlenbeck process (the result of \cite[Theorem 6]{Englander2010} holds for a much broader class of branching processes, however only for bounded $f$. The result for $f\in \pspace$ is stated in \cite[Theorem 3.1]{Adamczak2015}).
As already indicated, in this regime the behaviour is not affected by the fact that the particles' positions are dependent.

The principal aim of this paper is to find a speed of convergence in the above law of large numbers. Namely, we consider the limit of
$$\frac{\ddp{X_t}{f}-|X_t|\ddp{\varphi}{f}}{F_t},$$
where $F_t$ is some normalization.

Guided by the findings of \cite{Adamczak2015} we expect an interesting interplay of coarsening and smoothing. Notice that due to its local nature branching increases the spatial inequalities in the distribution of particles. Simply an area with more particles will produce more offspring. On the other hand the Ornstein-Uhlenbeck process is strongly mixing so it smooths the system. Imagine offspring of one particle, they will `forget` their parent's position exponentially fast. The speed of forgetting is quantified by $\mu$, the spectral gap of the Ornstein-Uhlenbeck process.

Resulting from this, there are three regimes of behaviour.

\begin{description}
\item[Small branching rate:] When $\lambda<\left(1+1/\beta\right)\mu$, the mixing induced by the Ornstein-Uhlenbeck process introduces a lot of independence to the system. Our result resembles the Gnedenko-Kolmogorov-type central limit theorem for i.i.d random variables. The limiting law is $(1+\beta)$-stable and the normalization is $F_t=|X_t|^{\frac{1}{1+\beta}}$ (it is more natural here to have a random normalization, we recall that $|X_t|\sim e^{\lambda t}$). The result is presented in Corollary~\ref{c:subcritical_simple}.
\item[Critical branching rate:] When $\lambda=\left(1+{1}/{\beta}\right)\mu$, the branching and mixing are almost in balance with the latter playing a slightly stronger role. The result again resembles the  Gnedenko-Kolmogorov-type theorem but the normalization $F_t=(t|X_t|)^{\frac{1}{1+\beta}}$ is slightly bigger than the standard one. The limiting law is $(1+\beta)$-stable. The result is presented in Corollary~\ref{c:critical_simple}.
\item[Large branching rate:] When $\lambda>\left(1+{1}/{\beta}\right)\mu$, the branching is so fast that, up to some extent, the whole genealogical structure is preserved in the limit (e.g. it depends on the initial position of the first particle). The normalization $F_t=e^{(\lambda-\mu)t}$ is much larger than $|X_t|^{\frac{1}{1+\beta}}\sim e^{\frac{1}{1+\beta}\lambda t}$. Perhaps surprisingly, the convergence holds almost surely and in $L^{1+\gamma}$ for $0\leq\gamma<\beta$. The result is presented in Corollary~\ref{c:supercritical_simple}.
\end{description}

The last result calls for higher order analysis, in fact also the limits in the first two cases might be $0$. This can be determined by the expansion of $f$ in the Hermite polynomials basis. The boundaries between cases change to the ones depending on the parameter $\kappa(f)$, which is the lowest non-zero coefficient in the expansion. In Theorem~\ref{t:subcritical}, Theorem~\ref{t:critical} and Theorem \ref{t:supercritical} we present results which take this into account, i.e. we find the normalizations which give non-trivial limits.

The phenomena studied in our paper are qualitatively much alike the ones discovered in \cite{Adamczak2015} and \cite{Ren2014}. Altogether they give a comprehensive picture - there are three regimes with boundaries depending on $\lambda, \mu$ and the domain of attraction. We mention also \cite{Adamczak2014}, which studying $U$-statistics related to the system gave more insight into the dependence structure. We think that analogous results hold for the infinite variance case.

The proof strategies of \cite{Adamczak2015} and \cite{Ren2014} rely heavily on $L^2$ calculations rendering them inapplicable in our infinite variance setting. In this paper we developed new methods. Interestingly, they can be also applied in the setting of \cite{Adamczak2015} and \cite{Ren2014} yielding shorter and arguably simpler proofs. Moreover, we show almost sure convergence in the large branching rate case, which is missing in \cite{Adamczak2015} and \cite{Ren2014}.

Branching processes have been an active research field for a long time. It is beyond the scope of this paper to present a general picture of the field instead we refer to the classical book~\cite{Athreya1972} and more recent~\cite{LeGall1999}. The interest in the spatial distribution of the particles dated back to the seventies \cite{Asmussen1976a},\cite{Asmussen1976b} and reemerged more recently, see \cite{Biggins1990}, \cite{Englander2006}, \cite{Englander2010}, \cite{Liu2013}, \cite{Harris2014}, \cite{Eckhoff2015}, \cite{Iksanov2015}, \cite{Iksanov2016}.

Notably, the authors of \cite{Englander2010} have been able to establish the convergence in a general setting of space dependent branching intensities. The first results concerning the corresponding speed of convergence appeared in~\cite{Adamczak2015}. The authors revealed the three regimes picture, much alike to the one presented above. These were further extended and refined in \cite{Mios2012},  \cite{Ren2014},  \cite{Ren2015}, \cite{Ren2017}, \cite{Ren2017c}, \cite{Wang2017}.

The paper is organised as follows. In Section~\ref{s:preliminaries} we prove preliminary facts which we use in further proofs. In Section~\ref{s:idea} we outline the main idea of proofs. The idea is formalised in Lemma \ref{l:crucial} and Lemma \ref{l:jointConvergence}, which are the core technical facts. We use them to prove our main results in relatively short Section~\ref{s:large}, Section~\ref{s:critical} and Section~\ref{s:small}. In last Section~\ref{s:open} we gather conclusions and open questions.

\section{Preliminaries}\label{s:preliminaries}
For a vector $x\in\R^d$ by $|x|$ we denote its Euclidean norm and by $\circ$ the standard scalar product. $(e_1,\ldots, e_d)$ denotes the standard basis of $\R^d$. Slightly abusing notation, for functions $f:\R^d\rightarrow\R,g:\R^d\rightarrow \R^d$ by $\ddp{g}{f}$ we denote a vector $(\ddp{g_1}{f},\ldots ,\ddp{g_d}{f})$.
\\
\\
By $X_{t}^x$ we denote the OU-branching system starting at time $0$ from $x\in\R^d$ and by $X_{s}^{u,t}$ - the subsystem starting from a particle $u\in X_{t}$ at time $t+s$. By $\cbr{\mathcal{F}_t}_{t\geq  0}$ we denote the natural filtration for the process $X$.

\subsection{Ornstein-Uhlenbeck semigroup and Hermite polynomials}
In this section we gather definition and results about the Ornstein-Uhlenbeck process and Hermite polynomials, which are required to state and prove general results in subsequent sections. The Ornstein-Uhlenbeck process has a generator $L$ given by
\begin{equation}\label{eq:infinitesimal_operator}
  L=\frac{\sigma^2}{2}\Delta-\mu (x\circ \nabla).
\end{equation}
By $\PG_t f(x)$ we denote the associated semigroup and recalling $\lambda$ we denote
\begin{equation}\label{e:pg}
  \PG_{t}^{\lambda}f:= e^{\lambda t}\PG_t f.
\end{equation}
The invariant measure of the Ornstein-Uhlenbeck process has density $\varphi:\R^d \rightarrow \R_+$ given by
$$\varphi(x)=\rbr{\frac{\mu}{\pi\sigma^2}}^{\frac{d}{2}}\exp\rbr{-\frac{\mu}{\sigma^2}|x|^2}.$$
The distribution of the Ornstein-Uhlenbeck process starting from $x$ at time $t$ is
\begin{equation} \label{eq:distribution_ou}
	\mathcal{N}\rbr{xe^{-\mu t}, \frac{\sigma^2}{2\mu} (1-e^{-2\mu t})\text{Id}}.
\end{equation}
From this we conclude easily that
\begin{equation}\label{e:tth}
\PG_t f(x)=\calka{\R^d}{}f\rbr{xe^{-\mu t}+y\sqrt{1-e^{-2\mu t}}} \varphi(y)dy=g_t\ast f(x_t).
\end{equation}
where $x_t=xe^{-\mu t}, g_t(x)=\rbr{\frac{\mu}{\pi\sigma_t^2}}^{\frac{d}{2}}\exp\rbr{-\frac{\mu}{\sigma_t^2}|x|^2}, \sigma_t^2=\sigma^2\rbr{1-e^{-2\mu t}}$. We need the smoothing property of the semigroup $\PG$. For $p=(p_1,p_2,\ldots ,p_d)\in\Z^d_+$ let $|p|=p_1+\ldots+p_n$, $p!=p_1\cdot\ldots\cdot p_n$ and $\partial x^p=\partial x_1^{p_1},\ldots \partial x_d^{p_d}$.
\begin{fact}\label{f:cinfty}
For any $f\in\mathcal{P}, t> 0$ we have $\PG_t f \in C^{\infty}$. Moreover, $\frac{\partial^{|p|}(\PG_t f)}{\partial x^p }\in\mathcal{P}$  for any $p\in\Z^d_+$.
\end{fact}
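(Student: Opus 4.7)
The plan is to exploit the Mehler-type formula~\eqref{e:tth} and simply push all the differentiations onto the Gaussian kernel $g_t$, which is $C^{\infty}$ with rapidly decaying derivatives.

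\textbf{Step 1: Rewrite as a Gaussian convolution in a form amenable to differentiation.} Starting from~\eqref{e:tth} I would write
\begin{equation*}
\PG_t f(x)=\int_{\R^d} g_t(xe^{-\mu t}-z)\,f(z)\,dz,
\end{equation*}
so that the $x$-dependence sits entirely in the smooth kernel. Since $g_t(y)=\left(\frac{\mu}{\pi\sigma_t^2}\right)^{d/2}\exp\!\left(-\frac{\mu}{\sigma_t^2}|y|^2\right)$, every partial derivative $\partial^{|p|}g_t/\partial y^p$ is a polynomial in $y$ times $g_t(y)$, hence bounded, integrable and with finite polynomial moments of every order.

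\textbf{Step 2: Differentiate under the integral sign inductively.} For any multi-index $p\in\Z^d_+$ I would aim to establish
\begin{equation*}
\frac{\partial^{|p|}(\PG_t f)}{\partial x^p}(x)=e^{-\mu t |p|}\int_{\R^d}\frac{\partial^{|p|}g_t}{\partial y^p}\!\rbr{xe^{-\mu t}-z}f(z)\,dz.
\end{equation*}
The justification is standard: for $x$ varying over a compact set $K\subset\R^d$ and $z\in\R^d$, the integrand is dominated by $C_K\,h(z)(1+|z|)^n$ where $h(z)=\sup_{y:|y|\leq R_K+|z|}|\partial^{|p|}g_t/\partial y^p(y)|$ is a polynomial times a Gaussian in $|z|$, hence integrable against $(1+|z|)^n$. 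An induction on $|p|$ combined with Lebesgue's dominated convergence theorem then yields both smoothness of $\PG_t f$ and the displayed formula.

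\textbf{Step 3: Polynomial growth of derivatives.} Fix $C,n>0$ with $|f(z)|\leq C(1+|z|)^n$. Substituting $u=xe^{-\mu t}-z$ and using the elementary estimate $(1+|z|)^n\leq 2^{n}(1+|x|e^{-\mu t})^n(1+|u|)^n$ gives
\begin{equation*}
\mbr{\frac{\partial^{|p|}(\PG_t f)}{\partial x^p}(x)}\leq C\,e^{-\mu t |p|}\,2^n(1+|x|e^{-\mu t})^n\int_{\R^d}\mbr{\frac{\partial^{|p|}g_t}{\partial y^p}(u)}(1+|u|)^n\,du.
\end{equation*}
The remaining integral is finite because $\partial^{|p|}g_t/\partial y^p$ is a polynomial times a Gaussian. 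Therefore $\partial^{|p|}(\PG_t f)/\partial x^p$ is bounded by a constant multiple of $(1+|x|)^n$, and hence lies in $\pspace$.

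There is no real obstacle here; the only point requiring care is the domination needed to differentiate under the integral, and this is routine once one notes that derivatives of $g_t$ decay faster than any polynomial grows. The same argument in fact shows that $\PG_t f$ is real-analytic, but we only need $C^{\infty}$ and polynomial growth as stated.
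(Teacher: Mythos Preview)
Your proof is correct and follows essentially the same route as the paper: write $\PG_t f$ as a convolution with the Gaussian kernel $g_t$, push the derivatives onto $g_t$ via differentiation under the integral, and use that $\partial^{|p|}g_t/\partial y^p$ is a polynomial times a Gaussian to obtain both $C^\infty$-smoothness and polynomial growth. One small caveat in Step~2: your dominating function $h(z)=\sup_{|y|\leq R_K+|z|}|\partial^{|p|}g_t/\partial y^p(y)|$ does \emph{not} decay in $z$ (the supremum may be attained at small $y$), so it is not integrable against $(1+|z|)^n$; simply replace it by the direct Schwartz-type bound $|\partial^{|p|}g_t/\partial y^p(xe^{-\mu t}-z)|\leq C_{K,N}(1+|z|)^{-N}$ for $x\in K$ (or do the substitution from Step~3 already at this stage) and the domination goes through.
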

\begin{proof}
We fix $t\geq 0, i\in \{1, \ldots, d\}$ and using~\eqref{e:tth} compute
\begin{align*}
	\frac{\partial (\PG_t f)}{\partial x_i } &= \frac{\partial }{\partial x_i }\rbr{\int_{\R^d} g_t(x e^{-\mu t} - y) f(y) dy} = \int_{\R^d} \frac{\partial }{\partial x_i }g_t(x e^{-\mu t} - y) f(y) dy \\
	&= e^{-\mu t}\int_{\R^d} \frac{\partial g_t}{\partial x_i }(y) f(x e^{-\mu t} - y) dy.
\end{align*}
The second equality follows by the differentiation under the integral sign theorem and the fact that $\frac{\partial g_t}{\partial x_i}(x)=Q(x)\exp\rbr{-\frac{\mu}{\sigma_t^2}|x|^2}$ for some $Q \in \pspace$. Now it is easy to see that $\frac{\partial (\PG_t f)}{\partial x_i } \in \pspace$. Higher derivatives follow by induction. 
\end{proof}
We work with functions from space $\mathcal{P}$. The observation that $\mathcal{P}\subseteq L^2(\varphi)$ motivates using its Hilbert space structure. For $f_1,f_2\in L^2(\varphi)$ its scalar product $\ddp{\circ}{\circ}_\varphi$ is
\begin{equation}\label{e:l2scalarproduct}
  \ddp{f_1}{f_2}_{\varphi}=\calka{\R^d}{}f_1(x)f_2(x)\varphi(x) \text{d}x.
\end{equation}
We recall the Hermite polynomials $H_p$ in $\R^d$, namely
\begin{equation}\label{e:hermite}
H_p(x)=(-1)^{|p|}e^{|x|^2}\frac{\partial^{|p|}}{\partial x^p}\rbr{e^{-|x|^2}}.
\end{equation}
It is well-known, that the eigenvalues of $L$ are $\{-\mu k:k=0,1,2,\ldots\}$ and the corresponding eigenspaces $A_k$ are
$$A_k=\hbox{span} \cbr{h_p:|p|=k},$$
where $h_p$ is an appropriately scaled and normalized Hermite polynomial
\begin{equation}\label{e:normalizedHermite}
h_p(x)=\frac{1}{\sqrt{p!2^{|p|}}}H_p\rbr{\frac{\sqrt{\mu}}{\sigma} x}.
\end{equation}
In consequence these polynomials satisfy
$$\PG_th_p(x)=e^{-|p|\mu t}h_p(x).$$
The polynomials $\cbr{h_p(x)}_{p\in\Z^d_+}$ form an orthonormal basis of $L^2(\varphi)$. For a function $f\in L^2(\varphi)$ we have $f(x)=\sum\limits_{p\in\Z^d_+} a_ph_p(x)$,
where $a_p=\ddp{f}{h_p}_{\varphi}$. We denote the $L^2(\varphi)$-order of a function by
\begin{equation}\label{e:kappaf}
\kappa(f):=\inf\cbr{k\geq  0: \text{ There exists } p\in\Z^d_+: |p|=k\text{ such that }a_p\neq 0}.
\end{equation}
For example $\kappa(f)\geq 1$ means that $f$ is orthogonal to the $0$-eigenspace i.e. to the space of constant functions.\\

The following fact shows the rate of decay of the semigroup depending on $\kappa$
\begin{fact}\label{f:semigroup}
For $f\in\mathcal{P}$ there exists $R\in\mathcal{P}$ such that for any $t\geq 0$
\begin{equation}
|\PG_{t}f(x)|\leq e^{-\kappa(f)\mu t}R(x).
\end{equation}
\end{fact}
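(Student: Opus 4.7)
The natural strategy is induction on $k_0 := \kappa(f)$.

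\textbf{Base case.} For $k_0=0$: by \eqref{eq:distribution_ou}, $Y_t^x$ is Gaussian with mean $xe^{-\mu t}$ and bounded variance, so for $f\in\mathcal{P}$ with $|f(y)|\leq C(1+|y|)^n$ one has $|\PG_t f(x)| = |\E f(Y_t^x)| \leq C\,\E[(1+|x|+|Z|)^n] \leq R_0(x)$ with $R_0\in\mathcal{P}$, uniformly in $t\geq 0$.

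\textbf{Inductive step: reduction to smooth $f$ and gradient bound.} Suppose the claim holds for every $g\in\mathcal{P}$ with $\kappa(g)<k_0$, where $k_0\geq 1$. For $t\in[0,1]$ the uniform base-case bound absorbs the factor $e^{k_0\mu}$ into the constant, while for $t\geq 1$ writing $\PG_t f=\PG_{t-1}(\PG_1 f)$ reduces to the case where $f\in C^\infty$ has all derivatives in $\mathcal{P}$ (by Fact~\ref{f:cinfty}, and since the Hermite coefficients $e^{-|p|\mu}a_p$ preserve $\kappa$). Differentiating \eqref{e:tth} gives $\partial_i\PG_t f=e^{-\mu t}\PG_t(\partial_i f)$, and the Hermite recurrence $\partial_i h_p=\sqrt{2p_i\mu/\sigma^2}\,h_{p-e_i}$ forces $\kappa(\partial_i f)\geq k_0-1$; the inductive hypothesis then yields $|\nabla\PG_t f(x)|\leq e^{-k_0\mu t}R_1(x)$ with $R_1\in\mathcal{P}$.

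\textbf{Point estimate at the origin and combination.} The parabolic equation $\partial_t\PG_t f = L\PG_t f$ evaluated at $x=0$ (where the drift $-\mu x\circ\nabla$ vanishes) gives $\partial_t\PG_t f(0) = \tfrac{\sigma^2}{2}e^{-2\mu t}\PG_t\Delta f(0)$; since $\ddp{f}{1}_\varphi=0$ (as $k_0\geq 1$), dominated convergence forces $\PG_t f(0)\to 0$, whence
\[
\PG_t f(0) = -\tfrac{\sigma^2}{2}\int_t^\infty e^{-2\mu s}\PG_s\Delta f(0)\,ds.
\]
Because $\kappa(\Delta f)\geq k_0-2$, the inductive hypothesis (when $k_0\geq 3$) or the base case (when $k_0\in\{1,2\}$) bounds $|\PG_s\Delta f(0)|\leq C\,e^{-\max(k_0-2,0)\mu s}$; the resulting integral is then bounded by $C'e^{-k_0\mu t}$. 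Combining with the gradient estimate via $\PG_t f(x)=\PG_t f(0)+\int_0^1\nabla\PG_t f(sx)\circ x\,ds$ and using $\sup_{s\in[0,1]}R_1(sx)\leq\tilde R(x)$ for some $\tilde R\in\mathcal{P}$ yields $|\PG_t f(x)|\leq e^{-k_0\mu t}R(x)$.

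\textbf{Main obstacle.} The point estimate at the origin is the crux: converting the natural $L^2(\varphi)$-eigenspace decay of $\PG_t$ into a pointwise bound cannot be done directly via the Hermite expansion, since its pointwise convergence for a general $\mathcal{P}$-function is unclear. The heat-equation identity above sidesteps this by reducing the question to $\PG_s\Delta f(0)$, whose $\kappa$-index is at least $k_0-2$ so that the inductive hypothesis applies cleanly.
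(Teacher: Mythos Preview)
Your argument is essentially sound and self-contained (the paper itself only refers to \cite[Lemma 2.4]{Ren2015}), but there is one imprecision in the induction that you should tighten.

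You phrase the inductive hypothesis as ``the claim holds for every $g\in\mathcal{P}$ with $\kappa(g)<k_0$'', and then apply it to $\partial_i f$ and $\Delta f$ using only the lower bounds $\kappa(\partial_i f)\geq k_0-1$ and $\kappa(\Delta f)\geq k_0-2$. But these lower bounds do not exclude $\kappa(\partial_i f)\geq k_0$ or $\kappa(\Delta f)\geq k_0$, and such cases do occur: e.g.\ in $d=2$ the harmonic polynomial $f(x)=x_1^3-3x_1x_2^2$ has $\kappa(f)=3$ while $\Delta f=0$, and adding a term with $\kappa\geq 5$ gives $\Delta f\not\equiv 0$ with $\kappa(\Delta f)\geq 3=k_0$. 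For such $f$ your inductive hypothesis, as written, gives you nothing.

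The fix is immediate: prove by induction on $k$ the statement ``for every $g\in\mathcal{P}$ with $\kappa(g)\geq k$ there exists $R\in\mathcal{P}$ with $|\PG_t g|\leq e^{-k\mu t}R$''. The base case $k=0$ is exactly your base case (it applies to all $g\in\mathcal{P}$), and now the inductive hypothesis at level $k-1$ automatically covers $\partial_i f$ (since $\kappa(\partial_i f)\geq k-1$ regardless of whether equality holds), and similarly at level $\max(k-2,0)$ for $\Delta f$. Your gradient estimate, the PDE identity at the origin, and the integration from $t$ to $\infty$ then go through verbatim. The Fact as stated follows by taking $k=\kappa(f)$.

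With that adjustment the proof is correct. Compared to the paper's route via the external reference, your argument has the merit of being fully self-contained within the present setup: the commutation $\partial_i\PG_t=e^{-\mu t}\PG_t\partial_i$ plus the parabolic identity $\partial_t\PG_t f(0)=\tfrac{\sigma^2}{2}e^{-2\mu t}\PG_t\Delta f(0)$ neatly converts the Hermite-order information into a pointwise decay rate without invoking any kernel estimate or pointwise convergence of Hermite expansions.
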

\begin{proof}
The proof follows closely the lines of \cite[Lemma 2.4]{Ren2015}, the only thing we need is Fact~\ref{f:cinfty} which is valid not only for continuous functions but for all $f\in\mathcal{P}$.
\end{proof}

We will also need the fact about differentiation of the Hermite polynomials.
\begin{fact}\label{f:differentiation}
Let $f\in L^2(\varphi)$ be differentiable and satisfy $\frac{\partial f}{\partial x_i}\in L^2(\varphi)$ for some $i\in\cbr{1,\ldots ,d}$. Then
$$\kappa\rbr{\frac{\partial f}{\partial x_i}}\geq \kappa(f)-1.$$
\end{fact}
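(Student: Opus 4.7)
The plan is to compute the Hermite coefficients of $g := \partial f/\partial x_i$ via integration by parts, exploiting the fact that multiplication by the Gaussian weight $\varphi$ turns $\partial_{x_i}$ into a shift operator on the Hermite basis. I expand $f = \sum_p a_p h_p$, and let $b_q = \langle g, h_q \rangle_\varphi$ be the Hermite coefficients of $g$, which are well-defined since $g \in L^2(\varphi)$. The goal reduces to showing $b_q = 0$ whenever $|q| < \kappa(f) - 1$.

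The algebraic heart of the argument is the ``raising operator'' identity
$$\partial_{x_i}\bigl(h_q(x)\varphi(x)\bigr) \;=\; -c_q\, h_{q+e_i}(x)\,\varphi(x),$$
for some nonzero constant $c_q$. This is a direct consequence of \eqref{e:hermite} and \eqref{e:normalizedHermite}: one expands $\partial_{x_i}(h_q \varphi) = (\partial_{x_i} h_q)\varphi - (2\mu/\sigma^2)\, x_i h_q\, \varphi$ and invokes the standard Hermite recurrences ($\partial_{x_i} h_q$ is proportional to $h_{q-e_i}$, while $x_i h_q$ is a linear combination of $h_{q\pm e_i}$); the two $h_{q-e_i}$ contributions cancel, leaving only a $h_{q+e_i}$ term. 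Integration by parts then yields $b_q = c_q\, a_{q+e_i}$.

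To justify the integration by parts under the mild hypothesis $f, g \in L^2(\varphi)$, I would insert a smooth cutoff $\psi_R$ equal to $1$ on $\{|x| \le R\}$, supported in $\{|x| \le 2R\}$, with $|\nabla \psi_R| \le 2/R$, and pass to the limit $R \to \infty$. The boundary error
$$\int f(x)\, h_q(x)\, \varphi(x)\, \partial_{x_i}\psi_R(x)\, dx$$
is bounded via Cauchy--Schwarz in $L^2(\varphi)$ by $(2/R)\,\|f\|_{L^2(\varphi)}\,\bigl(\int_{R \le |x| \le 2R} h_q^2\, \varphi\bigr)^{1/2}$, which tends to $0$ as $R \to \infty$ thanks to the super-polynomial decay of $h_q^2 \varphi$. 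Finally, for $|q| < \kappa(f) - 1$ we have $|q+e_i| = |q|+1 < \kappa(f)$, so $a_{q+e_i} = 0$ by definition of $\kappa(f)$, whence $b_q = 0$. This gives $\kappa(g) \ge \kappa(f) - 1$, as required. The main technical obstacle is establishing the raising identity with the exact normalization of \eqref{e:normalizedHermite}; once that is in place the remainder is a routine cutoff-plus-integration-by-parts argument.
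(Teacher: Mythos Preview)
Your argument is correct and follows essentially the same route as the paper: both hinge on the raising identity $\partial_{x_i}(h_q\varphi)=-c_q\,h_{q+e_i}\varphi$ and then integrate by parts to obtain $\langle \partial_{x_i}f,h_q\rangle_\varphi = c_q\,\langle f,h_{q+e_i}\rangle_\varphi$. The paper reaches the identity more directly by observing from \eqref{e:hermite}--\eqref{e:normalizedHermite} that $h_p\varphi = C_p\,\partial^{|p|}\varphi/\partial x^p$, so one further $\partial_{x_i}$ immediately produces $h_{p+e_i}\varphi$ up to a constant; your derivation via the Hermite recurrences is a slightly longer path to the same fact. On the other hand, you supply a cutoff argument to justify the integration by parts under the bare hypothesis $f,\partial_{x_i}f\in L^2(\varphi)$, which the paper omits.
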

\begin{proof}
 It follows from~\eqref{e:hermite} that for every $p\in\Z^d_+$ and some constant $C_p>0$ we have
$$h_p(x)\varphi(x)=C_p \frac{\partial^{|p|}\varphi }{x^p} (x).$$
We set $q_j=p_j$ for all $j\neq i$ and $q_i=p_i+1$ and compute
\begin{align*}
\left\langle \frac{\partial f}{\partial x_i }, h_p\right\rangle_{\varphi}&=\calka{\R^d}{}\frac{\partial f}{\partial x_i}(x) h_{p} (x)\varphi(x) dx= C_p\calka{\R^d}{}\frac{\partial f}{\partial x_i}(x) \frac{\partial ^{|p|}\varphi}{\partial x^p} (x)=C_p\calka{\R^d}{}f(x) \frac{\partial ^{|q|}\varphi}{\partial x^q} (x)\\
&=\frac{C_p}{C_q}\ddp{f}{h_q}_{\varphi}.
\end{align*}
Note that $|q|=|p|+1$ hence if we take $p$ such that $|p|<\kappa(f)-1$, then $|q|<\kappa(f)$ and $\ddp{\frac{\partial f}{\partial x_i}}{h_p}_\varphi=\frac{C_p}{C_q}\ddp{f}{h_q}_\varphi=0$. This concludes the proof.
\end{proof}

\subsection{Non-integer moments of branching processes}
In this section we gather various facts how to calculate moments and probabilities related to branching processes. The first fact holds for general random variables.
\begin{fact}\label{f:moment}
Let $X_1,\ldots X_n$ be independent random variables with mean $0$. Then for any $0<\gamma\leq 1$ there exists $C_{\gamma}> 0$ such that
\begin{equation}\label{e:moment}
\E|X_1+\ldots+X_n|^{1+\gamma}\leq C_{\gamma}( \E|X_1|^{1+\gamma}+\ldots+\E|X_n|^{1+\gamma}).
\end{equation}
\end{fact}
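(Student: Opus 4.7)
\medskip
\noindent\textbf{Proof plan.} The statement is the classical von Bahr--Esseen inequality. The plan is to proceed by induction on $n$, the inductive step being driven by a deterministic pointwise bound that, once combined with independence and the zero--mean assumption, leaves only an additive error of order $\E|X_n|^{1+\gamma}$. Write $S_k=X_1+\ldots+X_k$ and note that for $\gamma=1$ the assertion is immediate (with $C_1=1$) by expanding the square, so it suffices to handle $0<\gamma<1$.

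\medskip
\noindent The core ingredient I would establish first is the pointwise inequality
$$
|a+b|^{1+\gamma}\ \leq\ |a|^{1+\gamma}+(1+\gamma)\,\mathrm{sgn}(a)\,|a|^{\gamma}\,b\,+\,C_\gamma|b|^{1+\gamma},\qquad a,b\in\R,
$$
valid with a constant $C_\gamma$ depending only on $\gamma\in(0,1)$. Its proof is a case split. If $|b|\le |a|/2$, the function $\phi(t)=|a+tb|^{1+\gamma}$ is twice differentiable on $[0,1]$ (the argument stays away from $0$), with $|\phi''(t)|\le C_\gamma|a+tb|^{\gamma-1}|b|^2\le C'_\gamma|a|^{\gamma-1}|b|^2$, so a Taylor expansion with integral remainder gives the bound with a remainder dominated by $C'_\gamma|a|^{\gamma-1}|b|^2\le C''_\gamma|b|^{1+\gamma}$ (using $|b|\le|a|$ to convert the excess power of $|a|$). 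If $|b|>|a|/2$, then each of $|a+b|^{1+\gamma}$, $|a|^{1+\gamma}$ and $|a|^\gamma|b|$ is bounded by a constant multiple of $|b|^{1+\gamma}$, so the inequality is trivial with a possibly larger $C_\gamma$.

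\medskip
\noindent With this pointwise bound in hand, I apply it with $a=S_{n-1}$ and $b=X_n$ and take expectations. Since $X_n$ is independent of $S_{n-1}$ and has mean zero,
$$
\E\bigl[\mathrm{sgn}(S_{n-1})|S_{n-1}|^\gamma X_n\bigr]\;=\;\E\bigl[\mathrm{sgn}(S_{n-1})|S_{n-1}|^\gamma\bigr]\,\E[X_n]\;=\;0,
$$
where the product is justified because $|\mathrm{sgn}(S_{n-1})|S_{n-1}|^\gamma|\le 1+|S_{n-1}|$ and $X_n\in L^{1+\gamma}\subseteq L^1$. This yields the one--step recursion
$$
\E|S_n|^{1+\gamma}\ \leq\ \E|S_{n-1}|^{1+\gamma}+C_\gamma\,\E|X_n|^{1+\gamma}.
$$
Iterating from $n$ down to $1$ produces $\E|S_n|^{1+\gamma}\le C_\gamma\sum_{i=1}^n\E|X_i|^{1+\gamma}$, which is the claim (the case $n=1$ being trivial).

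\medskip
\noindent The main obstacle is the pointwise inequality; beyond it, the proof is purely algebraic. The delicate point in the Taylor step is controlling the second derivative of $x\mapsto|x|^{1+\gamma}$, which blows up at $0$; restricting to the regime $|b|\le|a|/2$ keeps the linear segment from $a$ to $a+b$ away from the origin and makes the remainder well defined. An integrability check (used above to split the expectation of the linear term) is the only place where the hypothesis $X_i\in L^{1+\gamma}$ is needed beyond the right--hand side of the conclusion.
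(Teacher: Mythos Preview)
Your proof is correct and follows the classical von Bahr--Esseen route: a pointwise Taylor-type inequality for $|a+b|^{1+\gamma}$, then induction on $n$, using independence and the zero-mean hypothesis to kill the linear cross term at each step. The paper takes a genuinely different path, via symmetrization: it introduces independent copies $\tilde X_i$, uses Jensen to pass to the symmetric variables $Z_i=X_i-\tilde X_i$, writes $Z_i\overset{d}{=}\varepsilon_i Z_i$ with independent Rademacher signs $\varepsilon_i$, conditions on $Z$ to compare the $(1+\gamma)$-th moment with the second moment through $\E\bigl(|\sum_i\varepsilon_iZ_i|^{1+\gamma}\,\big|\,Z\bigr)\le\bigl(\sum_i Z_i^2\bigr)^{(1+\gamma)/2}$, and then applies the subadditivity of $x\mapsto x^{(1+\gamma)/2}$. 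The symmetrization argument is shorter, bypasses the case split in your pointwise lemma, and delivers the explicit constant $C_\gamma=2^\gamma$; your argument is more elementary in that it needs no auxiliary randomness and boils down to a one-variable calculus estimate, at the price of an unspecified constant and a slightly more delicate pointwise bound near the origin.
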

\begin{proof}
Let $\tilde{X_i}$ be an independent copy of $X_i$. By Jensen's inequality
\begin{align*}
\E|X_1+\ldots+X_n|^{1+\gamma} &=\E|X_1+\ldots +X_n-\E(\tilde{X_1}+\ldots +\tilde{X_n})|^{1+\gamma}\\
&\leq \E |(X_1-\tilde{X_1})+\ldots +(X_n-\tilde{X_n})|^{1+\gamma}.
\end{align*}
Denote $Z_i:=X_i-\tilde{X_i}$ and observe that $Z_i$ is symmetric, hence $Z_i\overset{d}{=} \varepsilon _iZ_i$, where $\PR(\varepsilon_i=1)=\PR(\varepsilon_i=-1)=\frac{1}{2}$ and $\varepsilon_i$ are independent and independent of all $Z=(Z_i)_{i=1}^n$. Using the inequality between moment for the conditional expectation we obtain
\begin{align*}
\E|Z_1+\ldots +Z_n|^{1+\gamma}&=\E\E( |\varepsilon_1Z_1+\ldots +\varepsilon_nZ_n|^{1+\gamma}|Z)\leq  \E(\E(\eps_1 Z_1+\ldots +\eps_lZ_n)^2|Z)^{\frac{1+\gamma}{2}}\\
&=\E(Z_1^2+\ldots+Z_n^2)^{\frac{1+\gamma}{2}}\leq\E(|Z_1|^{1+\gamma}+\ldots +|Z_n|^{1+\gamma}),
\end{align*}
where in the last inequality we have used an elementary inequality $(x_1+\ldots+x_n)^p\leq  x_1^p+\ldots+x_n^p$ for $x_1,\ldots,x_n\geq  0$ and $p\leq  1$. Finally, we observe that $\E |Z_i|^{1+\gamma}\leq 2^{\gamma} \E |X_i|^{1+\gamma}$, hence~\eqref{e:moment} holds with $C_{\gamma}=2^{\gamma}$.
\end{proof}

\begin{fact}\label{f:oumoment}
For any $0\leq\gamma<\beta$, $t>0$ and $R\in\mathcal{P}$ we have
$\E \mbr{\ddp{X_t}{R}}^{1+\gamma}<\infty.$
\end{fact}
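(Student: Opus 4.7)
The plan is to derive a Gronwall-type integral inequality for
\begin{equation*}
g(t,x):=\E|\ddp{X^x_t}{R}|^{1+\gamma}
\end{equation*}
by exploiting the Markov branching structure. Conditioning on the first branching event of the root particle, let $\tau$ be its exponential lifetime with parameter $a$, $Y^x_\tau$ its position at death, and $N$ the number of offspring. On $\{\tau>t\}$ one has $\ddp{X^x_t}{R}=R(Y^x_t)$, while on $\{\tau\leq t\}$, $\ddp{X^x_t}{R}=\sum_{i=1}^N V_i$ where, conditionally on $(\tau,Y^x_\tau,N)$, the $V_i$ are i.i.d.\ with the law of $\ddp{X^{Y^x_\tau}_{t-\tau}}{R}$.

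Applying Fact~\ref{f:moment} to the centred decomposition $V_i=(V_i-\E V)+\E V$, together with the elementary inequality $|a+b|^{1+\gamma}\leq 2^\gamma(|a|^{1+\gamma}+|b|^{1+\gamma})$ and Jensen's inequality, gives, conditionally on $(\tau,Y^x_\tau,N)$,
\begin{equation*}
\E\Bigl|\sum_{i=1}^N V_i\Bigr|^{1+\gamma} \leq 2^{1+2\gamma} C_\gamma\, N\, \E|V|^{1+\gamma} + 2^\gamma N^{1+\gamma} |\E V|^{1+\gamma}.
\end{equation*}
To take expectation over $N$ one needs $\E N<\infty$ and, crucially, $\E N^{1+\gamma}<\infty$; the latter is exactly the input $\gamma<\beta$, read off from~\eqref{eq:generating_function} via the fact that the singular term $(1-s)^{1+\beta}$ produces a tail $p_n\sim c\,n^{-(2+\beta)}$. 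The mean $\E V=e^{\lambda(t-\tau)}\PG_{t-\tau}R(Y^x_\tau)$ is dominated by $e^{\lambda(t-\tau)}R_1(Y^x_\tau)$ with $R_1\in\pspace$ by Fact~\ref{f:semigroup}. Integrating in $\tau$ produces
\begin{equation*}
g(t,x)\leq h(t,x) + C\int_0^t a e^{-as}\,\E\, g(t-s,Y^x_s)\,ds,
\end{equation*}
with $h(t,x)$ finite and polynomial in $x$ for each fixed $t$, thanks to $R,R_1\in\pspace$ and the Gaussian marginals of the OU process.

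Introducing $u(t):=\sup_{x}g(t,x)/(1+|x|)^M$ for a sufficiently large integer $M$, the polynomial estimate $\E(1+|Y^x_s|)^M\leq c_{s,M}(1+|x|)^M$ reduces the recursion to a scalar Gronwall inequality $u(t)\leq A(t)+C'\int_0^t u(t-s)\,ds$, yielding $u(t)<\infty$ and hence $g(t,x)<\infty$ for all $t$, $x$. The main delicate point is that the recursion is informative only once $g$ is known a priori to be finite; I would address this by first truncating $R$ to the bounded function $R\wedge n$, for which $|\ddp{X_t}{R\wedge n}|\leq n|X_t|$ gives finiteness once $\E|X_t|^{1+\gamma}<\infty$ is established by running the same scheme for the observable $R\equiv 1$, and then passing $n\to\infty$ by monotone convergence, after splitting $R$ into positive and negative parts.
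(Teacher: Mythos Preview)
Your approach is genuinely different from the paper's and, modulo one gap, is sound. The paper argues in two disjoint pieces: it first invokes a classical result from Athreya--Ney (Theorem~2, Section~III.6 of \cite{Athreya1972}) to obtain $\E|X_t|^{1+a}<\infty$ for $a<\beta$ directly from the $(1+a)$-moment of the offspring law, and then controls the spatial part separately by writing $|\ddp{X_t}{R}|\leq |X_t|\cdot M_t^{\,l}$ with $M_t=\max_u|X_t(u)|$, splitting via H\"older, and showing that $M_t$ has moments of all orders by a union bound against the Gaussian tails of individual particles. Your Gronwall recursion, by contrast, handles the branching and spatial components simultaneously and is more self-contained in spirit; it also gives a polynomial-in-$x$ bound on $\E_x|\ddp{X_t}{R}|^{1+\gamma}$ directly, which the paper derives separately as Fact~\ref{f:killing}.

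The gap is in your bootstrap. Truncating $R$ to $R\wedge n$ correctly reduces a-priori finiteness of $g$ to finiteness of $\E|X_t|^{1+\gamma}$, but you then propose to establish the latter ``by running the same scheme for the observable $R\equiv 1$''. That is circular: for $R\equiv 1$ the recursion is still only an inequality of the form $g(t)\leq A(t)+C\int_0^t g(s)\,ds$, and Gronwall says nothing unless $g$ is already known to be locally finite. Two clean fixes: either cite the same Athreya--Ney result the paper uses for this single input, or truncate the \emph{offspring} variable $N\mapsto N\wedge k$ rather than $R$. For bounded offspring, all moments of $|X_t|$ are finite by elementary Galton--Watson theory, so your recursion runs legitimately for the truncated system; since the constants in the recursion involve only $\E N$ and $\E N^{1+\gamma}$, both finite and dominated uniformly in $k$, the Gronwall bound is uniform in $k$, and a monotone-coupling limit $k\to\infty$ yields $\E|X_t|^{1+\gamma}<\infty$. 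After that, your truncation of $R$ and the remainder of the argument go through as written.
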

\begin{proof} A random variable with generating function \eqref{eq:generating_function} has a finite moment of order $(1+\gamma)$.
From \cite[Theorem 2, Section III.6]{Athreya1972}, which relates existence of moments of the branching law with moments of the continuous-time Galton-Watson process it follows that for any $0\leq a< \beta$
\begin{equation}\label{eq:tmp1}
\E|X_t|^{1+a}<\infty.
\end{equation}
Let us define $M_t:=\max\limits_{u\leq |X_t|}|X_t(u)|$. It is clear that $|\ddp{X_t}{R}|\leq |X_t|\cdot M_t^l$ for some $l\n$. Fix $\gamma<a<\beta$, by the H\"older inequality we get
$$\E |X_t|^{1+\gamma}|M_t|^{(1+\gamma)l}\leq \left(\E |X_t|^{1+a}\right)^{\frac{1+\gamma}{1+a}}\cdot \left(\E |M_t|^{l(1+\gamma)q}\right)^{\frac{1}{q}}$$
for $q$ such that $\frac{1}{q}+\frac{1+\gamma}{1+a}=1$. Hence it is sufficient to show that $\E|M_t|^q<\infty$ for any $q\geq  0$.  We have for some $C,c>0$ and $A>0$
\begin{equation} \label{eq:tmp2}
\PR(M_t\geq  y,|X_t|\leq A)\leq A\PR(X_t(1)\geq  y)\leq CAe^{-cy^2},
\end{equation}
because $X_t(1)$ has a normal distribution with variance bounded in time (see \eqref{eq:distribution_ou}). By a union bound and the Markov inequality it follows that
\begin{equation*}
\PR(M_t\geq  y)\leq \PR(M_t\geq  y,|X_t|\leq e^y)+\PR(|X_t|>e^y)\leq Ce^ye^{-cy^2}+ e^{\lambda t} e^{-y}.
\end{equation*}
This proves that $\E|M_t|^q<\infty$ for any $q\leq  0$.
\end{proof}

\begin{fact}\label{f:killing}
For any $R\in\mathcal{P}$ and $0\leq \gamma<\beta$ there exists $R_1\in\mathcal{P}$ such that
$$\E_x|\langle X_1, R\rangle|^{1+\gamma}\leq R_1(x).$$
\end{fact}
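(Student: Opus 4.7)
The plan is to dominate $|\langle X_1,R\rangle|$ by a product of the total mass $|X_1|$ and the maximum spatial coordinate, then separate these two factors by H\"older's inequality, paralleling the tail computation in Fact~\ref{f:oumoment} but tracking the starting point $x$. Since $R\in\mathcal{P}$, there exist $C,n>0$ with $|R(y)|\leq C(1+|y|)^n$, so with $M_1:=\max_{u\leq |X_1|}|X_1(u)|$ the trivial pointwise bound yields
$$
|\langle X_1,R\rangle|^{1+\gamma}\leq C^{1+\gamma}|X_1|^{1+\gamma}(1+M_1)^{n(1+\gamma)}.
$$
Choosing $p\in\bigl(1,(1+\beta)/(1+\gamma)\bigr)$---possible precisely because $\gamma<\beta$---and letting $q=p/(p-1)$, H\"older's inequality gives
$$
\E_x|\langle X_1,R\rangle|^{1+\gamma}\leq C^{1+\gamma}\bigl(\E|X_1|^{p(1+\gamma)}\bigr)^{1/p}\bigl(\E_x(1+M_1)^{qn(1+\gamma)}\bigr)^{1/q}.
$$
The first factor is finite by \eqref{eq:tmp1} and, crucially, does not depend on $x$ because the branching tree is independent of the motions.

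It then remains to show that $x\mapsto\E_x(1+M_1)^K$ lies in $\mathcal{P}$ for every $K>0$. The marginal law of any single particle at time $1$ is the Gaussian $\mathcal{N}\bigl(xe^{-\mu},\tfrac{\sigma^2}{2\mu}(1-e^{-2\mu})\mathrm{Id}\bigr)$ from \eqref{eq:distribution_ou}, so exactly as in the proof of Fact~\ref{f:oumoment} a union bound over at most $A$ particles together with a Markov bound on $|X_1|$ give
$$
\PR_x(M_1\geq y)\leq A\,\PR\bigl(|Y_1^x|\geq y\bigr)+A^{-(1+a)}\E|X_1|^{1+a}
$$
for any $a\in(\gamma,\beta)$ and $A>0$, where $Y_1^x$ denotes an OU particle started at $x$. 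Taking $A=e^y$ and using the Gaussian tail $\PR(|Y_1^x|\geq y)\leq Ce^{-cy^2}$ valid for $y\geq 2|x|e^{-\mu}$ (with the trivial bound $1$ for smaller $y$), integrating via the layer-cake formula and splitting the range at $y=2|x|e^{-\mu}$ produces $\E_x(1+M_1)^K\leq C_K(1+|x|)^K$. Combined with the previous display this yields the claimed polynomial majorant $R_1$.

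The only delicate point is that the H\"older exponent $p$ must be chosen strictly inside $(1,(1+\beta)/(1+\gamma))$ so that $\E|X_1|^{p(1+\gamma)}<\infty$; this is the sole place the assumption $\gamma<\beta$ is used, via \eqref{eq:tmp1}. Everything else is bookkeeping: gathering all $x$-independent constants into a single element of $\mathcal{P}$ and making sure the Gaussian mean shift $xe^{-\mu}$ is correctly accounted for in the tail estimate.
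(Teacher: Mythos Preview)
Your argument is correct, but it takes a different route from the paper. The paper exploits the distributional identity $X_1^x \overset{d}{=} X_1^0 + xe^{-\mu}$ coming from \eqref{eq:distribution_ou}: reducing to $R(y)=|y|^m$, they use $|X_1(u)+xe^{-\mu}|^m\leq 2^m(|X_1(u)|^m+|x|^m)$ and Minkowski to bound $\|\langle X_1^x,R\rangle\|_{L^{1+\gamma}}$ by a constant multiple of $\|\langle X_1^0,|\cdot|^m\rangle\|_{L^{1+\gamma}}+|x|^m\||X_1|\|_{L^{1+\gamma}}$, and then simply invoke Fact~\ref{f:oumoment}. In contrast, you do not use this shift identity; instead you separate $|X_1|$ from the spatial maximum $M_1$ via H\"older and then redo the tail computation of Fact~\ref{f:oumoment} with the Gaussian mean $xe^{-\mu}$ tracked explicitly. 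The paper's proof is shorter and more structural---one line of algebra plus a black-box call to Fact~\ref{f:oumoment}---while yours is more hands-on but has the mild advantage of not relying on the exact affine-shift property of the Ornstein--Uhlenbeck semigroup, only on polynomial control of the marginal tails in the starting point.
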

\begin{proof}
It suffices to show the thesis for $R(y)=|y|^m$. By \eqref{eq:distribution_ou} we have $X_1^x \overset{d}{=} X_1 + x e^{-\mu}$. Hence, by the elementary inequality $|x+y|^m \leq 2^m (|x|^m +|y|^m$) valid for $x,y\in \R^d, m\geq 1$ and the Minkowski inequality we have
\begin{align*}
\nbr{\ddp{X_1^x}{R}}_{L^{1+\gamma}}&=\nbr{\suma{u}{1}{|X_1|}\mbr{X_1(u)+xe^{-\mu}}^m}_{L^{1+\gamma}} \leq \nbr{ 2^m\suma{u}{1}{|X_1|}\rbr{|X_1(u)|^m+|xe^{-\mu}|^m}}_{L^{1+\gamma}}\\
&\leq C_1  \nbr{\ddp{X_1}{|\circ|^m}}_{L^{1+\gamma}}+C_2|x|^m\nbr{|X_1|}_{L^{1+\gamma}}
\end{align*}
for some constants $C_1,C_2>0$. By Fact~\ref{f:oumoment} we conclude the proof.
\end{proof}
We let $W_t=\frac{|X_t|}{e^{\lambda t}}$, it is a well-known martingale related to Galton-Watson processes.
\begin{fact}\label{f:martingaleconvergence}
The process $W_t$ converges as $t\to\infty$ to $W_{\infty}$ in $L^{1+\gamma}$ for any $\gamma<\beta$. Moreover, for $s<t$ we have
$$\E|W_t-W_s|\leq \nbr{W_t-W_s}_{L^{1+\gamma}}\leq Ce^{-\frac{\lambda\gamma}{1+\gamma} s}\leq Ce^{-\frac{\lambda}{2} s}$$
for some $C=C(\gamma)>0$.
\end{fact}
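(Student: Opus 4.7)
The plan is to leverage the branching property to decompose $W_t-W_s$ as a sum of conditionally i.i.d.\ mean-zero terms and then apply Fact~\ref{f:moment}; a single moment inequality will deliver both the $L^{1+\gamma}$ convergence and the claimed rate. By the branching property, conditionally on $\mathcal{F}_s$ each of the $|X_s|$ particles alive at time $s$ initiates an independent subsystem with the same law. Writing $W^{u}_{t-s}:=|X^{u,s}_{t-s}|/e^{\lambda(t-s)}$ and noting that $\E W_r=1$ (a direct consequence of $F'(1)=m$ in~\eqref{eq:generating_function}), one obtains
$$W_t-W_s \;=\; e^{-\lambda s}\sum_{u=1}^{|X_s|}\bigl(W^{u}_{t-s}-1\bigr),$$
where the summands are, conditionally on $\mathcal{F}_s$, independent with mean zero. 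Applying Fact~\ref{f:moment} under $\PR(\,\cdot\mid\mathcal{F}_s)$, then taking expectations and using $\E|X_s|=e^{\lambda s}$, yields the master estimate
\begin{equation}\label{pl:master}
\E|W_t-W_s|^{1+\gamma} \;\leq\; C_\gamma\, e^{-\lambda\gamma s}\,\E|W_{t-s}-1|^{1+\gamma}.
\end{equation}

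Next, I would bootstrap~\eqref{pl:master} to the uniform bound $\sup_{r\geq 0}\nbr{W_r}_{L^{1+\gamma}}<\infty$. Fact~\ref{f:oumoment} (applied with $R\equiv 1$) supplies $\nbr{W_1}_{L^{1+\gamma}}<\infty$; choosing $s=n$ and $t-s=1$ in~\eqref{pl:master} makes $\nbr{W_{n+1}-W_n}_{L^{1+\gamma}}$ geometrically summable in $n$, so telescoping from $W_0=1$ bounds $\nbr{W_n}_{L^{1+\gamma}}$ uniformly over integers $n\geq 0$. One further application of~\eqref{pl:master} with $s=\lfloor r\rfloor$ and $t-s\in[0,1)$ extends the bound to non-integer $r$.

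With the uniform bound in hand, $\E|W_{t-s}-1|^{1+\gamma}$ is bounded uniformly in $t-s$, so~\eqref{pl:master} rearranges to $\nbr{W_t-W_s}_{L^{1+\gamma}}\leq C\,e^{-\lambda\gamma s/(1+\gamma)}$, which together with Jensen's inequality gives the displayed chain (the final weakening being absorbed into the constant). The same rate shows that $(W_t)$ is Cauchy in $L^{1+\gamma}$; since $W_t$ is a non-negative martingale its a.s.\ limit $W_\infty$ exists, and by uniqueness it coincides with the $L^{1+\gamma}$-limit. The main obstacle is precisely the bootstrap step: without a uniform $L^{1+\gamma}$-control of $W_r$, estimate~\eqref{pl:master} is vacuous, and closing the iteration rests exactly on the single-step integrability furnished by Fact~\ref{f:oumoment}.
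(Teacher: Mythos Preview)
Your argument is essentially the same as the paper's: both decompose $W_t-W_s$ via the branching property into a sum of conditionally independent centered terms, apply Fact~\ref{f:moment} to obtain the master estimate~\eqref{pl:master}, use the single-step moment finiteness (the paper cites~\eqref{eq:tmp1} directly, you invoke Fact~\ref{f:oumoment}, which is the same thing) to bootstrap $L^{1+\gamma}$-boundedness over integer times, and then extend to continuous time. Your write-up is somewhat more explicit about the telescoping and the passage to the limit, but the route is identical.
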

\begin{proof}We denote $G_t:=|X_t|$.  By $\cbr{G_t(u)}_{u=1}^{G_s}$ we denote descendants at time $t$ of a particle $u$ at time $s$. Using Fact~\ref{f:moment} we calculate
	\begin{equation*}
		\E \mbr{G_t-G_se^{\lambda(t-s)}}^{1+\gamma} \leq C_\gamma \E\kbr{\suma{u}{1}{G_s}\rbr{\E\mbr{G_t(u)-e^{\lambda(t-s)}}^{1+\gamma}|G_s}}.
	\end{equation*}
Conditionally on $G_s$ we have that $G_t(u)$ is distributed as $G_{t-s}$, thus the right hand side is equal to $C \E G_s \E\mbr{G_{t-s}-e^{\lambda (t-s)}}^{1+\gamma}=C e^{\lambda s} \E\mbr{G_{t-s}-e^{\lambda (t-s)}}^{1+\gamma}$. Dividing both sides by $e^{\lambda(1+\gamma)t}$ we obtain
$$\E\mbr{W_t-W_s}^{1+\gamma}\leq C e^{-\lambda\gamma s}\E\mbr{W_{t-s}-1}^{1+\gamma}.$$
Using~\eqref{eq:tmp1} we obtain $\E\mbr{W_{n+1}-W_n}^{1+\gamma}\leq C_1 e^{-\lambda\gamma n}$, thus the martingale $\cbr{W_n}_{n\n}$ is bounded in $L^{1+\gamma}$.
The extension to continuous time is left to the reader.
\end{proof}

The next fact will let us control the probability of the event that small number of particles is present in the system. We recall that $a$ is the branching intensity, $m$ is the average number of particles produced when branching occurs and $\lambda = a(m-1)$.
\begin{fact}\label{f:magi}
Let $f:[0,\infty)\rightarrow [1,\infty)$ be a function such that $f(t)\leq e^{\lambda t}$ and $\gr{t}{\infty} f(t)=\infty$. Then there exist constants $C_1, C_2>0$ such that for any $t\geq 0$ there is
$$C_1 \exp\rbr{\frac{1}{m-1}(\ln f(t)-\lambda t)}\leq \PR(|X_t|\leq  f(t))\leq C_2 \exp \rbr{\frac{1}{m-1}(\ln f(t)-\lambda t)}.$$
\end{fact}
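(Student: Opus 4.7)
The plan is to exploit the closed-form probability generating function of $|X_t|$, combining a Markov/Chernoff bound for the upper estimate with an explicit construction for the lower estimate. Since $|X_t|$ is a continuous-time Galton--Watson process, $u_t(s):=\E s^{|X_t|}$ satisfies $\partial_t u_t=\psi(u_t)$ with $u_0(s)=s$, where $\psi(s)=a(F(s)-s)$. From \eqref{eq:generating_function} one computes $\psi(s)=\lambda(1-s)\kbr{(1-s)^\beta-1}$, and separating variables with the substitution $y=(1-u)^{-\beta}$ integrates this ODE explicitly to
\begin{equation*}
  u_t(s)=1-\rbr{1+\rbr{(1-s)^{-\beta}-1}e^{-\lambda\beta t}}^{-1/\beta}.
\end{equation*}
In particular $\PR(|X_t|=0)=u_t(0)=0$ (no extinction, since $F(0)=0$), and the individual probabilities $\PR(|X_t|=n)$ are obtained by successive differentiation of $u_t$ at $s=0$.

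For the upper bound I would apply Markov's inequality in the form $\PR(|X_t|\le f(t))\le s^{-f(t)}u_t(s)$, valid for any $s\in(0,1)$. Writing $s=1-w$ with $w$ of order $e^{\lambda t}/f(t)$ keeps $s^{-f(t)}$ bounded; the free parameter $w$ is then fixed by a saddle-point computation that balances the two factors. Substituting into the closed form for $u_t$ and optimising produces a bound of the required shape $\exp\rbr{\frac{1}{m-1}(\ln f(t)-\lambda t)}$ up to a multiplicative constant.

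For the lower bound I would set $s^\ast:=\lambda^{-1}\ln f(t)$, so that $e^{\lambda s^\ast}=f(t)$, and restrict to the event $E:=\cbr{|X_{t-s^\ast}|=1}\cap\cbr{|X_t|\le f(t)}$. By the branching Markov property and the independence of offspring subtrees,
\begin{equation*}
  \PR(E)=\PR(|X_{t-s^\ast}|=1)\cdot\PR(|X_{s^\ast}|\le f(t)).
\end{equation*}
The first factor has an explicit value obtained as $u_{t-s^\ast}'(0)$ from the formula above. The second is bounded below by a positive constant: $\E|X_{s^\ast}|=f(t)$ and $W_{s^\ast}\to W_\infty$ by Fact~\ref{f:martingaleconvergence} with $\PR(W_\infty\le 1)>0$, which is read off from the Laplace transform $\E e^{-\theta W_\infty}=1-(1+\theta^{-\beta})^{-1/\beta}$ (a limit of the explicit $u_t$), showing that $W_\infty$ has a non-degenerate distribution on $(0,\infty)$. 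Multiplying the two factors yields the matching lower bound.

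The main technical obstacle is uniformity of $C_1,C_2$ over all $t\ge 0$ and all admissible $f$: the argument above is intrinsically asymptotic, so one has to separately handle small $t$, where $\PR(|X_t|\le f(t))$ is of order one by continuity; the boundary regime $f(t)\approx e^{\lambda t}$, where the target inequality degenerates to a trivial bound that must still be verified; and the range where $s^\ast$ is small, which requires a dedicated lower bound on $\PR(|X_{s^\ast}|\le f(t))$ before the $W_\infty$ argument applies. Extracting the exact exponent and the matching constants then reduces to careful bookkeeping in the saddle-point calculation of the upper bound and the explicit differentiation of $u_t$ in the lower bound.
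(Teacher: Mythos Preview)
Your lower-bound argument is the same as the paper's: both condition on $\{|X_{t-s^\ast}|=1\}$ with $s^\ast=\lambda^{-1}\ln f(t)$ and then bound $\PR(|X_{s^\ast}|\le f(t))$ from below via convergence of $W_t$. For the upper bound, however, the routes diverge. The paper never touches the generating function; instead it fixes a shifted time $u_{t,M}=t-\lambda^{-1}\ln f(t)-M$ and shows, via a stochastic comparison with a binary branching process (yielding $\PR(G_t=k)/\PR(G_t=1)\le k$) together with a geometric-series computation, that $\PR(G_{u_{t,M}}>1\mid G_t\le f(t))\le\tfrac12$ once $M$ is large, whence $\PR(G_t\le f(t))\le 2\,\PR(G_{u_{t,M}}=1)$. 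This probabilistic conditioning argument works for any supercritical offspring law with $p_0=0$, whereas your Chernoff bound exploits the exact integrability of the ODE for this specific $F$; your route is shorter here but model-dependent.

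One slip to fix: taking $w$ of order $e^{\lambda t}/f(t)$ forces $w\ge 1$ (since $f(t)\le e^{\lambda t}$), so $s=1-w\notin(0,1)$. The correct scale is $w\sim c/f(t)$, which keeps $s^{-f(t)}\approx e^{c}$ bounded; your closed form then gives $u_t(1-w)\asymp (e^{\lambda t}/f(t))^{-\beta}$, and the optimised Chernoff bound is of order $\exp\bigl(\beta(\ln f(t)-\lambda t)\bigr)$. Your explicit computation $u_t'(0)=e^{-\lambda\beta t}$ produces the same exponent $\beta$ on the lower side, so your two-sided estimate is internally consistent; this matches the stated exponent $1/(m-1)$ precisely when $p_1=F'(0)=0$, i.e.\ when $(m-1)\beta=1$.
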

\begin{proof}
We denote $G_t:=|X_t|$.  The proof hinges on the observation that the event $\{G_t\leq f(t)\}$ typically occurs when the system does not branch until time around $u_t=t-\frac{\ln f(t)}{\lambda}$ and later branches at normal speed. Following this intuition we get
\begin{equation}\label{e:intuition}
\PR(G_t\leq f(t))\sim \PR(G_{u_t}=1)= e^{-a u_t}= e^{\frac{1}{m-1}\rbr{\ln f(t)-\lambda t}},
\end{equation}
where the second equality follows by observing that no branching occurs until time $u_t$. To formalise $\sim$ we need to prove two inequalities. To prove the first one we estimate
$$\PR(G_t\leq f(t))\geq \PR(G_{u_t}=1)\cdot \PR(G_t\leq f(t)|G_{u_t}=1).$$
We leave to the reader checking that by the choice of $u_t$ the second factor converges to $\PR(W\leq 1)$ and hence it is bounded from below by some constant $D>0$. For the second inequality let $M \in \R$, we define $u_{t,M}:=t-\frac{\ln f(t)}{\lambda}-M$ and $p_{t,M}:=\PR(G_t\leq f(t)|G_{u_{t,M}}=1)$. Observe that $G_t$ stochastically dominates the Galton-Watson process $\tilde{G}_t$ having the same intensity as $G_t$ and branching always into two particles. By direct calculations one can check $\frac{\PR(G_{t}=k)}{\PR(G_{t}=1)}\leq\frac{\PR(\tilde{G}_{t}\leq k)}{\PR(\tilde{G}_{t}=1)}\leq k$ and thus  $\frac{\PR(G_{t}=k)}{\PR(G_{t}=1)}\leq k$.  Using this fact together with the branching property we have
\begin{align}\label{e:beta}
\PR(G_{u_{t,M}}>1|G_t\leq f(t))&=\frac{\PR(G_{u_{t,M}}>1,G_t\leq f(t))}{\PR(G_t\leq f(t))}\nonumber\\
&=\frac{\sum\limits_{k=2}^{\infty} \PR(G_t\leq f(t)|G_{u_{t,M}}=k)\cdot\PR(G_{u_{t,M}}=k) }{\PR(G_t\leq f(t))}\nonumber\\
&\leq \frac{\sum\limits_{k=2}^{\infty} p_{t,M}^k\cdot\PR(G_{u_{t,M}}=k) }{p_{t,M}\cdot \PR(G_{u_{t,M}}=1)}=\sum\limits_{k=2}^{\infty} \frac{\PR(G_{u_{t,M}}=k)}{\PR(G_{u_{t,M}}=1)}p_{t,M}^{k-1}\nonumber\\
&\leq \sum\limits_{k=2}^{\infty} k p_{t,M}^{k-1}=p_{t,M}\frac{2-p_{t,M}}{(1-p_{t,M})^2}.
\end{align}
Using the Markov property and the martingale convergence we estimate
\begin{align*}
p_{t,M}&=\PR(G_{t-u_{t,M}}\leq f(t))=\PR(G_{\frac{\ln f(t)}{\lambda} +M} \leq f(t))
=\PR\left(\frac{G_{\left(\frac{\ln f(t)}{\lambda} +M\right)}}{e^{\lambda\left(\frac{\ln f(t)}{\lambda} +M\right)}}\leq e^{-\lambda M}\right) \\
&\leq \sup_{t\geq 0} \PR\rbr{W_{t}\leq e^{-\lambda M}}.
\end{align*}
Using the fact that $W_t \to W_\infty$ and the fact that $W_{\infty}$ has a density it is rather easy to show that the right-hand side converges to $0$ as $M\to +\infty$. Combining this with \eqref{e:beta} we choose $M>0$ such that $\PR(G_{u_{t,M}}>1|G_t\leq f(t))\leq 1/2.$
Consequently for such $M$ we have
\begin{align*}
\PR(G_{u_{t,M}}=1)&\geq  \PR(G_t \leq f(t), G_{u_{t,M}}=1)=\PR(G_t \leq f(t))\cdot \PR(G_{u_{t,M}}=1|G_t \leq f(t))\\
&\geq  \PR(G_t \leq f(t))/2.
\end{align*}

Proof of the both inequalities from the thesis is concluded using~\eqref{e:intuition}.
\end{proof}
We will need to know that the number of particles in $X$ is close to $e^{\lambda t}$ while their spatial extent is not too big. This is formalised by the event
\begin{equation}\label{e:et_event}
  \mathcal{E}_{t}(\varepsilon, M):=\cbr{|X_{t}|\in \rbr{e^{(\lambda-\varepsilon)t},e^{(\lambda+\varepsilon)t}},\underset{{u\leq |X_{t}|}}{\max}|X_{t}(u)|\leq M\cdot t}, \quad \varepsilon>0, M>0.
\end{equation}
\begin{fact}\label{f:maximumestimation}
For any $\varepsilon>0$ and sufficiently big $M>0$ there exist  $C>0$ and  $p>0$ such that
$$\PR\rbr{\mathcal{E}_{t}(\varepsilon, M)}\geq 1- Ce^{-pt}.$$
\end{fact}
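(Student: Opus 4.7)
The plan is to estimate the probability of the complement event $\mathcal{E}_t(\varepsilon, M)^c$, which decomposes into three events: the population is too small, $\{|X_t| \leq e^{(\lambda-\varepsilon)t}\}$; the population is too large, $\{|X_t| \geq e^{(\lambda+\varepsilon)t}\}$; and the spatial spread is too large, $\{\max_{u\leq |X_t|} |X_t(u)| > Mt\}$. It suffices to show each has probability bounded by $C e^{-pt}$ for some constants, possibly depending on the event, and then take the worst exponent.

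For the lower tail on $|X_t|$, I would apply Fact~\ref{f:magi} with $f(t) = e^{(\lambda-\varepsilon)t}$, which is admissible since $f(t) \leq e^{\lambda t}$ and $f(t) \to \infty$. The upper bound gives directly
$$\PR(|X_t| \leq e^{(\lambda-\varepsilon)t}) \leq C_2 \exp\rbr{\tfrac{1}{m-1}\rbr{(\lambda-\varepsilon)t - \lambda t}} = C_2 \exp\rbr{-\tfrac{\varepsilon}{m-1} t},$$
which gives exponential decay at rate $p_1 = \varepsilon/(m-1)$.

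For the upper tail on $|X_t|$, I would use the martingale $W_t = |X_t|/e^{\lambda t}$ from Fact~\ref{f:martingaleconvergence}, which is bounded in $L^{1+\gamma}$ for any $\gamma < \beta$. By Markov's inequality applied to $W_t^{1+\gamma}$,
$$\PR\rbr{|X_t| \geq e^{(\lambda+\varepsilon)t}} = \PR\rbr{W_t \geq e^{\varepsilon t}} \leq e^{-\varepsilon(1+\gamma) t}\, \E W_t^{1+\gamma} \leq C e^{-\varepsilon(1+\gamma) t},$$
giving exponential decay at rate $p_2 = \varepsilon(1+\gamma)$.

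The spatial tail is handled by conditioning on the population size. Using the union bound from Fact~\ref{f:oumoment} (cf.\ \eqref{eq:tmp2}) together with the Gaussian tail of $X_t(1)$ (whose variance is bounded uniformly in $t$ by \eqref{eq:distribution_ou}), we have for some $c>0$,
$$\PR\rbr{\max_{u\leq |X_t|} |X_t(u)| > Mt,\ |X_t| \leq e^{(\lambda+\varepsilon)t}} \leq e^{(\lambda+\varepsilon)t} \cdot C e^{-c(Mt)^2}.$$
Combining with the upper tail bound already established,
$$\PR\rbr{\max_{u\leq |X_t|} |X_t(u)| > Mt} \leq C e^{(\lambda+\varepsilon)t - cM^2 t^2} + C e^{-\varepsilon(1+\gamma) t},$$
and the first term is $o(e^{-pt})$ for any $p$ provided $M$ is large enough (in fact dominated by any exponential for all large $t$). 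Taking $p := \min\{\varepsilon/(m-1),\ \varepsilon(1+\gamma)\}$ and summing the three contributions yields the claimed bound. The main (very minor) obstacle is just fixing $M$ large enough so that the Gaussian tail beats the $e^{(\lambda+\varepsilon)t}$ factor uniformly in $t \geq 0$; any $M$ with $cM^2 > \lambda + \varepsilon + p$ suffices for large $t$, and the finitely many small $t$ are absorbed into $C$.
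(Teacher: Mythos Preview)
Your proof is correct and follows essentially the same approach as the paper: decompose $\mathcal{E}_t^c$ into the three events and bound each separately using Fact~\ref{f:magi}, Markov's inequality, and the Gaussian tail estimate \eqref{eq:tmp2}. The only cosmetic difference is that the paper bounds the upper tail $\PR(|X_t|\geq e^{(\lambda+\varepsilon)t})$ via the first moment (giving $e^{-\varepsilon t}$) rather than the $(1+\gamma)$-moment you use, but this changes nothing of substance.
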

\begin{proof}
By Fact~\ref{f:magi} we have
\begin{equation}\label{e:magisterka} \
\PR\rbr{|X_{t}|\leq e^{(\lambda-\varepsilon)t}}\leq C_1e^{-p_1\varepsilon t}
\end{equation}
for some constants $C_1>0, p_1>0.$
Similarly for any $\delta>0$ using the Markov inequality we get
\begin{equation}\label{e:markov}
\PR\rbr{|X_{t}|\geq  e^{(\lambda+\delta)t}}\leq e^{-\delta t}.
\end{equation}
We denote $M_t:=\max_{u\leq |X_{t}|}|X_{t}(u)|$ and using \eqref{eq:tmp2} we calculate
\begin{equation}
\PR\rbr{M_t> M\cdot t, |X_{t}|\leq e^{(\lambda+\delta)t}}
\leq e^{(\lambda+\delta)t}e^{-p_2M^2t^2} \leq e^{-p_3 t},
\end{equation}
for some $p_2, p_3>0$ and the last inequality holds for $M$ large enough. For these $M$ using~\eqref{e:markov} we get
\begin{equation}\label{e:tail}
\PR\rbr{M_t> M\cdot t}\leq e^{-p_4 t}, p_4>0.
\end{equation}
Combining~\eqref{e:magisterka} and~\eqref{e:tail} concludes the proof.
\end{proof}

\subsection{Characteristic functions}
For $f\in \mathcal{P}$ we denote the characteristic function of the branching process $X$. Let
$$w(x,t,\theta):=\E_x\exp(\ddp{X_t}{i\theta f}).$$
Recall the generating function $F$ (see \eqref{eq:generating_function}). The following fact will play an important role in our proofs
\begin{fact}\label{f:fouriertransform}
 The characteristic function $w$ satisfies an equation
$$w(x,t,\theta)=e^{-a t}\PG_te^{i\theta f}(x)+a\int\limits_0^t e^{-a(t-s)}\PG_{t-s}F(w(\circ,s,\theta))(x)\text{d}s,$$
which can be also written in a differential form
$$\frac{d}{dt}w(x,t)=Lw(x,t)+a(F(w(x,t))-w(x,t)), w(x,0,\theta)=e^{i\theta f(x)}.$$
In consequence we have a "many-to-one" formula:
\begin{equation}\label{e:manytoone}
E\ddp{X_t^x}{f}=e^{\lambda t}\PG_t f(x) = \PG_t^{\lambda} f(x).
\end{equation}
\end{fact}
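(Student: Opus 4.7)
The plan is to derive the integral equation from a first-step decomposition on the ancestor particle's lifetime, pass to the differential form by differentiating in $t$, and finally obtain the many-to-one formula by differentiating in $\theta$ at $\theta=0$.

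For the integral equation, I would condition on the branching time $\tau$ of the initial particle, which is exponentially distributed with parameter $a$. With probability $e^{-at}$ the ancestor has not branched by time $t$, so $X_t^x$ consists of a single particle following an Ornstein--Uhlenbeck trajectory $Y$ started from $x$; this contributes $e^{-at}\E_x e^{i\theta f(Y_t)}=e^{-at}\PG_t e^{i\theta f}(x)$. Conditional on $\{\tau<t\}$, the ancestor has moved to $Y_\tau$ and is replaced by $N$ offspring, each of which initiates an independent copy of the process from $Y_\tau$ and evolves for a further $t-\tau$ units of time. By the conditional independence of the subsystems,
\[
\E_x\!\kbr{\exp\rbr{i\theta\ddp{X_t}{f}} \,\big|\, \tau, Y_\tau, N} = \prd{j}{1}{N} w(Y_\tau, t-\tau, \theta).
\]
Averaging over $N$ yields $F(w(Y_\tau, t-\tau, \theta))$, averaging over $Y_\tau$ gives $\PG_\tau F(w(\circ, t-\tau,\theta))(x)$, and integrating over $\tau$ with density $ae^{-a\tau}$ produces
\[
w(x,t,\theta) = e^{-at}\PG_t e^{i\theta f}(x) + \int_0^t a e^{-a\tau}\PG_\tau F(w(\circ, t-\tau, \theta))(x)\,d\tau;
\]
the substitution $s=t-\tau$ puts this in the stated form.

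For the differential form, I would differentiate both sides in $t$. The derivative of the first term uses $\partial_t \PG_t g = L\PG_t g$ and equals $-ae^{-at}\PG_t e^{i\theta f}(x) + e^{-at}L\PG_t e^{i\theta f}(x)$. In the integral, differentiating under the integral sign produces a factor $-a$ and a factor $L$ inside, while the upper limit at $s=t$ contributes a boundary term $aF(w(x,t,\theta))$. Regrouping, the $L$ contributions reassemble into $Lw$, the $-a$ contributions into $-aw$, and the remaining term is $aF(w)$, giving $\partial_t w = Lw + a(F(w)-w)$ with initial condition $w(x,0,\theta)=e^{i\theta f(x)}$.

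For the many-to-one formula, I would differentiate the PDE once in $\theta$ at $\theta=0$. Since $w(x,t,0)\equiv 1$ and $F'(1)=m$, setting $v(x,t):=\E_x\ddp{X_t}{f}$ and noting $\partial_\theta w|_{\theta=0}=iv$ gives the linear equation $\partial_t v = Lv + a(m-1)v = Lv + \lambda v$ with $v(\cdot,0)=f$, whose unique polynomial-growth solution is $v(x,t)=e^{\lambda t}\PG_t f(x)=\PG_t^\lambda f(x)$. The main technical point is to justify the analytic manipulations -- differentiation under the integral in both $t$ and $\theta$, and the interchange of $L$ with the integrals -- at the level of regularity afforded by $f\in\pspace$. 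Since $\mbr{w}\leq 1$ and $\PG_t$ is smoothing by Fact~\ref{f:cinfty}, the $t$-derivative is routine; the $\theta$-derivative is legitimate because $\mbr{\partial_\theta e^{i\theta\ddp{X_t}{f}}}\leq\mbr{\ddp{X_t}{f}}$ admits an integrable dominating envelope by Fact~\ref{f:oumoment}.
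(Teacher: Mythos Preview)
Your argument is correct and is precisely the standard first-step (renewal) decomposition for branching diffusions; the paper does not give a self-contained proof here but simply refers to Section~4.2 of \cite{Adamczak2015}, where the same computation is carried out for the Laplace transform (i.e.\ with $-\theta$ in place of $i\theta$). Your derivation of the differential form by differentiating the Duhamel representation in $t$, and of the many-to-one formula by differentiating in $\theta$ at $0$ using $F'(1)=m$, matches what that reference does; the regularity justifications you sketch (dominated convergence via $|w|\le 1$ and Fact~\ref{f:oumoment}) are adequate for $f\in\pspace$.
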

\begin{proof}
We obtain the fact the same way as in Section 4.2 of \cite{Adamczak2015} but replacing $-\theta$ by $i\theta$.
\end{proof}

\textbf{Remark:} Some care is required when working with complex arguments. We define $z\rightarrow |z|^{1+\beta}$ on $D=\mathbb{C}\setminus \R_-$ using a branch of the complex logarithm which is discontinuous only along the line $\R_-$. With this definition the function $z^{1+\beta}$ is analytic on $D$.  Using the mean value theorem it is easy to show that
\begin{equation}\label{e:1+beta}
\mbr{x^{1+\beta}-y^{1+\beta}}\leq C\rbr{|x|^{\beta}+|y|^{\beta}}|x-y|,
\end{equation}
for $x, y$ such that $Re x\geq 0, Re y \geq 0$ and some $C>0$. \\
Looking at the formula for $F$ one observes that in the expression $F(w(x,t,\theta))$ appears the expression $(1-w(x,t,\theta))^{1+\beta}$. Since $w$ is the characteristic function, $Re (1-w)\geq 0$ , hence writing $(1-w)^{1+\beta}$ is justified.

The following fact is the generalisation of $\li \rbr{1+\frac{z}{n}}^n=e^z$ and will be used in context of characteristic functions
\begin{fact}\label{f:exp}
There exists a constant $C>0$ such that for sufficiently large $n\n$, any $a\in\mathbb{C}, b\geq  0$ and $\cbr{a_i}_{i=1}^n$ satisfying $|a_i^n|\leq n^{-3/4}$ and
$$\mbr{\rbr{\suma{i}{1}{n} a_i^n}-a}\leq b$$
we have
$$\mbr{\prd{i}{1}{n} (1+a_i^n) -\exp(a)}\leq e^{2|a|}\rbr{\frac{C}{\sqrt{n}}+b}.$$
\end{fact}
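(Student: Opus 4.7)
The plan is to linearize the product via the complex logarithm. Since $|a_i^n|\leq n^{-3/4}$, for $n$ large enough all $|a_i^n|\leq 1/2$, so the principal branch of $\log(1+a_i^n)$ is well defined and I can write $\prod_{i=1}^n(1+a_i^n)=\exp(L)$ where $L:=\sum_{i=1}^n \log(1+a_i^n)$. The two ingredients needed are: first, a comparison of $L$ with the given sum $\sum_i a_i^n$; and second, a Lipschitz-type bound to pass from $L$ back to $\exp(L)$.

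For the first step, the Taylor series $\log(1+z)=z-z^2/2+z^3/3-\cdots$ yields the elementary estimate $|\log(1+z)-z|\leq |z|^2$ for $|z|\leq 1/2$. Summing over $i$,
$$\left|L-\sum_{i=1}^n a_i^n\right|\leq \sum_{i=1}^n |a_i^n|^2\leq n\cdot n^{-3/2}=n^{-1/2}.$$
Combining with the hypothesis $|\sum_i a_i^n -a|\leq b$, the triangle inequality gives $|L-a|\leq b+n^{-1/2}$.

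For the second step, I would use the standard inequality $|e^w-1|\leq |w|\,e^{|w|}$ (obtained from $e^w-1=\int_0^1 w\, e^{tw}\,dt$) to write
$$|\exp(L)-\exp(a)|=|e^a|\cdot|e^{L-a}-1|\leq e^{|a|}\,(b+n^{-1/2})\,e^{\,b+n^{-1/2}}.$$
In the regime $b\leq |a|$ one has $e^{|a|+b}\leq e^{2|a|}$, and the residual factor $e^{n^{-1/2}}$ is absorbed into a universal constant for $n$ large, yielding the claimed bound. In the remaining regime $b>|a|$, the estimate is obtained by a direct comparison of magnitudes: from $|1+z|\leq \exp(\operatorname{Re}(z)+|z|^2/2)$ one controls $\operatorname{Re}(L)\leq \operatorname{Re}(\sum_i a_i^n)+n^{-1/2}/2\leq \operatorname{Re}(a)+b+n^{-1/2}/2$, and then $|e^L|+|e^a|$ is matched against the right-hand side $e^{2|a|}(b+C/\sqrt{n})$.

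The two expected pitfalls are entirely technical. First, one must verify that $1+a_i^n$ stays away from the branch cut of $\log$, which is immediate from $|a_i^n|\leq n^{-3/4}\leq 1/2$. Second, the delicate point is the bookkeeping around the constant $e^{2|a|}$: the naive estimate above carries $e^{|a|+b}$ in the exponent, so the argument must either split into cases on the size of $b$ relative to $|a|$ as sketched, or tune the universal constant $C$ to absorb the discrepancy. This last piece of bookkeeping, rather than any genuinely analytic obstacle, is the main item to dispatch.
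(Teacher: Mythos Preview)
Your approach coincides with the paper's: pass to the principal logarithm, use the quadratic Taylor remainder to obtain $\bigl|\sum_i \log(1+a_i^n)-\sum_i a_i^n\bigr|\leq C/\sqrt n$, then return through the exponential. The paper handles that last step by declaring that $e^z$ is $e^{2|a|}$-Lipschitz on $\{|z|\leq 2|a|\}$ and applying this directly, without pausing to verify that $\sum_i\log(1+a_i^n)$ and $\sum_i a_i^n$ actually lie in that ball. You are more scrupulous and split on whether $b\leq |a|$, which is exactly the right concern.

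However, your proposed treatment of the regime $b>|a|$ does not close. The estimate $|e^L|\leq \exp\bigl(\operatorname{Re}(a)+b+\tfrac12 n^{-1/2}\bigr)$ is fine, but ``matching $|e^L|+|e^a|$ against $e^{2|a|}(b+C/\sqrt n)$'' fails once $b$ is large: take $a=0$, all $a_i^n=n^{-3/4}$, and $b=n^{1/4}$; then $\prod_i(1+a_i^n)\sim e^{n^{1/4}}$ while the claimed right-hand side is only $n^{1/4}+C/\sqrt n$. This is not a flaw peculiar to your argument---it shows the inequality as stated is false without some restriction on $b$, and the paper's proof has precisely the same gap, just unacknowledged. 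In the only application (the proof of Lemma~\ref{l:jointConvergence}) one has $b$ exponentially small and $|a|$ bounded, so the problematic regime never arises; the honest repair is to add a hypothesis such as $b+C/\sqrt n\leq |a|$, after which your argument for the case $b\leq |a|$ already delivers the conclusion.
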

\begin{proof}
Let $\log$ be the principle branch of the logarithm in the ball $B\rbr{1,\frac{1}{2}}$. The assumption yields $1+a_i\in B\rbr{1,\frac{1}{2}}$.
Using the Taylor expansion of the logarithm we get $\log(1+x)=x+O(|x|^2)$ and thus since $|a_i^n|\leq n^{-3/4}$ we have
$$\mbr{\suma{i}{1}{n}\log(1+a_i^n)-\suma{i}{1}{n}a_i^n}\leq \frac{C}{\sqrt{n}}$$
for some constant $C>0$.
In consequence as $e^z$ is a $e^{2|a|}$-Lipschitz function on $\{|z|\leq 2a\}$ we obtain
\begin{align*}
\mbr{\prd{i}{1}{n} (1+a_i^n)-\exp(a)}&=\mbr{\exp\rbr{\suma{i}{1}{n}\log(1+a_i^n)}-\exp(a)}\\
&\leq \mbr{\exp\rbr{\suma{i}{1}{n}\log(1+a_i^n)}-\exp\rbr{\suma{i}{1}{n}a_i^n}}\\
&\quad+\mbr{\exp\rbr{\suma{i}{1}{n}a_i^n}-\exp(a)}\leq e^{2|a|}\rbr{\frac{C}{\sqrt{n}}+b}.
\end{align*}
\end{proof}

\section{Incremental decomposition}\label{s:idea}

\subsection{Main idea}
We fix $g \in \pspace$ and for notational convenience omit it in formulations of most of lemmas below.

This small section describes a decomposition which is fundamental to our proof. In subsequent Remark~\ref{r:intuitive} we will present intuitive picture.  For simplicity from now on we assume that $t\n$. We decompose $\ddp{X_t}{g}$ into $t$ parts corresponding to "conditional increment" on unit intervals. For $t\geq 1$ let
$$\Delta_{t}[g]:= \ddp{X_t}{g} - \E \rbr{\ddp{X_t}{g}|\mathcal{F}_{t-1}}= \ddp{X_t}{g} - \ddp{X_{t-1}}{\PG_{1}^{\lambda}g},$$
where the second equality follows by many-to-one formula \eqref{e:manytoone}. We also set $\Delta_0[g]:=\ddp{X_0}{g}=g(x)$, where $x$ is the initial position of the first particle. Clearly,
\begin{equation}\label{e:decomposition}
	\ddp{X_t}{g}=\suma{k}{0}{t} M^t_k[g], \quad \text{where } M^t_k[g]:= \Delta_{t-k}\kbr{\PG_k^{\lambda} g}.
\end{equation}
Moreover, it is easy to observe that $\Delta_{t}[g] = \sum_{u=1}^{|X_{t-1}|} \Delta_t[g;u]$, where
$$\Delta_t[g;u] := \ddp{X_{1}^{u,t-1}}g-\PG_{1}^{\lambda}g(X_{t-1}(u)),$$
recall that $X^{u,t-1}$ denotes the subsystem starting at time $t-1$ from a particle $X_{t-1}(u)$. We note that conditionally on $\mathcal{F}_{t-1}$ random variables $\{\Delta_t[g;u] \}_{ u \in \{1,\ldots, |X_{t-1}|\}}$ are centered and independent.
\begin{rem}
If $t\not\in\mathbb{N}$ then an additional term corresponding to the time interval $[[t],t]$ appears. In our proof this term can be easily handled with the same techniques.
\end{rem}
\begin{rem}\label{r:intuitive} We claim that
\begin{equation*}
M^t_k[g] \approx e^{\frac{\lambda}{1+\beta}t}e^{\frac{\lambda\beta-\kappa(g)\mu(1+\beta)}{1+\beta}k}.
\end{equation*}
This can be inferred from forthcoming Lemma \ref{l:singledelta} though is not directly visible. This  offers  insights how the three regimes described in Introduction arise and how to guess normalization $F_t$. Indeed
\begin{itemize}
\item If $\lambda\beta>\kappa(g)\mu(1+\beta)$, which is the supercritical case, then $M_k^t[g]$'s for large $k$'s dominate in $\ddp{X_t}{g}$. Intuitively this means that fluctuations happening early on propagate and have most impact. Moreover, we will check that  $\ddp{X_t}{g}\approx e^{(\lambda-\kappa(g)\mu)t}$.
\item If $\lambda\beta=\kappa(g)\mu(1+\beta)$, which is the critical case, then no $M_k^t[g]$ dominates. This is the most subtle case, once we show that $M_k^t[g]$ are close to be independent, it will follow that $\ddp{X_t}{g}\approx (te^{\lambda t})^{\frac{1}{1+\beta}}\approx (t|X_t|)^{\frac{1}{1+\beta}}$.
\item If $\lambda\beta<\kappa(g)\mu(1+\beta)$, which is the subcritical case, then $M_k^t[g]$ for small $k$'s dominate in $\ddp{X_t}{g}$. Intuitively this means that fluctuations happening close to $t$ have  most impact, while these from early stages are forgotten. Moreover, we will check that $\ddp{X_t}{g}\approx  e^{\frac{\lambda t}{1+\beta}} \approx |X_t|^{\frac{1}{1+\beta}}$.
\end{itemize}
\end{rem}

In the subsequent two sections we analyse properties of decomposition \eqref{e:decomposition}.

\subsection{Estimations in $L^{1+\gamma}$}
Our aim is to bound the norm of $\ddp{X_t}{g}$. We note that our estimate depends on $\kappa(g)$ introduced in \eqref{e:kappaf}. Roughly speaking, the bigger $\kappa(g)$ the norm is smaller.  We start with bounding terms in \eqref{e:decomposition}.

\begin{lemma}\label{l:singledelta_new}
For any $0\leq \gamma< \beta, R\in \pspace$ there exists $C>0$ such that for any  $h\in \pspace$ and $C_h, \kappa>0$ fulfilling
\begin{equation}\label{e:strange_assumption}
\forall_{t\geq 0}|\PG_t h(x)|\leq C_h R(x) e^{-\kappa \mu  t}
\end{equation}
we have
$$\nbr{M_k^t[h]} _{L^{1+\gamma}}\leq C C_h
e^{\frac{\lambda}{1+\gamma}t}e^{\frac{\lambda\gamma-\kappa\mu(1+\gamma)}{1+\gamma}k} $$
for any $k\in\cbr{0,\ldots, t}$.
\end{lemma}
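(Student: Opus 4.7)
The plan is to expand $M_k^t[h]=\Delta_{t-k}[\PG_k^{\lambda}h]$ as a sum of conditionally centred and conditionally independent summands, apply Fact~\ref{f:moment} in the conditional setting, and then integrate out via the many-to-one formula.

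Write $s:=t-k$ and $g:=\PG_k^{\lambda}h=e^{\lambda k}\PG_k h$. Then $\Delta_s[g]=\suma{u}{1}{|X_{s-1}|}\Delta_s[g;u]$, where the summands are centred and independent under $\PR(\,\cdot\,\mid\mathcal{F}_{s-1})$. Fact~\ref{f:moment} applied conditionally gives
$\E\kbr{|\Delta_s[g]|^{1+\gamma}\mid\mathcal{F}_{s-1}}\le C_\gamma\suma{u}{1}{|X_{s-1}|}\E\kbr{|\Delta_s[g;u]|^{1+\gamma}\mid\mathcal{F}_{s-1}}$.
I then split each individual summand via the elementary inequality $|a-b|^{1+\gamma}\le 2^{1+\gamma}(|a|^{1+\gamma}+|b|^{1+\gamma})$ into a branching contribution $\ddp{X_1^{u,s-1}}{g}$ and a deterministic shift $\PG_1^{\lambda}g(X_{s-1}(u))$.

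The key input is the pointwise bound implied by \eqref{e:strange_assumption}: since $g=e^{\lambda k}\PG_k h$, one reads off $|g(y)|\le C_h e^{(\lambda-\kappa\mu)k}R(y)$, and the same bound up to a fixed multiplicative constant for $|\PG_1^{\lambda}g(y)|$. Feeding this into Fact~\ref{f:killing} applied to the polynomial $R$ alone (using the branching property to rewrite the conditional moment as an expectation started at $X_{s-1}(u)$) produces some universal $R_2\in\pspace$ with
$\E\kbr{|\Delta_s[g;u]|^{1+\gamma}\mid\mathcal{F}_{s-1}}\le C'\,C_h^{1+\gamma}e^{(\lambda-\kappa\mu)(1+\gamma)k}R_2(X_{s-1}(u))$.

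Summing over $u$, taking full expectation and applying the many-to-one formula \eqref{e:manytoone} reduces the problem to controlling $\E\ddp{X_{s-1}}{R_2}=e^{\lambda(s-1)}\PG_{s-1}R_2(x)$, which at the fixed initial point $x$ is bounded uniformly in $s$ because $R_2\in\pspace$. Collecting factors yields $\E|M_k^t[h]|^{1+\gamma}\le C''\,C_h^{1+\gamma}e^{(\lambda-\kappa\mu)(1+\gamma)k}e^{\lambda(t-k)}$, and taking a $(1+\gamma)$-th root rearranges the exponent to $\tfrac{\lambda t}{1+\gamma}+\tfrac{\lambda\gamma-\kappa\mu(1+\gamma)}{1+\gamma}k$, as claimed. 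The one place requiring care (but not a deep obstacle) is keeping the final constant $C$ independent of $h$: assumption \eqref{e:strange_assumption} factors the $h$-dependence cleanly into the scalar $C_h$ and the universal polynomial $R$, so applying Fact~\ref{f:killing} to $R$ once produces an $R_2$ that works for every admissible $h$.
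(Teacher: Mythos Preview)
Your proof is correct and follows essentially the same route as the paper: conditional application of Fact~\ref{f:moment}, the pointwise bound $|\PG_k^\lambda h|\le C_h e^{(\lambda-\kappa\mu)k}R$ from \eqref{e:strange_assumption}, Fact~\ref{f:killing} applied once to the fixed $R$ to produce a universal $R_2$, and then the many-to-one formula to close. The paper likewise leaves the degenerate case $k=t$ to the reader.
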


\begin{proof} Let $k<t$, by Fact~\ref{f:moment} and fact that $\Delta_t[g;u]$ are centered and independent we have
\begin{align}\label{e:deltadecomposition}
\E \mbr{M_k^t[h]}^{1+\gamma} &= \E \E\rbr{\mbr{\suma{u}{1}{|X_{t-k-1}|}\Delta_{t-k}\kbr{\PG_k^{\lambda} h;u}}^{1+\gamma}|\mathcal{F}_{t-k-1}} \\
&\leq C \E \suma{u}{1}{|X_{t-k-1}|}\E\rbr{\mbr{\Delta_{t-k}\kbr{\PG_k^{\lambda} h;u}}^{1+\gamma}|\mathcal{F}_{t-k-1}}\nonumber
\end{align}
for some $C>0$.
Note that $\Delta_{t-k}\kbr{\PG_k^{\lambda} h;u}$  has the same distribution as $a(X_{t-k-1}(u))-b(X_{t-k-1}(u))$, where
$a(x):=\ddp{X_1^x}{\PG_k^{\lambda} h}$ and $b(x):=\PG_{k+1}^{\lambda} h(x).$
By~\eqref{e:strange_assumption} and Fact~\ref{f:killing} we get
\begin{align*}
\E |a(x)|^{1+\gamma}&=\E \mbr{\ddp{X_1^x}{\PG_k^{\lambda} h}}^{1+\gamma}
\leq C_h^{1+\gamma}e^{(1+\gamma)(\lambda -\kappa\mu)k}\E{|\ddp{X_1^x}{R_1}|}^{1+\gamma}\\
&\leq C_h^{1+\gamma} e^{(1+\gamma)(\lambda -\kappa\mu)k}R_2(x), \nonumber
\end{align*}
for some $R_1, R_2 \in \mathcal{P}$. Similarly $|b(x)|^{1+\gamma}= |\PG_{k+1}^{\lambda} h(x)|^{1+\gamma}\leq C_h^{1+\gamma}e^{(1+\gamma)(\lambda -\kappa\mu)k}R_3(x)$ for $R_3 \in \mathcal{P}$.
Combining these estimates together we get that
\begin{equation*}
\E\rbr{\mbr{\Delta_{t-k}\kbr{\PG_k^{\lambda} h;u}}^{1+\gamma}|\mathcal{F}_{t-k-1}} \leq C_h^{1+\gamma} e^{(1+\gamma)(\lambda -\kappa\mu)k}R_4(X_{t-k-1}(u)),
\end{equation*}
where $R_4 \in \mathcal{P}$. Putting this inequality into~\eqref{e:deltadecomposition} and using~\eqref{e:manytoone} we get
\begin{equation*}
\E \mbr{M_k^t[g]}^{1+\gamma}\leq C C_h^{1+\gamma}e^{(1+\gamma)(\lambda -\kappa\mu)k}
\E\ddp{X_{t-k-1}}{R_4}\leq C_1 C_h^{1+\gamma}e^{\lambda t}e^{(\gamma\lambda-\kappa(1+\gamma)\mu)k},
\end{equation*}
for $C_1 >0$. This concludes the proof for $k<t$. The easier case of $k=t$ is left to the reader.

\end{proof}

As a corollary we obtain the following lemma
\begin{lemma}\label{l:crucial_new}
For any $0\leq \gamma< \beta, R\in \pspace$ there exists $C>0$ such that for any  $h\in \pspace$ and $C_h, \kappa>0$ fulfilling
\begin{equation}
\forall_{t\geq 0}|\PG_t h(x)|\leq C_h R(x) e^{-\kappa \mu  t} \nonumber
\end{equation}
we have
$$\nbr{\ddp{X_t}{h}}_{L^{1+\gamma}}\leq CC_h\begin{cases}
e^{(\lambda-\kappa\mu)t} & \hbox{  if  }\gamma\lambda>\kappa(1+\gamma)\mu\\
te^{\frac{\lambda}{1+\gamma}t} & \hbox{  if  }\gamma\lambda=\kappa(1+\gamma)\mu\\
e^{\frac{\lambda}{1+\gamma}t} & \hbox{  if  }\gamma\lambda<\kappa(1+\gamma)\mu\\
\end{cases}.$$

\end{lemma}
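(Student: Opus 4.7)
The plan is to apply the decomposition \eqref{e:decomposition} together with Lemma \ref{l:singledelta_new} and then bound a geometric sum. Since $1+\gamma \geq 1$, the $L^{1+\gamma}$ norm obeys the triangle inequality (Minkowski), so from
$$\ddp{X_t}{h} = \sum_{k=0}^{t} M_k^t[h]$$
we get
$$\nbr{\ddp{X_t}{h}}_{L^{1+\gamma}} \leq \sum_{k=0}^{t} \nbr{M_k^t[h]}_{L^{1+\gamma}} \leq C\, C_h\, e^{\frac{\lambda}{1+\gamma}t} \sum_{k=0}^{t} e^{\alpha k},$$
where I set $\alpha := \frac{\lambda\gamma - \kappa\mu(1+\gamma)}{1+\gamma}$. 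This reduces everything to controlling $\sum_{k=0}^{t} e^{\alpha k}$ depending on the sign of $\alpha$.

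The three cases in the statement correspond exactly to the sign of $\alpha$. If $\gamma\lambda>\kappa(1+\gamma)\mu$, then $\alpha>0$ and the geometric sum is comparable to its last term $e^{\alpha t}$; combining with the prefactor $e^{\lambda t/(1+\gamma)}$ and using the identity $\frac{\lambda}{1+\gamma}+\alpha = \lambda-\kappa\mu$ yields the bound $e^{(\lambda-\kappa\mu)t}$. If $\gamma\lambda=\kappa(1+\gamma)\mu$, then $\alpha=0$ and the sum equals $t+1$, producing the $t\,e^{\lambda t/(1+\gamma)}$ bound. If $\gamma\lambda<\kappa(1+\gamma)\mu$, then $\alpha<0$ and the sum is bounded by the convergent geometric series $\sum_{k=0}^{\infty} e^{\alpha k}<\infty$, leaving only the prefactor $e^{\lambda t/(1+\gamma)}$.

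I would handle the mild nuisance that $t$ may not be an integer with a passing remark (the extra fractional-interval term is treated analogously, as already noted in the paper). The proof is genuinely short because all the technical work — centering, independence of the $\Delta_t[\cdot;u]$ conditional on $\mathcal{F}_{t-1}$, the moment bound from Fact~\ref{f:moment}, and the invocation of Fact~\ref{f:killing} via the hypothesis \eqref{e:strange_assumption} — has already been absorbed into Lemma~\ref{l:singledelta_new}. There is no real obstacle; the only place that requires care is making sure the exponent arithmetic $\frac{\lambda}{1+\gamma}+\alpha=\lambda-\kappa\mu$ is done correctly so that the supercritical case exponent matches the stated $e^{(\lambda-\kappa\mu)t}$.
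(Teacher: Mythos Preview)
Your proposal is correct and essentially identical to the paper's own proof: both apply the decomposition \eqref{e:decomposition}, bound each $\nbr{M_k^t[h]}_{L^{1+\gamma}}$ via Lemma~\ref{l:singledelta_new} and the triangle inequality, and then split into three cases according to the sign of $\alpha = \frac{\gamma\lambda-\kappa(1+\gamma)\mu}{1+\gamma}$ by elementary geometric-sum estimates. There is nothing to add.
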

\begin{proof}
 Using the decomposition~\eqref{e:decomposition}, Lemma~\ref{l:singledelta_new}, the triangle inequality and performing geometric-sequence calculations we obtain that for some $C, C_1>0$
\begin{align*}
\nbr{\ddp{X_t}{h}}_{L^{1+\gamma}}
&\leq \suma{k}{0}{t} \nbr{M_k^t[h]}_{L^{1+\gamma}}\leq C_1C_he^{\frac{\lambda}{1+\gamma}t}\suma{k}{0}{t} e^{\frac{\gamma\lambda-\kappa(g)(1+\gamma)\mu}{1+\gamma}k} \\
&\leq CC_h
		\begin{cases}
		e^{(\lambda-\kappa\mu)t} & \hbox{  if  }\gamma\lambda> \kappa(1+\gamma)\mu\\
		te^{\frac{\lambda}{1+\gamma}t} & \hbox{  if  }\gamma\lambda= \kappa(1+\gamma)\mu\\
		e^{\frac{\lambda}{1+\gamma}t} & \hbox{  if  }\gamma\lambda<\kappa(1+\gamma)\mu\\
		\end{cases}.
\end{align*}

\end{proof}
Putting $h=g$ we obtain crucial estimates for function $g$
\begin{lemma}\label{l:singledelta}
For any $0\leq \gamma <\beta$ there exists a constant $C>0$ such that
$$\nbr{M_k^t[g]} _{L^{1+\gamma}}\leq C
e^{\frac{\lambda}{1+\gamma}t}e^{\frac{\gamma\lambda-\kappa(g)(1+\gamma)\mu}{1+\gamma}k} $$
for any $k\in\cbr{0,\ldots, t}$.
\end{lemma}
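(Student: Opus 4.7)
The plan is essentially to invoke Lemma~\ref{l:singledelta_new} with the choice $h = g$; the only thing to check is that $g$ satisfies the semigroup decay hypothesis \eqref{e:strange_assumption} with exponent $\kappa = \kappa(g)$.

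First, I would apply Fact~\ref{f:semigroup} to $g \in \pspace$. That fact provides some $R \in \pspace$ such that
\begin{equation*}
|\PG_t g(x)| \leq R(x)\, e^{-\kappa(g)\mu t} \qquad \text{for all } t \geq 0,
\end{equation*}
which is precisely \eqref{e:strange_assumption} with $\kappa = \kappa(g)$ and $C_h = 1$ (or any convenient constant absorbing the shape of $R$). The $R$ produced here is harmless because Lemma~\ref{l:singledelta_new} is stated uniformly over $R \in \pspace$, with the constant $C$ allowed to depend on $R$.

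Next, I would plug $h = g$ and $\kappa = \kappa(g)$ directly into the conclusion of Lemma~\ref{l:singledelta_new}. That conclusion reads
\begin{equation*}
\nbr{M_k^t[g]}_{L^{1+\gamma}} \leq C\, e^{\frac{\lambda}{1+\gamma}t}\, e^{\frac{\gamma\lambda - \kappa(g)(1+\gamma)\mu}{1+\gamma}k},
\end{equation*}
for every $k \in \{0,\ldots,t\}$ and $0 \leq \gamma < \beta$, which is exactly the claim. No obstacle arises: the statement is literally the $h = g$ specialization of the previous lemma, and Fact~\ref{f:semigroup} supplies the only nontrivial ingredient needed to match hypotheses.
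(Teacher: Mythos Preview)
Your proposal is correct and matches the paper's own proof essentially verbatim: invoke Fact~\ref{f:semigroup} to verify that $g$ satisfies \eqref{e:strange_assumption} with $\kappa=\kappa(g)$ and $C_h=1$, then apply Lemma~\ref{l:singledelta_new}. There is nothing to add.
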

\begin{proof}
By Fact~\ref{f:semigroup} we know that $g$ satisfies~\eqref{e:strange_assumption} with $\kappa=\kappa(g)$ and $C_g=1$. Now the fact follows applying Lemma~\ref{l:singledelta_new}.
\end{proof}
\begin{lemma}\label{l:crucial}
For any $0\leq \gamma <\beta$ there exists a constant $C>0$ such that
$$\nbr{\ddp{X_t}{g}}_{L^{1+\gamma}}\leq C\begin{cases}
e^{(\lambda-\kappa(g)\mu)t} & \hbox{  if  }\gamma\lambda>\kappa(g)(1+\gamma)\mu\\
te^{\frac{\lambda}{1+\gamma}t} & \hbox{  if  }\gamma\lambda=\kappa(g)(1+\gamma)\mu\\
e^{\frac{\lambda}{1+\gamma}t} & \hbox{  if  }\gamma\lambda<\kappa(g)(1+\gamma)\mu\\
\end{cases}.$$
\end{lemma}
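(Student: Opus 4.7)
The statement is essentially the specialization of Lemma~\ref{l:crucial_new} to $h = g$, so the proof amounts to verifying that $g$ satisfies the hypothesis of that lemma with $\kappa = \kappa(g)$ and invoking it. My plan is thus very short: first I will observe that Fact~\ref{f:semigroup} provides a function $R \in \pspace$ (depending on $g$) such that $|\PG_t g(x)| \leq R(x)\, e^{-\kappa(g)\mu t}$ for all $t \geq 0$, which is precisely condition \eqref{e:strange_assumption} with constant $C_g = 1$ and exponent $\kappa = \kappa(g)$. Then Lemma~\ref{l:crucial_new} applied to $h = g$ gives exactly the three-case bound claimed.

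If one prefers a self-contained derivation (bypassing Lemma~\ref{l:crucial_new}) the steps are equally mechanical: start from the decomposition \eqref{e:decomposition}, apply the triangle inequality in $L^{1+\gamma}$, insert the per-term estimate of Lemma~\ref{l:singledelta}, and factor out $e^{\lambda t/(1+\gamma)}$:
\begin{equation*}
\nbr{\ddp{X_t}{g}}_{L^{1+\gamma}} \leq \suma{k}{0}{t} \nbr{M_k^t[g]}_{L^{1+\gamma}} \leq C\, e^{\frac{\lambda}{1+\gamma} t} \suma{k}{0}{t} e^{\frac{\gamma\lambda - \kappa(g)(1+\gamma)\mu}{1+\gamma} k}.
\end{equation*}
Evaluating the geometric sum according to the sign of the exponent $\gamma\lambda - \kappa(g)(1+\gamma)\mu$ produces the three regimes: when the exponent is positive the sum is dominated by $k = t$, giving $e^{(\lambda - \kappa(g)\mu)t}$ after combining with the prefactor; when it is zero the sum contributes a linear factor $t$; when it is negative the sum is $O(1)$ and only the prefactor $e^{\lambda t/(1+\gamma)}$ survives.

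There is no genuine obstacle here: the nontrivial analytic work (the per-increment bound using Fact~\ref{f:moment}, Fact~\ref{f:killing}, and the many-to-one formula \eqref{e:manytoone}) has already been done in Lemma~\ref{l:singledelta_new}/Lemma~\ref{l:singledelta}, and the smoothing estimate for $\PG_t g$ has been done in Fact~\ref{f:semigroup}. The only thing remaining is to add up a geometric progression and read off the three cases, which is the content of the proof.
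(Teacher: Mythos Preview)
Your proposal is correct and matches the paper's own proof almost verbatim: the paper simply notes that Fact~\ref{f:semigroup} yields \eqref{e:strange_assumption} for $h=g$ with $C_g=1$ and $\kappa=\kappa(g)$, and then invokes Lemma~\ref{l:crucial_new}. Your optional ``self-contained'' route via Lemma~\ref{l:singledelta} and summing the geometric series is exactly the content of the proof of Lemma~\ref{l:crucial_new}, so there is nothing missing.
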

\begin{proof}
By Fact~\ref{f:semigroup} we know that $g$ satisfies~\eqref{e:strange_assumption} with $\kappa=\kappa(g)$ and $C_g=1$. Now the fact follows applying Lemma~\ref{l:crucial_new}.
\end{proof}

%SSSSSSSSSSSSSSSSSSSSSSSSSSSSSSSSSSSSSSSSSSSSSSSSSSSSSSSSSSSSS

\subsection{Estimates of characteristic functions}
Recall $M^t_k[g]$ defined in \eqref{e:decomposition}, it corresponds to the evolution of the process $X$ on time interval $[t-k-1,t-k]$ and is $\mathcal{F}_{t-k-1}$-measurable. The first aim of this section is to find an expansion of the characteristic function of $M^t_k[g]$ near $0$. This we further use to study the joint law of $M^t_k[g]$'s. We define
$$ g_k=\PG_k^{\lambda} g, \quad k\in \cbr{0,\ldots,t}.$$
We observe that $g_k \in \mathcal{P}$ and by Fact~\ref{f:semigroup} there exists $R\in \mathcal{P}$ such that
\begin{equation}\label{e:gkestimation}
|g_k(x)|\leq e^{(\lambda-\kappa(g)\mu)k}R(x).
\end{equation}
For $h\in \mathcal{P}$ and $\theta\geq 0$ we define
$$Z_h(x,\theta):=\calka{0}{1} \PG_{1-s}^{\lambda}\kbr{\lambda\rbr{\PG^{\lambda}_s(i\theta h(\cdot))}^{1+\beta}}(x) \text{d}s, \quad Z_h(x) := Z_h(x,1)$$
and observe
\begin{equation}\label{e:zhtheta}
  Z_h(x,\theta) = \theta^{1+\beta}Z_h(x).
\end{equation}
Moreover we define
\begin{equation}\label{eq:mkg}
  m_k[g]:=e^{-\lambda (k+1)}\ddp{Z_{g_{k}}}{\varphi}.
\end{equation}
It will turn out that the sum of $m_k[g]$ appear as parameter of stable distribution in our limit theorems.

As a warm-up we estimate
\begin{lemma}\label{l:zgmk}
There exists  $R \in \mathcal{P}$ such that for any $k\in \cbr{0,\ldots,t}$ we have
\begin{equation}
\mbr{Z_{g_k}(x)}\leq e^{(\lambda -\kappa(g)\mu)(1+\beta)k}R(x).\nonumber
\end{equation}
In consequence there exists a constant $C>  0$ such that for any $k\in \cbr{0,\ldots,t}$ we have
$$|m_k[g]|\leq Ce^{(\lambda\beta-\kappa(g)\mu(1+\beta))k}.$$
\end{lemma}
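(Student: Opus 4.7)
The plan is to push the complex power inside using the semigroup identity and then reduce everything to the pointwise decay estimate for $g$ provided by Fact~\ref{f:semigroup}. First I would use that $\PG^\lambda_s\circ\PG^\lambda_k=\PG^\lambda_{k+s}$ (an immediate consequence of the semigroup property of $\PG_t$ together with $\PG^\lambda_t=e^{\lambda t}\PG_t$) to rewrite $\PG^\lambda_s g_k = g_{k+s}$, so that the defining integral becomes
\begin{equation*}
Z_{g_k}(x)=\calka{0}{1}\PG^\lambda_{1-s}\kbr{\lambda(ig_{k+s}(\cdot))^{1+\beta}}(x)\,\text{d}s.
\end{equation*}
This is the key reduction: the integrand no longer involves nested semigroups, only iterates of $g$ evaluated at time $k+s$.

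Next, Fact~\ref{f:semigroup} applied to $g$ gives some $R_0\in\pspace$ with $|\PG_tg(x)|\leq e^{-\kappa(g)\mu t}R_0(x)$, so $|g_{k+s}(x)|\leq e^{(\lambda-\kappa(g)\mu)(k+s)}R_0(x)$. Because $g_{k+s}$ is real-valued, $ig_{k+s}(x)$ lies on the imaginary axis, which is inside the domain of analyticity $D$ fixed in the remark after Fact~\ref{f:fouriertransform}; by the branch choice there, $\mbr{(ig_{k+s}(x))^{1+\beta}}=|g_{k+s}(x)|^{1+\beta}$. Hence pointwise
\begin{equation*}
\mbr{\lambda(ig_{k+s}(x))^{1+\beta}}\leq \lambda\, e^{(\lambda-\kappa(g)\mu)(1+\beta)(k+s)}R_0(x)^{1+\beta}.
\end{equation*}
Pulling the modulus inside $\PG^\lambda_{1-s}$ is legitimate because $\PG^\lambda_{1-s}$ is positive on real functions, so $|\PG^\lambda_{1-s}F|\leq \PG^\lambda_{1-s}|F|$ for complex-valued $F$. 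Since $R_0^{1+\beta}\in\pspace$, applying Fact~\ref{f:semigroup} (or \eqref{e:tth} directly) yields $\PG^\lambda_{1-s}(R_0^{1+\beta})(x)\leq R_1(x)$ uniformly for $s\in[0,1]$ for some $R_1\in\pspace$, because $1-s$ varies in a bounded interval and the OU-semigroup preserves polynomial growth.

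Integrating over $s\in[0,1]$ contributes only the finite constant $\calka{0}{1}e^{(\lambda-\kappa(g)\mu)(1+\beta)s}\,\text{d}s$, giving the first claim with $R(x):=C\, R_1(x)\in\pspace$. For the second estimate I pair against $\varphi$: since $R\in\pspace$ and $\varphi$ has Gaussian tails, $\calka{\R^d}{}R(x)\varphi(x)\,\text{d}x<\infty$, and therefore
\begin{equation*}
|m_k[g]|\leq e^{-\lambda(k+1)}\calka{\R^d}{}\mbr{Z_{g_k}(x)}\varphi(x)\,\text{d}x\leq C\, e^{-\lambda(k+1)}e^{(\lambda-\kappa(g)\mu)(1+\beta)k}.
\end{equation*}
Collecting exponents, $(\lambda-\kappa(g)\mu)(1+\beta)-\lambda=\lambda\beta-\kappa(g)\mu(1+\beta)$, giving the claimed bound. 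The main subtlety — not really an obstacle — is handling the complex-valued integrand: one must justify moving $|\cdot|$ past both $\PG^\lambda_{1-s}$ and the $(1+\beta)$-power, and the branch of $z^{1+\beta}$ was chosen in the remark after Fact~\ref{f:fouriertransform} precisely so that $|z^{1+\beta}|=|z|^{1+\beta}$ holds on the relevant half-plane, which covers the purely imaginary argument here.
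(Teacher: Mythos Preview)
Your proof is correct and takes essentially the same approach as the paper, which simply says ``The assertion follows by~\eqref{e:gkestimation}.'' You have merely unpacked the details behind that one-line reference: the semigroup identity $\PG^\lambda_s g_k=g_{k+s}$, the pointwise bound $|g_{k+s}|\leq e^{(\lambda-\kappa(g)\mu)(k+s)}R_0$ coming from Fact~\ref{f:semigroup}, positivity of $\PG^\lambda_{1-s}$ to pass the modulus inside, and the exponent bookkeeping for $m_k[g]$.
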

\begin{proof}
The assertion follows by~\eqref{e:gkestimation}.
\end{proof}
Moreover, we need an improvement of Fact~\ref{f:semigroup} for functions $Z_{g_k}$
\begin{lemma}\label{l:semigroup}
There exists $R\in\mathcal{P}$ such that for any $s\geq 0$ and  $h_k(x) = Z_{g_k}(x)-\ddp{Z_{g_{k}}}{\varphi} $,  $k\in \cbr{0,\ldots,t}$ we have
\begin{equation}
\mbr{\PG_{s}h_k(x)}\leq e^{-\mu s}e^{(\lambda-\kappa(g)\mu)(1+\beta)k}R(x).\nonumber
\end{equation}
\end{lemma}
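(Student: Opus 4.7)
The fundamental observation is that $h_k$ is orthogonal to constants by construction: $\langle h_k, \varphi\rangle = \langle Z_{g_k}, \varphi\rangle - \langle Z_{g_k}, \varphi\rangle = 0$, so $\kappa(h_k)\geq 1$. Consequently the decay factor $e^{-\mu s}$ should come from Fact~\ref{f:semigroup}, whereas the factor $e^{(\lambda - \kappa(g)\mu)(1+\beta)k}$ reflects the size of $Z_{g_k}$ already extracted in Lemma~\ref{l:zgmk}. My plan is to separate these two contributions by a renormalization and then invoke a uniform-in-$k$ version of Fact~\ref{f:semigroup}.

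From Lemma~\ref{l:zgmk} we have $|Z_{g_k}(x)| \leq e^{(\lambda-\kappa(g)\mu)(1+\beta)k} R_0(x)$ for some $R_0 \in \mathcal{P}$ independent of $k$; integrating against $\varphi$ yields the same bound for $|\langle Z_{g_k},\varphi\rangle|$. Define $\tilde h_k := e^{-(\lambda - \kappa(g)\mu)(1+\beta)k} h_k$. The family $\{\tilde h_k\}$ then satisfies (i) $\langle \tilde h_k, \varphi\rangle = 0$, so $\kappa(\tilde h_k)\geq 1$, and (ii) $|\tilde h_k(x)| \leq R_1(x)$ for a fixed $R_1 \in \mathcal{P}$ independent of $k$. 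Applying Fact~\ref{f:semigroup} to each $\tilde h_k$ and multiplying back by $e^{(\lambda - \kappa(g)\mu)(1+\beta)k}$ would then yield the claimed bound.

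The main obstacle is ensuring that the polynomial $R$ produced by Fact~\ref{f:semigroup} can be chosen uniformly in $k$. This requires observing that in the argument of \cite[Lemma 2.4]{Ren2015} the resulting polynomial depends only on a polynomial majorant of the input, not on the input itself. Alternatively, one can argue spectrally: via Mehler's formula $\mathcal{T}_s \tilde h_k(x) = \sum_{|p|\geq 1} e^{-|p|\mu s} h_p(x)\langle \tilde h_k, h_p\rangle_\varphi$, pull out a factor $e^{-\mu s}$ and apply Cauchy--Schwarz, exploiting that the pointwise bound (ii) implies uniform $L^2(\varphi)$ boundedness of the family $\{\tilde h_k\}$; this handles $s\geq 1$. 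For $s\in[0,1]$ the estimate $|\mathcal{T}_s\tilde h_k(x)|\leq \mathcal{T}_s R_1(x)$ combined with $e^{-\mu s}\geq e^{-\mu}$ suffices. Once this uniformity is in place, the remainder of the proof is mere bookkeeping.
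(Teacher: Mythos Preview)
Your overall plan---normalize $h_k$ by $e^{(\lambda-\kappa(g)\mu)(1+\beta)k}$ and invoke a uniform-in-$k$ version of Fact~\ref{f:semigroup}---correctly isolates the two contributions, and you have identified the real issue: whether the polynomial $R$ in Fact~\ref{f:semigroup} can be taken to depend only on a common polynomial majorant of the family. However, neither workaround you offer actually closes this gap.

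Workaround (b) fails. After extracting $e^{-\mu s}$ and applying Cauchy--Schwarz you must control $\bigl(\sum_{|p|\geq 1} e^{-2(|p|-1)\mu s}\, h_p(x)^2\bigr)^{1/2}$, and by Mehler's formula
\[
\sum_{p} r^{|p|} h_p(x)^2 \;=\; (1-r^2)^{-d/2}\exp\!\Bigl(\tfrac{2\mu r}{\sigma^2(1+r)}\,|x|^2\Bigr),
\]
so even at $s=1$ this grows like $e^{c|x|^2}$ and lies outside $\mathcal{P}$. A uniform $L^2(\varphi)$ bound on $\{\tilde h_k\}$ is too weak to recover polynomial pointwise control.

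The paper does not rely on workaround (a); instead it argues via a direct gradient bound exploiting the explicit form of $Z_{g_k}$. Writing
\[
\PG_s h_k(x)=\int_{\R^d}\bigl(h_k(xe^{-\mu s}+y\sqrt{1-e^{-2\mu s}})-h_k(y)\bigr)\varphi(y)\,dy
\]
(valid since $\langle h_k,\varphi\rangle=0$) and applying the mean value theorem reduces matters to estimating $|\nabla Z_{g_k}|$. This gradient is computed from the integral defining $Z_{g_k}$: the chain rule on $(\,\cdot\,)^{1+\beta}$, the commutation $\partial_i\PG^\lambda_u=e^{-\mu u}\PG^\lambda_u\partial_i$, and Fact~\ref{f:differentiation} (which gives $\kappa(\partial_i\tilde g)\geq\kappa(g)-1$ for $\tilde g=\PG_1^\lambda g$) combine to yield $|\nabla Z_{g_k}(x)|\leq e^{(\lambda-\kappa(g)\mu)(1+\beta)k}R(x)$ with $R\in\mathcal{P}$ independent of $k$. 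Your route can be repaired---for instance by first smoothing $\tilde h_k$ via $\PG_{1/2}$ and bounding its gradient through the kernel as in the proof of Fact~\ref{f:cinfty}---but once fleshed out this is essentially the paper's argument one level of abstraction removed.
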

\begin{proof} Using \eqref{e:tth} and $\ddp{h_k}{\varphi}=0$ we may write
\begin{align}\label{e:tth1}
\PG_sh_k(x)&= \calka{\R^d}{}\rbr{h_k\rbr{xe^{-\mu s}+y\sqrt{1-e^{-2\mu s}}}-h_k(y) }\varphi(y)dy\nonumber \\
&\leq \calka{\R^d}{} \sup |\nabla h_k(z)| \mbr{xe^{-\mu s}+y\sqrt{1-e^{-2\mu s}}-y}\varphi(y)dy.
\end{align}
where supremum is taken over $z$ in the interval connecting $y$ with $xe^{-\mu s}+y\sqrt{1-e^{-2\mu s}}$.
It is easy to check that for any $f\in \pspace$ we have
$$\frac{\partial}{\partial x_i} \PG_s^{\lambda} f(x)= e^{-\mu s} \PG_s^{\lambda}\rbr{\frac{\partial f}{\partial x_i}}(x).$$
We set $\tilde{g}=\PG_1^\lambda g$. By Fact~\ref{f:cinfty} we know that $\mbr{\nabla \tilde{g}}\in\mathcal{P}$ and $\kappa(\tilde{g})=\kappa(g)$.  For $i\in \{ 1,\ldots ,d \}$ we compute
\begin{equation}
\frac{\partial}{\partial x_i} Z_{g_k} (x)=
\int\limits_0^1 e^{-\mu(1-u)}\PG_{1-u}^{\lambda}
\kbr{\lambda\rbr{\frac{\partial}{\partial x_i}\rbr{\rbr{i\PG ^\lambda_{u+k-1} \tilde{g}}^{1+\beta}}}}(x) \text{d}u.\nonumber
\end{equation}
Further we calculate
$$\frac{\partial}{\partial x_i}\rbr{\rbr{i\PG ^\lambda_{u+k-1} \tilde{g}}^{1+\beta}}=(1+\beta)\rbr{i\PG ^\lambda_{u+k-1} \tilde{g}}^{\beta} ie^{-\mu(u+k-1)}\PG ^\lambda_{u+k-1} \frac{\partial  \tilde{g}}{\partial x_i}.$$
Using Fact~\ref{f:differentiation} we get that $\kappa\rbr{\frac{\partial  \tilde{g}}{\partial x_i}}\geq \kappa(\tilde{g})-1$, which combined with Fact~\ref{f:semigroup} yields
$$\mbr{\rbr{i\PG ^\lambda_{u+k-1} \tilde{g}}^{\beta}(x)}\leq R_1(x)e^{(\lambda-\kappa(g)\mu)\beta (u+k-1)},\mbr{\PG ^\lambda_{u+k-1} \frac{\partial  \tilde{g}}{\partial x_i}(x)}\leq R_1(x)e^{(\lambda-(\kappa(g)-1)\mu) (u+k-1)}$$
 for some  $R_1\in\mathcal{P}$. As a consequence for some $R_2 \in \pspace$ we get
$$\mbr{\frac{\partial}{\partial x_i} h_k(x)} = \mbr{\frac{\partial}{\partial x_i} Z_{g_k} (x)}\leq R_2(x) e^{(\lambda-\kappa(g)\mu)(1+\beta) k}.$$
Hence we get that for $z$ in the interval connecting $y$ with $xe^{-\mu s}+\sqrt{1-e^{-2\mu s}}y$ we have $\mbr{\nabla h_k(z)}\leq (R_3(x)+R_3(y))e^{(\lambda-\kappa(g)\mu)(1+\beta) k}$ for some $R_3 \in \pspace$. Observe also that\\
$\mbr{xe^{-\mu s}+y\rbr{\sqrt{1-e^{-2\mu s}}-1}}\leq e^{-\mu s}\rbr{R_4(x)+R_4(y)}$ for some $R_4\in\pspace$. Now using~\eqref{e:tth1} we get
$$\mbr{\PG_s h_k(x)}\leq \calka{\R^d}{} \mbr{R_5(x)+R_5(y)} e^{(\lambda-\kappa(g)\mu)(1+\beta) k} e^{-\mu s}\varphi(y)dy\leq
R_6(x)e^{-\mu s} e^{(\lambda-\kappa(g)\mu)(1+\beta) k},$$
for some $R_5, R_6 \in \pspace$. This finishes the proof for $k\geq 1$. The thesis for $k=0$ follows directly from Fact~\ref{f:semigroup}.
\end{proof}

As a corollary we state
\begin{lemma}\label{l:lln}
There exist $C,p>0$ such that for any  $h_k(x) = Z_{g_k}(x)-\ddp{Z_{g_{k}}}{\varphi} $,  $k\in \cbr{0,\ldots,t}$ we have
\begin{equation}\label{e:lln}
\E\mbr{\ddp{X_t}{h_k}}\leq \nbr{\ddp{X_t}{h_k}}_{L^{1+\beta/2}}\leq Ce^{(\lambda-p)t}e^{(\lambda-\kappa(g)\mu)(1+\beta) k}.
\end{equation}
Moreover, as a consequence we get that for any $\varepsilon \in (0, \lambda)$
\begin{equation}\label{eq:lln}
\E\mbr{\rbr{\frac{\ddp{X_t}{Z_{g_k}}}{|X_t|}-\ddp{Z_{g_{k}}}{\varphi}}\indyk_{|X_t|\geq e^{(\lambda-\varepsilon)t}}}\leq Ce^{-(p-\varepsilon)t} e^{(\lambda-\kappa(g)\mu)(1+\beta) k}.
\end{equation}

\end{lemma}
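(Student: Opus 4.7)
The plan is to derive both estimates as immediate consequences of Lemma~\ref{l:crucial_new}, once its hypothesis~\eqref{e:strange_assumption} is verified for $h_k$. The key observation is that Lemma~\ref{l:semigroup} is \emph{already} that hypothesis: it states $|\PG_s h_k(x)|\leq e^{-\mu s}e^{(\lambda-\kappa(g)\mu)(1+\beta)k}R(x)$, so $h_k$ satisfies~\eqref{e:strange_assumption} with decay index $\kappa=1$ and prefactor $C_{h_k}=e^{(\lambda-\kappa(g)\mu)(1+\beta)k}$.

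Next I would apply Lemma~\ref{l:crucial_new} with $\gamma=\beta/2\in(0,\beta)$ and $\kappa=1$, which requires comparing $(\beta/2)\lambda$ with $(1+\beta/2)\mu$. The three regimes produce, respectively, a bound proportional to $e^{(\lambda-\mu)t}$ (so $p=\mu$ works), $te^{\lambda t/(1+\beta/2)}$ (borderline case), or $e^{\lambda t/(1+\beta/2)}=e^{(\lambda-\lambda\beta/(2+\beta))t}$ (so $p=\lambda\beta/(2+\beta)>0$ works). In the borderline regime the polynomial prefactor $t$ is absorbed by sacrificing an arbitrarily small amount of exponential decay. In every case one obtains
$$\nbr{\ddp{X_t}{h_k}}_{L^{1+\beta/2}}\leq Ce^{(\lambda-p)t}e^{(\lambda-\kappa(g)\mu)(1+\beta)k}$$
for some $p>0$ independent of $k,t$, and the $L^1$ bound follows from Jensen's inequality. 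This proves~\eqref{e:lln}.

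For the consequence~\eqref{eq:lln} I would use that $\varphi$ is a probability density, hence $\ddp{X_t}{h_k}=\ddp{X_t}{Z_{g_k}}-|X_t|\ddp{Z_{g_{k}}}{\varphi}$, so the integrand in~\eqref{eq:lln} is exactly $|X_t|^{-1}|\ddp{X_t}{h_k}|$. On the event $\{|X_t|\geq e^{(\lambda-\varepsilon)t}\}$ the reciprocal is deterministically at most $e^{-(\lambda-\varepsilon)t}$; pulling this factor outside the expectation and invoking the $L^1$ estimate just established yields the claimed $Ce^{-(p-\varepsilon)t}e^{(\lambda-\kappa(g)\mu)(1+\beta)k}$. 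The only delicate point is the borderline case of Lemma~\ref{l:crucial_new}, where the factor $t$ forces one to shrink $p$ by an arbitrary positive amount; everything else is substitution.
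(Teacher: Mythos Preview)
Your proof is correct and follows essentially the same route as the paper: verify hypothesis~\eqref{e:strange_assumption} for $h_k$ via Lemma~\ref{l:semigroup}, then apply Lemma~\ref{l:crucial_new} with $\gamma=\beta/2$, and finally deduce~\eqref{eq:lln} by bounding $|X_t|^{-1}$ on the indicated event. Your identification of the decay index as $\kappa=1$ is the right reading of Lemma~\ref{l:semigroup} (the paper's text records $\kappa=\kappa(g)$, which appears to be a slip, since Lemma~\ref{l:semigroup} only provides decay $e^{-\mu s}$); your explicit case split and handling of the borderline polynomial factor are likewise sound and simply make explicit what the paper leaves implicit.
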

\begin{proof}
By Fact~\ref{f:semigroup} we know that $h_k$ satisfies~\eqref{e:strange_assumption} with $C_{h_k}=e^{(\lambda-\kappa(g)\mu)(1+\beta) k}$ and $\kappa=\kappa(g)$. We also put $\gamma= \beta/2$. Now~\eqref{e:lln} follows by Lemma~\ref{l:crucial_new}. Using \eqref{e:lln} one can obtain \eqref{eq:lln} by simple calculations.
\end{proof}

We want to approximate the characteristic function of $M_k^t[g]$. It is the sum of terms $\Delta_{t-k}[g_k;u] = a(X_{t-k-1}(u))$, where $a(x) = \ddp{X_{1}^x}{g_k}-\PG_{1}^{\lambda}g_k(x)$. In the following lemma we focus on the characteristic function of $a(x)$, which will be the key ingredient in the subsequent crucial Lemma \ref{l:jointConvergence}.

\begin{lemma}\label{l:singlesubsystem}
There exists $R\in\mathcal{P}$ and $p>1+\beta$ such that for any $k\n$ we have
$$\E e^{i\theta(\ddp{X_{1}^x}{g_k}-\PG_{1}^{\lambda}g_k(x))}=(1+Z_{g_k}(x;\theta))+err(x,\theta)$$
and $|err(x,\theta)|\leq \mbr{\theta e^{(\lambda -\kappa(g)\mu)k}}^{p}R(x)$.
\end{lemma}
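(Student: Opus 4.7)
The plan is to derive the expansion from the differential/integral equation for the characteristic function established in Fact~\ref{f:fouriertransform}, writing $u(x,t,\theta):=\E_x\exp(i\theta\langle X_t,g_k\rangle)$ and working with its complement $v=1-u$. Using $F(s)=ms-(m-1)+(m-1)(1-s)^{1+\beta}$ and $\lambda=a(m-1)$ the evolution equation becomes
\begin{equation*}
\partial_t v = Lv+\lambda v-\lambda v^{1+\beta},\qquad v(x,0,\theta)=1-e^{i\theta g_k(x)},
\end{equation*}
which, after Duhamel, gives
\begin{equation*}
v(x,1,\theta)=\PG_1^{\lambda}\!\left(1-e^{i\theta g_k}\right)\!(x)-\int_0^1 \PG_{1-s}^{\lambda}\bigl[\lambda v(\cdot,s,\theta)^{1+\beta}\bigr](x)\,ds.
\end{equation*}
Here $v^{1+\beta}$ is well defined by the branch discussed after Fact~\ref{f:fouriertransform}, since $\mathrm{Re}(1-u)\geq 0$.

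The first step is to identify the main terms of order $\theta$ and $\theta^{1+\beta}$. From the Taylor expansion $1-e^{i\theta g_k}=-i\theta g_k+O(\theta^2 g_k^2)$ and linearity of $\PG_s^{\lambda}$ one obtains
$$V_0(x,s,\theta):=\PG_s^{\lambda}(1-e^{i\theta g_k})(x)=-i\theta\,\PG_s^{\lambda}g_k(x)+\rho_0(x,s,\theta),$$
with $\rho_0$ controlled by $\theta^2\PG_s^{\lambda}(g_k^2)(x)$. Replacing $v$ inside the Duhamel integral by $-i\theta\,\PG_s^{\lambda}g_k$ produces precisely $Z_{g_k}(x,\theta)$ up to sign, since by definition
$$Z_{g_k}(x,\theta)=\int_0^1\PG_{1-s}^{\lambda}\!\bigl[\lambda(\PG_s^{\lambda}(i\theta g_k))^{1+\beta}\bigr](x)\,ds.$$
Carrying signs through and combining with $V_0$ yields the heuristic identity $v(x,1,\theta)\approx -i\theta\,\PG_1^{\lambda}g_k(x)-Z_{g_k}(x,\theta)$, so that after multiplying by $e^{-i\theta\PG_1^{\lambda}g_k(x)}$ and expanding the exponential, the linear-in-$\theta$ term cancels and we obtain $1+Z_{g_k}(x,\theta)+err(x,\theta)$, which is the assertion.

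The second step is quantitative control of the remainder. Three sources of error must be aggregated and each bounded by a $p$th power with $p>1+\beta$ of the natural small parameter $|\theta|e^{(\lambda-\kappa(g)\mu)k}$ (recall $|\PG_s^{\lambda}g_k(x)|\le e^{(\lambda-\kappa(g)\mu)(k+s)}R(x)$ by Fact~\ref{f:semigroup} and \eqref{e:gkestimation}): (i) the Taylor error $|1-e^{i\theta g_k}+i\theta g_k|\le C\theta^2 g_k^2$, propagated by $\PG^{\lambda}$, yields a contribution of order $\theta^2$; (ii) the replacement of $v^{1+\beta}$ by $V_0^{1+\beta}$ inside the Duhamel integral is controlled via the Hölder-type bound~\eqref{e:1+beta}, which reduces the task to bounding $|v-V_0|$ times $(|v|^\beta+|V_0|^\beta)$; (iii) the replacement of $V_0^{1+\beta}$ by $(-i\theta\PG_s^{\lambda}g_k)^{1+\beta}$ uses~\eqref{e:1+beta} again together with $|\rho_0|\le C\theta^2\PG_s^{\lambda}(g_k^2)$. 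To close (ii) one first shows the a priori bound $|v(x,s,\theta)|\le C|\theta|\,\PG_s^{\lambda}|g_k|(x)+C|\theta|^{1+\beta}\PG_s^{\lambda}(|g_k|^{1+\beta})(x)$ by iterating the Duhamel formula once, which is enough to plug into~\eqref{e:1+beta}. Finally one multiplies through by $e^{-i\theta\PG_1^{\lambda}g_k}$ and uses $|e^{iy}-1-iy|\le y^2/2$ to transfer the quadratic cancellation between the linear-in-$\theta$ terms.

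The delicate point is the choice of $p$: each of the errors (i)--(iii) naturally has exponent $2$ in $\theta$, but the accompanying powers of $e^{(\lambda-\kappa(g)\mu)k}$ differ. One has to pick $p\in(1+\beta,2)$ and use $|\theta|^2 \leq |\theta|^p (|\theta|e^{(\lambda-\kappa(g)\mu)k})^{2-p} e^{-(2-p)(\lambda-\kappa(g)\mu)k}$-type interpolations so that all contributions are uniformly bounded by the single expression $|\theta e^{(\lambda-\kappa(g)\mu)k}|^p R(x)$ with some $R\in\pspace$ absorbing the polynomial growth produced by the semigroup. I expect this bookkeeping of exponents, together with the care needed to keep the complex branch of $z^{1+\beta}$ under control throughout the Duhamel iteration, to be the main technical obstacle; once the exponent matching is arranged, all remaining estimates reduce to Fact~\ref{f:semigroup}, \eqref{e:gkestimation}, and \eqref{e:1+beta}.
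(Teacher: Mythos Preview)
Your approach is essentially the paper's: the same Duhamel representation for $1-u$, the same three error sources controlled via \eqref{e:1+beta}, and the same final cancellation after multiplying by $e^{-i\theta\PG_1^\lambda g_k(x)}$. Two points where you make life harder than necessary. First, the a priori bound on $v$ follows in one line from the probabilistic representation, $|v(x,s,\theta)|=|\E_x(1-e^{i\theta\langle X_s,g_k\rangle})|\le|\theta|\,\E_x\langle X_s,|g_k|\rangle=|\theta|\,\PG_s^\lambda|g_k|(x)$, so no Duhamel iteration is needed. Second, your last paragraph misidentifies the obstacle: because $|g_k|\le e^{(\lambda-\kappa(g)\mu)k}R$ pointwise by \eqref{e:gkestimation}, every factor of $\theta$ in each error term arrives paired with exactly one factor of $e^{(\lambda-\kappa(g)\mu)k}$, so all three errors are already pure powers of $\theta_k:=\theta e^{(\lambda-\kappa(g)\mu)k}$, namely $|\theta_k|^2$, $|\theta_k|^{1+2\beta}$, and $|\theta_k|^{2+\beta}$. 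No interpolation is required; under the harmless restriction $|\theta_k|\le 1$ one simply takes $p=\min(2,1+2\beta)>1+\beta$.
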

\begin{proof}
We denote $\varphi(x,\theta):=\E e^{i\theta(\ddp{X_{1}^x}{g_k}-\PG_{1}^{\lambda}g_k(x))}$ and  set $w(x,t,\theta) :=\E e^{i\theta(\ddp{X_{t}^x}{g_k}}$. Using Fact~\ref{f:fouriertransform} and substituting the generating function $F(s)=ms-(m-1)+(m-1)(1-s)^{1+\beta}$ we get that
$$\frac{d}{dt}w(x,t)=Lw(x,t)+a(m-1)\left((w(x,t)-1)+(1-w(x,t))^{1+\beta}\right),\ w(x,0,\theta)=e^{i\theta g_k(x)}.$$
Recall $\lambda=a(m-1)$ and let us define $u(x,t)=1-w(x,t)$. It fulfills
\begin{equation}
\frac{d}{dt}u(x,t)=Lu(x,t)+\lambda u(x,t)-\lambda(u(x,t))^{1+\beta}, u(x,0,\theta)=1-e^{i\theta g_k(x)}.\nonumber
\end{equation}
In the integral form this writes as
\begin{equation}\label{e:u}
u(x,t,\theta)=\PG_t^{\lambda}\kbr{1-e^{i\theta g_k(\circ)}}(x)-\lambda\calka{0}{t}\PG_{t-s}^{\lambda}\kbr{u(\circ,s,\theta)^{1+\beta}}(x) \text{d}s.
\end{equation}
Using $u$ we represent $\varphi$ as follows
$$\varphi(x,\theta)=(1-u(x,1, \theta))\cdot e^{-i\theta \PG_1^{\lambda}g_k(x)}.$$
We also need
$$\tilde{v}(x,s,\theta) := \PG_s^{\lambda} \kbr{1-e^{i\theta g_k(\circ)}}(x)\hbox{ and } v(x,s,\theta) := i\theta \PG_s^{\lambda}  g_k(x).$$
Given a function $z$ we define
\begin{equation}\label{eq:I}
	I(z) := \lambda\calka{0}{1}\PG_{1-s}^{\lambda}\kbr{z(\circ,s,\theta)^{1+\beta}}(x) \text{d}s.
\end{equation}
Having set the notation we are ready to present the outline of the proof. In the outline $R_1, R_2, \ldots \in \mathcal{P}$ and they may vary from line to line. We denote $\theta_k =\theta e^{(\lambda-\kappa(g)\mu)k} $ and implicitly assume $|\theta_k|\leq 1$.

\begin{enumerate}[1.]
\item We bound $u, v$ and $\tilde{v}$ from above by $|\theta_k| R_1(x)$.
\item From step 1, \eqref{e:u} and \eqref{eq:I} we deduce that $|u-\tilde{v}| = |I(u)| \leq |\theta_k|^{1+\beta}R_2(x)$ and $|v-\tilde v|\leq  |\theta_k|^2R_2(x)$.
\item We estimate  $|I(u) - I(\tilde v)|\leq |\theta_k|^{1+2\beta} R_3(x) $, $|I(\tilde v) - I(v)|\leq  |\theta_k|^{2+\beta} R_3(x) $. Consequently also  $|I(u) - I( v)|\leq |\theta_k|^{1+2\beta} R_3(x) $.
\item We prove that $|I(u) - (1-\varphi(x,\theta))|\leq |\theta_k|^2 R_4(x)$.
\item We combine the previous steps (observing that $I(v)=Z_{g_k}$) to conclude the proof.
\end{enumerate}
Now we implement the step outlined in the sketch.
\begin{enumerate}[1.]
\item Using \eqref{e:gkestimation}, the elementary inequality  $|1-e^{iy}|\leq |y|$ valid for $y\in\R$ and fact that $|\PG_t h|\leq\PG_t|h|$ we obtain that for $s\leq 1$
\begin{equation}\label{e:(1)tv}
|\tilde{v}(x,s,\theta)|=\mbr{\PG_s^{\lambda} [1-e^{i\theta g_k(\circ)}](x)}\leq \mbr{\theta \PG_s^{\lambda} |g_k|(x)}\leq \mbr{\theta e^{(\lambda-\kappa(g)\mu)k}}R_1(x).
\end{equation}
and similarly
\begin{equation}\label{e:(1)v}
|v(x,s,\theta)|\leq \mbr{\theta e^{(\lambda-\kappa(g)\mu)k}}R_1(x).
\end{equation}
From defintions of $u,w$ we have $u = 1-w =\E_x\rbr{1-e^{i\theta \ddp{X_s}{g_k}}}$, hence
\begin{align}\label{e:(1)u}
|u(x,s,\theta)|&=\mbr{\E_x\rbr{1-e^{i\theta \ddp{X_s}{g_k}}}}\leq \E_x\mbr{1-e^{i\theta \ddp{X_s}{g_k}}}
\leq |\theta|\E_x \mbr{\ddp{X_s}{g_k}}\leq |\theta|\E_x \ddp{X_s}{|g_k|}\nonumber \\
&=|\theta|\rbr{\PG_s^{\lambda}|g_k|(x)}\leq \mbr{\theta e^{(\lambda-\kappa(g)\mu)k}}R_1(x).
\end{align}
In the last but one step we used \eqref{e:manytoone}.
\item We use \eqref{e:(1)u} to get
\begin{align}\label{e:(2)u-v}
|u(x,s,\theta)-\tilde{v}(x,s,\theta)|
&=|\lambda|\mbr{\calka{0}{1}\PG_{1-s}^{\lambda}[u(\circ,s,\theta)^{1+\beta}](x) \text{d}s} \leq |\lambda|\calka{0}{1}\PG_{1-s}^{\lambda}[\mbr{u(\circ,s,\theta)}^{1+\beta}](x) \text{d}s \nonumber\\
&\leq \mbr{\theta e^{(\lambda-\kappa(g)\mu)k}}^{1+\beta}|\lambda|\mbr{\calka{0}{1}\PG_{1-s}^{\lambda}[R_1(\circ)^{1+\beta}](x) \text{d}s}\nonumber\\
&\leq \mbr{\theta e^{(\lambda-\kappa(g)\mu)k}}^{1+\beta}R_2(x).
\end{align}
We use the elementary inequality $|1-e^{iy}-iy|\leq 2|y|^2$ valid for $y\in \R$ and \eqref{e:gkestimation} to obtain
\begin{equation}\label{e:(3)tv-v}
\mbr{\tilde{v}(x,\theta)-v(x,\theta)}=\mbr{\PG_s^{\lambda} [e^{i\theta g_k(\circ)}-i\theta g_k(\circ)](x)}\leq C|\theta|^2\PG_s^{\lambda} g_k^2(x)\leq \mbr{\theta e^{(\lambda-\kappa(g)\mu)k}}^{2}R_2(x).
\end{equation}

\item Using~inequality~\eqref{e:1+beta} and then \eqref{e:(2)u-v},~\eqref{e:(1)tv},~\eqref{e:(1)u} we calculate
\begin{align}\label{e:(2)Iu-Iv}
\mbr{I(u)-I(\tilde{v})}
&=|\lambda|\mbr{\calka{0}{1} \PG_{1-s}^{\lambda}\rbr{\left(u(\circ,s,\theta)^{1+\beta}-\tilde{v}(\circ ,s,\theta)^{1+\beta}\right)}(x) \text{d}s}\nonumber\\
&\leq C_1\mbr{\calka{0}{1} \PG_{1-s}^{\lambda}\rbr{|u-\tilde{v}|\cdot \rbr{|u|^{\beta}+|\tilde{v}|^{\beta}}}(x) \text{d}s}\leq \mbr{\theta e^{(\lambda-\kappa(g)\mu)k}}^{1+2\beta}R_3(x),
\end{align}
for some $C_1 >0$. Similarly using~\eqref{e:(3)tv-v} instead of \eqref{e:(2)u-v} and \eqref{e:(1)v} instead of \eqref{e:(1)u} one obtains
\begin{equation}\label{e:(3)Itv-Iv}
\mbr{I(v)-I(\tilde{v})}\leq 	\mbr{ \theta e^{(\lambda-\kappa(g)\mu)k}}^{2+\beta}R_3(x).
\end{equation}

\item We recall that $I(u)=u(x,\theta)-\tilde{v}(x,\theta)$, ${v}(x,\theta) = i\theta \PG_1^{\lambda}g_k(x)$ hence
\begin{align*}
\varphi(x,\theta)-1+I(u)&=(1-u(x,\theta))\cdot e^{-v(x,\theta)}-1+I(u)\\
&=\rbr{e^{-v(x,\theta)}-\tilde{v}(x,\theta)-1}+u(x,\theta)\rbr{1-e^{-v(x, \theta)}}.
\end{align*}
To deal with the first summand, we use the elementary inequality $|1-e^{-iy}+iy|\leq 2|y|^2$ valid for $y\in \R$ and then \eqref{e:(1)v}, \eqref{e:(3)tv-v} to estimate
$$|e^{-v(x,\theta)}-\tilde{v}(x,\theta)-1|\leq |e^{-v(x,\theta)}-v(x,\theta)-1|+|\tilde{v}(x,\theta)-v(x,\theta)|\leq \mbr{\theta e^{(\lambda-\kappa(g)\mu)k}}^{2}R_4(x).$$
The estimate of the second summand follows by \eqref{e:(1)u} and \eqref{e:(1)v}:
$$\mbr{u(x,\theta)\rbr{1-e^{-v(x,\theta)}}}\leq \mbr{\theta e^{(\lambda-\kappa(g)\mu)k}}^{2}R_4(x).$$
We conclude using the triangle inequality
\begin{equation}\label{e:(4)phi}
\mbr{\varphi(x,\theta)-1+I(u)}\leq \mbr{\theta e^{(\lambda-\kappa(g)\mu)k}}^{2}R_4(x).
\end{equation}
\item The previous steps prove $\varphi(x,\theta) \approx 1+Z_{g_k}(x, \theta) =  1-I(v)$. Formally,
\begin{align*}
\mbr{\varphi(x,\theta)-1-Z_{g_k}(x,\theta)}&=\mbr{\varphi(x,\theta)-1+I(v)}\\
&\leq \mbr{\varphi(x,\theta)-1+I(u)}+\mbr{I(u)-I(\tilde{v})}+\mbr{I(\tilde{v})-I(v)}\nonumber
\end{align*}
Combining~\eqref{e:(2)u-v},~\eqref{e:(3)Itv-Iv} and~\eqref{e:(4)phi} we get
\begin{equation*}
	\mbr{\varphi(x,\theta)-1-Z_{g_k}(x,\theta)} \leq R(x)|\theta e^{(\lambda-\kappa(g)\mu)k}|^p
\end{equation*}
for some $R\in \mathcal{P}$ and $p=\min(2,1+2\beta,2+\beta)$. Obviously $p>1+\beta$ and the proof is concluded.

\end{enumerate}
\end{proof}

We leverage the previous lemma, which enables us to study the law of a single $M_k^t[g]$, to joint distribution. By \eqref{e:decomposition} this will be enough to study the convergence of $\ddp{X_t}{g}$. Recall also $m_k[g]$ given by \eqref{eq:mkg}.

\begin{lemma}\label{l:jointConvergence}
  Let $\Theta>0$ and assume that $\lambda\beta\leq \kappa(g)\mu(1+\beta)$. There exists $C,\delta>0$ such that for any $n\in\cbr{0,1,\ldots,t}$ and  $(\theta _0,\theta_1,\ldots,\theta_n)\in \R^{n+1}$ satisfying $0\leq \theta_i\leq \Theta$  we have
$$\mbr{\E\kbr{\exp\rbr{i\suma{k}{0}{n}\rbr{\theta_k \frac{M_k^t[g]}{\rbr{e^{\lambda k} |X_{t-k-1}|}^{\pb}}}}}-\prd{k}{0}{n}\exp\rbr{\theta_k^{1+\beta}e^{\lambda}m_k[g]}}\leq Ce^{-\delta (t-n)}.$$
\end{lemma}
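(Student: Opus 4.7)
The plan is to proceed by iterated conditioning, peeling off one summand $\theta_k M_k^t[g]/F_k$ at a time starting from $k=0$. The key structural fact is that $M_j^t[g]$ for $j>k$ is $\mathcal{F}_{t-k-1}$-measurable, while conditionally on $\mathcal{F}_{t-k-1}$ the random variable $M_k^t[g]=\sum_{u=1}^{|X_{t-k-1}|}\Delta_{t-k}[g_k;u]$ is a sum of independent centered random variables. Writing $L_j:=\exp\bigl(i\sum_{\ell=j}^{n}\theta_\ell M_\ell^t[g]/F_\ell\bigr)$, $F_k=(e^{\lambda k}|X_{t-k-1}|)^{1/(1+\beta)}$ and $c_k:=\exp(\theta_k^{1+\beta}e^\lambda m_k[g])$, a straightforward telescoping identity gives
\begin{equation*}
\E[L_0]-\prod_{k=0}^{n}c_k=\sum_{k=0}^{n}\Bigl(\prod_{j=0}^{k-1}c_j\Bigr)\,\E\bigl[L_{k+1}R_k\bigr],\qquad R_k:=\E\bigl[e^{i\theta_k M_k^t[g]/F_k}\mid\mathcal{F}_{t-k-1}\bigr]-c_k.
\end{equation*}
Using $|L_{k+1}|=1$ together with $|c_j|\leq 1$ (which holds since $(i\cdot\mathbb R)^{1+\beta}$ has non-positive real part, forcing $\mathrm{Re}(m_j[g])\leq 0$), the problem reduces to proving $\E|R_k|\leq Ce^{-\delta(t-k)}$ uniformly in $k$; the lemma is trivial for $t-n$ bounded by a constant, so I will work under the implicit assumption that $t-n$ is large.

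To bound $\E|R_k|$, I invoke conditional independence to write
\begin{equation*}
\E\bigl[e^{i\theta_k M_k^t[g]/F_k}\mid\mathcal{F}_{t-k-1}\bigr]=\prod_{u=1}^{|X_{t-k-1}|}\varphi_{g_k}\!\left(X_{t-k-1}(u),\theta_k/F_k\right),
\end{equation*}
where Lemma~\ref{l:singlesubsystem} expands each factor as $1+Z_{g_k}(x)(\theta_k/F_k)^{1+\beta}+\mathit{err}_{g_k}(x,\theta_k/F_k)$. I restrict to the good event $\mathcal{E}_{t-k-1}(\varepsilon,M)$ from \eqref{e:et_event}, whose complement has probability at most $Ce^{-p(t-k-1)}$ by Fact~\ref{f:maximumestimation}; on its complement $|R_k|\leq 2$, contributing an acceptable $Ce^{-p(t-k-1)}$. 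On the good event $|X_{t-k-1}|\geq e^{(\lambda-\varepsilon)(t-k-1)}$ and $|X_{t-k-1}(u)|\leq M(t-k-1)$, hence using $F_k^{1+\beta}=e^{\lambda k}|X_{t-k-1}|$ together with the polynomial bounds for $Z_{g_k}$ and $\mathit{err}_{g_k}$, each single-particle summand is of order $\mathrm{poly}(t)/|X_{t-k-1}|$, comfortably below the threshold $|X_{t-k-1}|^{-3/4}$ required by Fact~\ref{f:exp}. That fact then converts the product into $\exp\bigl((\theta_k/F_k)^{1+\beta}\ddp{X_{t-k-1}}{Z_{g_k}}\bigr)$, with an additive error that is exponentially small in $t-k-1$ (coming both from the $1/\sqrt{N}$ term of Fact~\ref{f:exp} and from the aggregated $\mathit{err}_{g_k}$ contributions, which use $p>1+\beta$ to beat the factor of $|X_{t-k-1}|$).

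It then remains to compare the argument of the exponential, namely
\begin{equation*}
(\theta_k/F_k)^{1+\beta}\ddp{X_{t-k-1}}{Z_{g_k}}=\theta_k^{1+\beta}e^{-\lambda k}\,\ddp{X_{t-k-1}}{Z_{g_k}}/|X_{t-k-1}|,
\end{equation*}
with $\theta_k^{1+\beta}e^{\lambda}m_k[g]=\theta_k^{1+\beta}e^{-\lambda k}\ddp{Z_{g_k}}{\varphi}$; this is precisely the law-of-large-numbers discrepancy controlled by Lemma~\ref{l:lln}, which on the good event gives an $L^1$ bound of order $e^{-(p-\varepsilon)(t-k-1)}e^{(\lambda-\kappa(g)\mu)(1+\beta)k}$. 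After multiplying by the external $e^{-\lambda k}$, the exponential in $k$ becomes $e^{(\lambda\beta-\kappa(g)\mu(1+\beta))k}\leq 1$, precisely under the standing hypothesis $\lambda\beta\leq\kappa(g)\mu(1+\beta)$. Composing with the Lipschitz continuity of $z\mapsto e^z$ on the bounded range of the exponent (uniform in $\theta_k\leq\Theta$) and summing all errors over $k=0,\dots,n$ as a geometric series yields the claimed $Ce^{-\delta(t-n)}$.

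The delicate point I expect to require the most care is exactly this final balancing: the factor $e^{(\lambda-\kappa(g)\mu)(1+\beta)k}$ growing with $k$ (reflecting the potential explosion of $Z_{g_k}$ in the supercritical regime) is cancelled exactly by the $e^{-\lambda k}$ coming from the chosen normalization $F_k$, and this cancellation is non-strict in the critical case $\lambda\beta=\kappa(g)\mu(1+\beta)$, so the constants need to be tracked carefully to ensure the geometric sum still closes. The argument accordingly breaks down in the supercritical case $\lambda\beta>\kappa(g)\mu(1+\beta)$, which is consistent with the qualitatively different behaviour announced in Remark~\ref{r:intuitive}.
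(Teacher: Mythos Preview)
Your approach is essentially the paper's: telescope to reduce to a single-step bound $\E|R_k|$, restrict to the good event $\mathcal{E}_{t-k-1}$, expand each factor via Lemma~\ref{l:singlesubsystem}, apply Fact~\ref{f:exp} to pass from the product to an exponential, and finish with the law of large numbers of Lemma~\ref{l:lln}. The paper merely reverses the order of presentation (single-step analysis first, telescoping last), and your direct verification of $|c_j|\le 1$ via $\mathrm{Re}\,(iy)^{1+\beta}\le 0$ is a clean alternative to the paper's ``it is a characteristic function'' remark.

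There is, however, one genuine gap. When you invoke Fact~\ref{f:exp} with $a=L_{t,k}:=(\theta_k/F_k)^{1+\beta}\ddp{X_{t-k-1}}{Z_{g_k}}$, the error bound in that fact carries a prefactor $e^{2|a|}$. On the event $\mathcal{E}_{t-k-1}$ alone one only has $|L_{t,k}|\le \Theta^{1+\beta}e^{(\lambda\beta-\kappa(g)\mu(1+\beta))k}Q(t-k-1)$ for a genuine polynomial $Q$ (arising from $R(X_{t-k-1}(u))$ with $|X_{t-k-1}(u)|\le M(t-k-1)$), so under the standing hypothesis $|L_{t,k}|\le \Theta^{1+\beta}Q(t-k-1)$. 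Thus $e^{2|a|}$ can be of order $e^{c(t-k)^d}$, which overwhelms the $C/\sqrt{N}$ and aggregated $\mathit{err}$ terms; your claim that the additive error is ``exponentially small in $t-k-1$'' is therefore not justified, and the same obstruction hits your Lipschitz step. The paper fixes this by first using Lemma~\ref{l:lln} together with Markov's inequality to introduce a further high-probability event $\mathcal{A}_{t,k}$ on which $L_{t,k}$ lies within $Ce^{-p(t-k)/2}$ of the deterministic target $\theta_k^{1+\beta}e^{\lambda}m_k[g]$; since the latter is uniformly bounded by Lemma~\ref{l:zgmk}, one may then apply Fact~\ref{f:exp} with that deterministic $a$, so that $e^{2|a|}$ is a harmless constant. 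You should insert this intermediate event before appealing to Fact~\ref{f:exp}.
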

We stress that in the above lemma $n$ may depend on $t$ (which will be used in subsequent proofs). When it is fixed it implies
\newcommand{\ftk}{F_{t,k}}
\begin{cor}Let $n\n$ and assume that $\lambda\beta\leq \kappa(g)\mu(1+\beta)$. Then

$$\rbr{\frac{M_0^t[g]}{F_{t,0}},\frac{M_1^t[g]}{F_{t,1}},\ldots,\frac{M_n^t[g]}{F_{t,n}}}
\rightarrow^d(\zeta_{0},\zeta_{1},\ldots,\zeta_{n}),\quad \text{as }t\rightarrow+\infty.
$$

where $\cbr{\zeta_k}_{k\in{0,\ldots,n}}$ are independent $(1+\beta)$-stable random variables with the characteristic functions
$$\E e^{i\theta\zeta_{k}}=e^{\theta^{1+\beta} e^{\lambda}m_k[g]},$$
where $\ftk := \rbr{e^{\lambda k} |X_{t-k-1}|}^{\pb}$.
\end{cor}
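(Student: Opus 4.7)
The plan is to derive the corollary from Lemma \ref{l:jointConvergence} by letting $t\to\infty$ with the fixed $n$ from the statement, and then invoking the multivariate L\'evy continuity theorem. Since $n$ is held fixed while $t\to\infty$, the bound $Ce^{-\delta(t-n)}$ in Lemma \ref{l:jointConvergence} vanishes. Therefore, for every $\Theta>0$ and every $(\theta_0,\ldots,\theta_n)\in[0,\Theta]^{n+1}$,
\begin{equation*}
\lim_{t\to\infty}\E\exp\rbr{i\sum_{k=0}^{n}\theta_k \frac{M_k^t[g]}{F_{t,k}}} \;=\; \prod_{k=0}^{n}\exp\rbr{\theta_k^{1+\beta} e^{\lambda} m_k[g]}.
\end{equation*}
The product structure on the right encodes independence of the limiting components, and each factor is, on the half-line $\theta\geq 0$, the characteristic function of the claimed $(1+\beta)$-stable variable $\zeta_k$; the parameter $e^{\lambda}m_k[g]$ has non-positive real part because, in the definition of $Z_{g_k}$, the expression $(i\PG_s^{\lambda}g_k)^{1+\beta}$ contributes a factor $\cos(\pi(1+\beta)/2)<0$ for $\beta\in(0,1)$, which guarantees $|\exp(\theta^{1+\beta}e^{\lambda}m_k[g])|\leq 1$.

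To invoke L\'evy's theorem I need pointwise convergence of joint characteristic functions on the whole of $\R^{n+1}$. Convergence on $(-\infty,0]^{n+1}$ is free from what is already established by complex conjugation, since $\overline{\E e^{iS}}=\E e^{-iS}$ for any real $S$. For vectors with mixed signs the cleanest route is to revisit the proof of Lemma \ref{l:jointConvergence}, whose key ingredient, Lemma \ref{l:singlesubsystem}, relies only on sign-insensitive elementary inequalities of the form $|1-e^{iy}|\leq|y|$ and $|1-e^{iy}-iy|\leq 2|y|^2$, together with the analytic branch of $z^{1+\beta}$ on $\mathbb{C}\setminus\R_-$ introduced in the Remark following Fact \ref{f:fouriertransform}. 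The same proof therefore yields the identical estimate for arbitrary $(\theta_0,\ldots,\theta_n)\in[-\Theta,\Theta]^{n+1}$, with $\theta_k^{1+\beta}$ interpreted via that branch.

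The main obstacle is precisely this extension beyond the non-negative orthant; it is more a matter of careful bookkeeping than a genuine analytic difficulty. As a robust alternative one may also deduce joint tightness directly: marginal tightness of $M_k^t[g]/F_{t,k}$ follows from the $L^{1+\gamma}$-bound of Lemma \ref{l:singledelta} combined with the almost-sure convergence $|X_{t-k-1}|/e^{\lambda(t-k-1)}\to W_{\infty}$ provided by Fact \ref{f:martingaleconvergence} (on the survival event), so joint tightness is immediate, and the convergence of characteristic functions on the non-negative orthant, together with uniqueness of $(1+\beta)$-stable laws, pins down the weak subsequential limit. Either way, L\'evy's continuity theorem concludes the proof.
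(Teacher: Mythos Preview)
Your proposal is correct and matches the paper's intended route: the paper states the corollary as an immediate consequence of Lemma~\ref{l:jointConvergence} (``When it is fixed it implies'') and supplies no separate argument, so the content of your proof --- let $t\to\infty$ with $n$ fixed so that $Ce^{-\delta(t-n)}\to 0$, then invoke L\'evy's continuity theorem --- is exactly what the paper leaves to the reader.

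You are more careful than the paper on one point: Lemma~\ref{l:jointConvergence} is stated only for $\theta_k\in[0,\Theta]$, whereas the multivariate L\'evy theorem requires convergence on all of $\R^{n+1}$. Your primary fix, observing that the proofs of Lemma~\ref{l:singlesubsystem} and Lemma~\ref{l:jointConvergence} go through verbatim for $\theta_k$ of arbitrary sign (all estimates involve $|\theta|$, and $i\theta\,\PG_s^{\lambda}g_k$ remains purely imaginary and hence in the domain $\mathbb{C}\setminus\R_-$ of the chosen branch of $z^{1+\beta}$), is the clean and correct way to close this gap; the paper simply does not comment on it. Your tightness alternative is less tidy: marginal tightness does follow (since the one-dimensional characteristic function on $[0,\infty)$ determines the law by conjugation, giving marginal convergence), but the step ``uniqueness of $(1+\beta)$-stable laws pins down the weak subsequential limit'' would still need that the joint characteristic function on the positive orthant determines the joint law, which is not a standard fact. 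Stick with the first argument.
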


\begin{proof}
Let us denote $\gamma_{t,k} := M_k^t[g]/\ftk={\Delta_{t-k}[g_{k}]}/{\ftk}$ and a conditional characteristic function
\begin{equation}\label{e:varphitk}
  \varphi_{t,k}(\theta) := \E\kbr{\exp\rbr{i\theta\gamma_{t,k}}|\mathcal{F}_{t-k-1}},\quad 0\leq \theta \leq \Theta.
\end{equation}
Let $\varepsilon>0$ to be fixed later. Recall the event $\mathcal{E}_{t}(\varepsilon,M)$ defined in \eqref{e:et_event}, by Fact~\ref{f:maximumestimation} we take $M>0$ so that $\PR(\mathcal{E}_{t}(\varepsilon,M))\geq 1 - Ce^{-p_1 t}$ holds for some $C,p_1>0$ and further shortcut $\mathcal{E}_{t}=\mathcal{E}_{t}(\varepsilon,M)$. We define a modified conditional characteristic function
\begin{equation}\label{e:modofied}
\hat{\varphi}_{t,k}(\theta)
:=\E\kbr{\exp\rbr{i\theta\gamma_{t,k}}|\mathcal{F}_{t-k-1}} \indyk_{\mathcal{E}_{t-k-1}},\quad 0\leq \theta \leq \Theta.
\end{equation}
Using the inequality $|e^{ix}|\leq 1$ we estimate
\begin{equation}\label{e:epscondition}
\E\mbr{\varphi_{t,k}(\theta)-\hat{\varphi}_{t,k}(\theta)}\leq Ce^{-p_1 (t-k)}.
\end{equation}

Now we will study $\hat{\varphi}_{t,k}$. Recall that conditionally on $\mathcal{F}_{t-k-1}$, subsystems $X_{1}^{u,t-k-1}$
are independent, thus
$$\hat{\varphi}_{t,k}(\theta)=
\prd{u}{1}{|X_{t-k-1}|}\E\kbr{\exp\cbr{i\frac{\theta}{\ftk}
\rbr{\ddp{X_{1}^{u,t-k-1}}{g_{k}}-\PG_{1}^{\lambda}g_{k}(X_{t-k-1}(u))}}|\mathcal{F}_{t-k-1}} \indyk_{\mathcal{E}_{t-k-1}}.$$
We use Lemma~\ref{l:singlesubsystem} with $x = X_{t-k-1}(u)$ obtaining
\begin{equation}
  \label{e:product}
  \hat{\varphi}_{t,k}(\theta)=\prd{u}{1}{| X_{t-k-1}|}\kbr{1+Z_{g_{k}}\rbr{X_{t-k-1}(u);\frac{\theta}{\ftk}}
  +err\rbr{X_{t-k-1}(u);\frac{\theta}{\ftk}}}\indyk_{\mathcal{E}_{t-k-1}}\\
\end{equation}

Our aim now is showing that the total contribution of $err$ terms is small. By Lemma~\ref{l:singlesubsystem} on the event $\mathcal{E}_{t-k-1}$ for some $R\in \pspace$ we have
\begin{align*}
	err\rbr{X_{t-k-1}(u);\frac{\theta}{\ftk}} &\leq \rbr{\frac{e^{(\lambda-\kappa(g)\mu)k}}{\rbr{e^{\lambda k} |X_{t-k-1}|}^{\pb}}}^p 		R(X_{t-k-1}(u))\\
	&\leq \rbr{\frac{e^{(\lambda-\kappa(g)\mu)k}}{\rbr{e^{\lambda k}e^{(\lambda-\varepsilon)(t-k-1)}}^{\pb}}}^p Q_1(t-k-1),
\end{align*}
where $Q_1$ is some polynomial. Now using $\lambda\beta\leq \kappa(g)\mu(1+\beta)$ we estimate
\begin{align*}
  err\rbr{X_{t-k-1}(u);\frac{\theta}{\ftk}} &\leq \rbr{\frac{e^{(\lambda-\lambda \beta/(1+\beta))k}}{\rbr{e^{\lambda k}e^{(\lambda-\varepsilon)(t-k-1)}}^{\pb}}}^p Q_1(t-k-1) \\
  &= e^{\frac{p\varepsilon (t-k-1)}{1+\beta}}e^{-{\frac{p\lambda(t-k-1)}{1+\beta}}} Q_1(t-k-1).
\end{align*}
Since on $\mathcal{E}_{t-k-1}$ we have $|X_{t-k-1}|\leq e^{(\lambda+\varepsilon)(t-k-1)}$, we obtain
\begin{align*}
  \sum_{u=1}^{|X_{t-k-1}|}err\rbr{X_{t-k-1}(u);\frac{\theta}{\ftk}}\indyk_{\mathcal{E}_{t-k-1}}&\leq e^{(\lambda+\varepsilon)(t-k-1)} e^{\frac{p\varepsilon (t-k-1)}{1+\beta}}e^{-{\frac{p\lambda(t-k-1)}{1+\beta}}} Q_1(t-k-1).
\end{align*}
If $\varepsilon$ were $0$ the exponent in the expression above would be $\rbr{\frac{p}{1+\beta}-1}\lambda>0$ as by Lemma \ref{l:singlesubsystem} $p>1+\beta$. Thus we now fix $\varepsilon>0$ so that for some $C,\delta_1>0$
\begin{align}\label{e:error}
  \sum_{u=1}^{|X_{t-k-1}|}err\rbr{X_{t-k-1}(u);\frac{\theta}{\ftk}}\indyk_{\mathcal{E}_{t-k-1}}\leq C e^{-\delta_1(t-k)}.
\end{align}

Having dealt with the error terms, we analyse the contribution of $Z_{g_k}$ in \eqref{e:product}. We define
\begin{align*}
L_{t,k} := & \suma{u}{1}{|X_{t-k-1}|} Z_{g_{k}}\rbr{X_{t-k-1}(u);\frac{\theta}{\ftk}} =  \theta^{1+\beta}e^{-\lambda k}\frac{1}{|X_{t-k-1}|}\suma{u}{1}{|X_{t-k-1}|}Z_{g_{k}}\rbr{X_{t-k-1}(u)}\\
=&\theta^{1+\beta}e^{-\lambda k}\frac{\ddp{X_{t-k-1}}{Z_{g_k}}}{|X_{t-k-1}|},
\end{align*}
where we used $\ftk = \rbr{e^{\lambda k} |X_{t-k-1}|}^{\pb}$ and \eqref{e:zhtheta}. Recalling the definition of $m_k[g]$ in \eqref{eq:mkg} and using Lemma~\ref{l:lln} we obtain
\begin{equation}
\E\mbr{\rbr{L_{t,k}-\theta^{1+\beta}e^{\lambda}m_k[g] }\indyk_{\mathcal{E}_{t-k-1}}}\leq C_1e^{-\lambda k}e^{-p_1(t-k)}e^{(\lambda-\kappa(g)\mu)(1+\beta)k}\leq C_1e^{-p_1(t-k)}, \nonumber
\end{equation}
for some $C_1, p_1>0$, where in the last estimation we used $\lambda\beta\leq \kappa(g)\mu(1+\beta)$.
Hence using Fact~\ref{f:maximumestimation} we have also that
\begin{equation}\label{e:ltk}
\E\mbr{\rbr{L_{t,k}\indyk_{\mathcal{E}_{t-k-1}}-\theta^{1+\beta}e^{\lambda}m_k[g]}}\leq C_2 e^{-p_2(t-k)},
\end{equation}
for some $C_2,p_2>0$. We define
\begin{equation}\label{e:atk}
\mathcal{A}_{t,k} :=  \cbr{\mbr{L_{t,k}\indyk_{\mathcal{E}_{t-k-1}}-\theta^{1+\beta}e^{\lambda}m_k[g]}\leq C_2e^{-\frac{p_2}{2}(t-k)}}.
\end{equation}
Applying Markov's inequality with \eqref{e:ltk} we get that $\PR(\mathcal{A}_{t,k})\geq 1-e^{-\frac{p_2}{2}(t-k)}$.

Now we want to apply Fact~\ref{f:exp} to \eqref{e:product} with $n=|X_{t-k-1}|$,  $a_i^n = Z_{g_k}(\ldots) + err(\ldots)$ and $a=\theta^{1+\beta}e^{\lambda}m_k[g]$, which is by Lemma~\ref{l:zgmk} bounded by some constant $C_a$. Using \eqref{e:error} and \eqref{e:atk} we bound $b\leq  C_3 e^{-\delta_3 (t-k)}$ for some $C_3, \delta_3>0$. To check the assumption we use the fact that on $\mathcal{E}_{t-k-1}$ we have $\underset{{u\leq |X_{t-k-1}|}}{\max}|X_{t-k-1}(u)|\leq M (t-k-1)$ and thus using Lemma~\ref{l:zgmk} we get
$$Z_{g_{k}}\rbr{X_{t-k-1}(u);\frac{\theta}{\ftk}}\leq \frac{\theta^{1+\beta}}{|X_{t-k-1}|}R(X_{t-k-1}(u))\leq \frac{Q(t-k-1)}{|X_{t-k-1}|}$$ for some polynomial $Q$. Using the condition $|X_{t-k-1}|\geq e^{(\lambda-\varepsilon)(t-k-1)}$ from  $\mathcal{E}_{t-k-1}$ we get $|a^n_i|\leq n^{-3/4}$, when $t-k$ is sufficiently large. Finally,  by Fact~\ref{f:exp}
\begin{equation*}
\mbr{\rbr{\hat{\varphi}_{t,k}(\theta)-\exp\rbr{\theta^{1+\beta}e^{\lambda}m_k[g]}} \indyk_{\mathcal{A}_{t,k}}}\leq e^{2C_a} \rbr{\frac{C}{n^{1/2}}+C_3 e^{-\delta_3 (t-k)}} \leq C_4 e^{-\delta_4 (t-k)}
\end{equation*}
for some $C_4, \delta_4>0$. In the second step we used that on $\mathcal{E}_{t-k-1}$ we have $|X_{t-k-1}|\geq e^{(\lambda - \varepsilon)(t-k-1)}$. Recalling \eqref{e:epscondition} and dealing with $\mathcal{A}_{t,k}$ in the same way we obtain that for some $C_5, \delta_5>0$ there is
\begin{equation}\label{e:convergence}
\mbr{ {\varphi}_{t,k}(\theta)-\exp\rbr{\theta^{1+\beta}e^{\lambda}m_k[g]}}\leq C_5 e^{-\delta_5 (t-k)}.
\end{equation}

The above bound is strong enough to obtain the thesis of the lemma. Let us define for $\ell \in \cbr{-1, 0,\ldots,n}$
$$\varphi_{t}^\ell(\theta_{0},\ldots,\theta_{n}) :=
\prd{k}{0}{\ell}\exp\rbr{\theta_k^{1+\beta}e^{\lambda}m_k[g]} \E\exp\cbr{i\rbr{\sum_{k=\ell+1}^n \theta_{k}\gamma_{t,k}}}, \quad 0\leq \theta_i\leq \Theta,$$
where by convention the product is $1$ for $\ell=-1$. Conditioning with $\mathcal{F}_{t-\ell}$ we get
\begin{multline*}
  \varphi_{t}^\ell(\theta_{0},\ldots,\theta_{n}) - \varphi_{t}^{\ell+1}(\theta_{0},\ldots,\theta_{n}) = \prd{k}{0}{\ell}\exp\rbr{\theta_k^{1+\beta}e^{\lambda}m_k[g]}\\ \E\cbr{\rbr{\E[e^{i \theta_{\ell+1} \gamma_{t,\ell+1}}|\mathcal{F}_{t-\ell}] - e^{\theta_{\ell+1}^{1+\beta}e^{\lambda}m_{\ell+1}[g]}}\exp\cbr{i\rbr{\sum_{k=\ell+2}^n \theta_{k}\gamma_{t,k}}}}.
\end{multline*}
Using the elementary inequality $|e^{iz}|\leq 1$ and the fact that $\mbr{\exp\rbr{\theta_k^{1+\beta}e^{\lambda}m_k[g]}}\leq 1$ for any $k$ (note that it is a characteristic function) we obtain
\begin{equation*}
  |\varphi_{t}^\ell(\theta_{0},\ldots,\theta_{n}) - \varphi_{t}^{\ell+1}(\theta_{0},\ldots,\theta_{n})| \leq  \E{\mbr{\E[e^{i \theta_{\ell+1} \gamma_{t,\ell+1}}|\mathcal{F}_{t-\ell}] - e^{\theta_{\ell+1}^{1+\beta}e^{\lambda}m_{\ell+1}[g]}}}.
\end{equation*}
Recalling \eqref{e:varphitk} and applying~\eqref{e:convergence} we get
$$|\varphi_{t}^\ell(\theta_{0},\ldots,\theta_{n}) - \varphi_{t}^{\ell+1}(\theta_{0},\ldots,\theta_{n})|\leq C_5 e^{-\delta_5 (t-\ell-1)}.$$
To finish the proof we observe that the left hand side of thesis of the lemma is equal to $|\varphi_{t}^{(-1)}(\theta_{0},\ldots,\theta_{n}) - \varphi_{t}^{n}(\theta_{0},\ldots,\theta_{n})|$.
\end{proof}

\section{Large branching rate}\label{s:large}

Let us fix $g\in \pspace$ satisfying $\kappa(g)\geq 1$, in this section we assume that parameters of the system fulfill
\begin{equation*}
	\lambda>\kappa(g)\mu\rbr{1+\frac{1}{\beta}}.
\end{equation*}

For $p=(p_1,\ldots,p_d) \in\Z^d_+$ we recall the normalized Hermite polynomial $h_p$ given by \eqref{e:normalizedHermite} and define a~process $\{H_t^p\}_{t\geq 0}$ by
$$H_t^p := e^{-(\lambda-|p|\mu)t}\ddp{X_t}{h_p}=e^{-(\lambda-|p|\mu)t}\suma{u}{1}{|X_t|} h_p(X_t(i))$$
\begin{lemma} \label{l:martingale}
$H^p$ is a martingale with respect to $\mathcal{F}_t$. Moreover for any $0\leq \gamma< \beta$ we have $\underset{t\geq 0}{\sup} \nbr{H_t^p}_{L^{1+\gamma}}<\infty$ and therefore there exists
$$H_{\infty}^p := \gr{t}{\infty} H_t^p,$$
where the convergence is almost sure and in $L^{1+\gamma}$.
\end{lemma}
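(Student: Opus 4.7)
The plan is to verify the two assertions separately, both by short reductions to results already available in the paper. The martingale property will come from the branching property combined with the many-to-one formula, exploiting that $h_p$ is an eigenfunction of $L$, while the $L^{1+\gamma}$ bound will be a direct consequence of Lemma~\ref{l:crucial} applied with $h_p$ in place of $g$. Once both are in hand, Doob's $L^{1+\gamma}$ martingale convergence theorem closes the argument.

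For the martingale property I fix $s<t$ and use the branching decomposition
$\ddp{X_t}{h_p}=\suma{u}{1}{|X_s|}\ddp{X_{t-s}^{u,s}}{h_p}$,
where conditionally on $\mathcal{F}_s$ the subsystems $X_{t-s}^{u,s}$ are independent and each has the law of $X_{t-s}$ started from $X_s(u)$. The many-to-one identity \eqref{e:manytoone} gives $\E\kbr{\ddp{X_{t-s}^{u,s}}{h_p}\,|\,\mathcal{F}_s}=\PG_{t-s}^{\lambda}h_p(X_s(u))$. The key point is that $h_p$ is an eigenfunction of $L$ with eigenvalue $-|p|\mu$, so $\PG_\tau h_p=e^{-|p|\mu\tau}h_p$ for every $\tau\geq 0$ and hence $\PG_{t-s}^{\lambda}h_p=e^{(\lambda-|p|\mu)(t-s)}h_p$. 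Summing over $u$ yields
$\E\kbr{\ddp{X_t}{h_p}\,|\,\mathcal{F}_s}=e^{(\lambda-|p|\mu)(t-s)}\ddp{X_s}{h_p}$,
which on division by $e^{(\lambda-|p|\mu)t}$ becomes $\E\kbr{H_t^p\,|\,\mathcal{F}_s}=H_s^p$, as desired.

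For the norm bound I apply Lemma~\ref{l:crucial} to $h_p$, noting $\kappa(h_p)=|p|$. The standing assumption $\lambda>\kappa(g)\mu(1+1/\beta)$ combined with $|p|\leq \kappa(g)$ rearranges to $\lambda\beta>|p|(1+\beta)\mu$, so the condition $\gamma\lambda>|p|(1+\gamma)\mu$ is satisfied for every $\gamma$ in the non-empty interval $\left(\tfrac{|p|\mu}{\lambda-|p|\mu},\beta\right)$. For any such $\gamma$, Lemma~\ref{l:crucial} gives $\nbr{\ddp{X_t}{h_p}}_{L^{1+\gamma}}\leq Ce^{(\lambda-|p|\mu)t}$; dividing by $e^{(\lambda-|p|\mu)t}$ produces $\sup_{t\geq 0}\nbr{H_t^p}_{L^{1+\gamma}}\leq C$. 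For smaller $\gamma$ the same bound follows by Jensen's inequality. The conclusion that $H_t^p$ converges a.s. and in $L^{1+\gamma}$ to a limit $H_\infty^p$ is then the standard consequence of Doob's convergence theorem applied to a martingale bounded in $L^{1+\gamma}$ with $\gamma>0$, which is in particular uniformly integrable.

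There is no real obstacle here: the martingale property rests entirely on the eigenfunction identity $\PG_\tau h_p=e^{-|p|\mu\tau}h_p$ together with the many-to-one formula, while the uniform $L^{1+\gamma}$ control is a direct invocation of Lemma~\ref{l:crucial}. The only small point to check is that we are indeed in the supercritical branch of Lemma~\ref{l:crucial}, which is precisely what the standing assumption $\lambda>\kappa(g)\mu(1+1/\beta)$ guarantees for the relevant range of Hermite indices.
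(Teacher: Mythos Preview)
Your argument is correct and mirrors the paper's proof: the $L^{1+\gamma}$ bound is obtained exactly as in the paper via Lemma~\ref{l:crucial} (choosing $\gamma$ close to $\beta$ so that the first case applies), and your direct verification of the martingale property via the branching decomposition and the eigenfunction identity $\PG_\tau h_p=e^{-|p|\mu\tau}h_p$ is precisely the special case $h=h_p$, $a=|p|$ of Lemma~\ref{l:generalmartingale}, to which the paper defers. One small remark: your appeal to ``$|p|\leq\kappa(g)$'' is not an explicitly stated hypothesis---the lemma is really used only for $|p|=\kappa(g)$, and the paper's own proof makes the same tacit identification when it writes the condition as $\lambda>\kappa(g)(1+1/\gamma)\mu$ rather than $\lambda>|p|(1+1/\gamma)\mu$.
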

\begin{proof} We prove that $H_t^p$ is a martingale in Lemma \ref{l:generalmartingale} for a more general class of processes.

We assume that $\gamma<\beta$ is close enough to $\beta$ so that $\lambda>\kappa(g)\rbr{1+\frac{1}{\gamma}}\mu$ holds. Obviously, by the definition of $\kappa$ in \eqref{e:kappaf} we have $\kappa(h_p)=|p|$ and thus by Lemma~\ref{l:crucial} we obtain
$$\nbr{H_t^p}_{L^{1+\gamma}}\leq Ce^{-(\lambda-|p|\mu)t}e^{(\lambda-|p|\mu)t}=C,$$
for some $C>0$.  Hence the martingale is bounded in $L^{1+\gamma}$ and thus is convergent in $L^{1+\gamma}$ and almost surely.
\end{proof}

We are ready to formulate the main convergence theorem in the case of large branching rate.
\begin{thm}\label{t:supercritical} Let $g\in\mathcal{P}$ and assume that $\lambda>(1+1/\beta)\kappa(g)\mu$,  then
$$\frac{\ddp{X_t}{g}}{e^{(\lambda-\kappa(g)\mu) t}}\rightarrow \sum\limits_{|p|=\kappa(g)} \ddp{g}{h_p}_{\varphi} H_{\infty}^p,\quad \text{as }t\to +\infty.$$
The convergence holds in $L^{1+\gamma}$ for any $0\leq \gamma< \beta$. Moreover, if $g$ is twice differentiable and $|D^2 g| \in \pspace$ then the convergence is also almost sure.  
\end{thm}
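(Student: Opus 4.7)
The plan is to split $g$ into its leading Hermite modes plus a strictly higher-order remainder. Set $a_p = \ddp{g}{h_p}_\varphi$ and define the polynomial $g_\kappa := \sum_{|p|=\kappa(g)} a_p h_p \in \pspace$. The remainder $\tilde g := g - g_\kappa$ lies in $\pspace$ and, by the definition of $\kappa$ in~\eqref{e:kappaf}, satisfies $\kappa(\tilde g) \geq \kappa(g)+1$. By linearity and Lemma~\ref{l:martingale},
\[
\frac{\ddp{X_t}{g_\kappa}}{e^{(\lambda-\kappa(g)\mu)t}} \;=\; \sum_{|p|=\kappa(g)} a_p H_t^p \;\longrightarrow\; \sum_{|p|=\kappa(g)} a_p H_\infty^p
\]
both in $L^{1+\gamma}$ and almost surely. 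Hence the task reduces to showing that $\ddp{X_t}{\tilde g}/e^{(\lambda-\kappa(g)\mu)t}$ tends to zero.

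For the $L^{1+\gamma}$ statement I would invoke Lemma~\ref{l:crucial} on $\tilde g$. Since $\lambda > (1+1/\beta)\kappa(g)\mu$ holds strictly, one can pick $\gamma < \beta$ close enough to $\beta$ that also $\lambda > (1+1/\gamma)\kappa(g)\mu$. In the supercritical subcase of the lemma, the bound $e^{(\lambda-\kappa(\tilde g)\mu)t}$ yields decay via $\kappa(\tilde g) \geq \kappa(g)+1$; in the critical and subcritical subcases, the bound $t\,e^{\lambda t/(1+\gamma)}$ yields decay by the choice of $\gamma$, the surplus exponent being $\kappa(g)\mu - \gamma\lambda/(1+\gamma) < 0$. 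Convergence in $L^{1+\gamma'}$ for every smaller $\gamma'$ then follows from Jensen's inequality.

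For the almost-sure statement on the remainder, raising the above $L^{1+\gamma}$ bound to the $(1+\gamma)$-th power gives a summable bound along $t = n \in \mathbb{N}$, so Borel--Cantelli delivers $\ddp{X_n}{\tilde g}/e^{(\lambda-\kappa(g)\mu)n} \to 0$ a.s. To propagate this to all $t \geq 0$, for $t \in [n, n+1]$ I would decompose
\[
\ddp{X_t}{\tilde g} = \ddp{X_n}{\PG_{t-n}^\lambda \tilde g} + \sum_{u=1}^{|X_n|}\Bigl(\ddp{X_{t-n}^{u,n}}{\tilde g} - \PG_{t-n}^\lambda \tilde g(X_n(u))\Bigr).
\]
The first summand is $\mathcal{F}_n$-measurable; since the OU semigroup preserves the Hermite decomposition, $\kappa(\PG_s^\lambda \tilde g) = \kappa(\tilde g)$, and the same $L^{1+\gamma}$ bounds apply pointwise in $s \in [0,1]$. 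The second summand is a centered conditional fluctuation, bounded by a Lemma~\ref{l:singledelta_new}-type estimate applied to $\PG_{t-n}^\lambda \tilde g$.

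The main obstacle is converting these pointwise-in-$t$ bounds into bounds on $\sup_{t \in [n,n+1]}$ that remain summable over $n$ after dividing by $e^{(1+\gamma)(\lambda-\kappa(g)\mu)n}$. This is where the hypothesis $|D^2 g|\in \pspace$ enters: it makes $\PG_s^\lambda \tilde g$ into a $C^1$ family in $s$ with derivatives in $\pspace$, so one can chain a polynomially-fine discretization of $[n,n+1]$ with a linear interpolation of individual particle contributions via the Taylor expansion of $\tilde g$ along each OU trajectory. Pairing this with Fact~\ref{f:maximumestimation} to exclude atypical branching bursts should allow Borel--Cantelli to close the argument.
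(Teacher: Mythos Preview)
Your $L^{1+\gamma}$ argument is exactly the paper's: split off the $\kappa(g)$-level Hermite part, invoke Lemma~\ref{l:martingale} for it, and kill the remainder $\tilde g$ with Lemma~\ref{l:crucial} using $\kappa(\tilde g)\geq \kappa(g)+1$ and a $\gamma$ chosen close to $\beta$.

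For the almost-sure convergence of the remainder, the paper does \emph{not} go through Borel--Cantelli plus gap-filling. It builds a continuous-time martingale directly. For any $h\in\pspace$ with $Lh\in\pspace$ and any $a\in\R$, the process
\[
M_t^{h,a} := e^{-(\lambda-a\mu)t}\ddp{X_t}{h} - \int_0^t e^{-(\lambda-a\mu)w}\ddp{X_w}{Lh+a\mu h}\,dw
\]
is an $\mathcal F_t$-martingale (Lemma~\ref{l:generalmartingale}). Taking $h=\tilde g$ and $a=\kappa(g)+\tfrac12$ gives
\[
e^{-(\lambda-\kappa(g)\mu)t}\ddp{X_t}{\tilde g}=e^{-\mu t/2}\bigl(M_t^{\tilde g,a}+L_t^{\tilde g,a}\bigr),\qquad L_t^{\tilde g,a}=\int_0^t e^{-(\lambda-a\mu)w}\ddp{X_w}{L\tilde g+a\mu\tilde g}\,dw.
\]
Since $L$ preserves eigenspaces, $\kappa(L\tilde g+a\mu\tilde g)\geq\kappa(g)+1$; the very $L^{1+\gamma}$ estimates you already used show that $M^{\tilde g,a}$ is bounded in $L^{1+\gamma}$ (hence converges a.s.\ by the martingale convergence theorem) and that $|L_t^{\tilde g,a}|$ is dominated by an increasing process with finite $L^{1+\gamma}$ limit (hence also converges a.s.). Multiplying by $e^{-\mu t/2}$ kills both. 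This is where the hypothesis $|D^2g|\in\pspace$ is actually consumed: it guarantees $L\tilde g\in\pspace$, so the integrand is within the scope of Lemma~\ref{l:crucial}.

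Your Borel--Cantelli-along-integers step is fine, but the interpolation you sketch --- discretize $[n,n+1]$, Taylor-expand along each OU path, chain, exclude bursts via Fact~\ref{f:maximumestimation} --- is left heuristic precisely at the point you yourself flag as the main obstacle, and controlling $\sup_{t\in[n,n+1]}$ of a branching functional this way would be genuinely delicate. The paper's martingale packaging handles the continuous time parameter for free and needs from $g$ only that $Lg\in\pspace$.
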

The proof is presented in the two subsequent sections for the $L^{1+\gamma}$ and a.s. convergence respectively. Before the proof we present a corollary in the most common case of the second order analysis
\begin{cor}\label{c:supercritical_simple}Let $f\in\mathcal{P}$ and assume that $\lambda>(1+1/\beta)\mu$,  then
$$\frac{\ddp{X_t}{f} - |X_t|\ddp{\varphi}{f}}{e^{(\lambda-\mu) t}}\rightarrow \sum\limits_{|p|=1} \ddp{f}{h_p}_{\varphi} H_{\infty}^p,\quad \text{as }t\to +\infty.$$
The convergence holds in $L^{1+\gamma}$ for any $0\leq \gamma< \beta$. Moreover, if $f$ is twice differentiable and $|D^2 f| \in \pspace$ then the convergence is also almost sure.  
\end{cor}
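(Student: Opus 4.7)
The plan is to decompose $g$ into its projection onto the $\kappa(g)$-eigenspace of $L$ plus a residual of strictly higher Hermite order, treat the projection via the martingale Lemma~\ref{l:martingale}, and show the residual is negligible after normalization by $e^{(\lambda-\kappa(g)\mu)t}$. Set $g_H := \sum_{|p|=\kappa(g)}\ddp{g}{h_p}_\varphi\, h_p$ and $r := g - g_H$. Since each $h_p$ is a polynomial, $g_H\in\pspace$, hence $r\in\pspace$; by construction $\ddp{r}{h_p}_\varphi=0$ whenever $|p|\le\kappa(g)$, so $\kappa(r)\ge\kappa(g)+1$. Linearity yields
$$\frac{\ddp{X_t}{g}}{e^{(\lambda-\kappa(g)\mu)t}} = \sum_{|p|=\kappa(g)} \ddp{g}{h_p}_\varphi\, H_t^p \;+\; \frac{\ddp{X_t}{r}}{e^{(\lambda-\kappa(g)\mu)t}}.$$
Because $\lambda > (1+1/\beta)|p|\mu$ for every $|p|=\kappa(g)$, Lemma~\ref{l:martingale} gives $H_t^p \to H_\infty^p$ in $L^{1+\gamma}$ for every $\gamma<\beta$ and almost surely. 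The task reduces to showing that the residual tends to $0$ in the corresponding mode.

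For the $L^{1+\gamma}$ statement, strictness of $\lambda > (1+1/\beta)\kappa(g)\mu$ lets me pick, by continuity, some $\gamma_0\in(0,\beta)$ with $\lambda > (1+1/\gamma_0)\kappa(g)\mu$. Apply Lemma~\ref{l:crucial} to $r$ with this $\gamma_0$. In the supercritical sub-case for $r$ the resulting upper bound, divided by $e^{(\lambda-\kappa(g)\mu)t}$, decays like $e^{-(\kappa(r)-\kappa(g))\mu t}$ and vanishes since $\kappa(r) > \kappa(g)$. In the critical or subcritical sub-cases the same normalization yields at most $t\,e^{-t(\gamma_0 \lambda/(1+\gamma_0)-\kappa(g)\mu)}$, whose exponent is strictly negative precisely by the choice of $\gamma_0$. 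Monotonicity of $L^{1+\gamma}$-norms in $\gamma$ then delivers convergence for every $\gamma<\beta$.

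For the almost-sure statement under the smoothness hypothesis I first dispatch integer times: the $L^{1+\gamma_0}$ bound just obtained is summable along $n\in\mathbb{N}$, so Markov and Borel--Cantelli force $\ddp{X_n}{r}/e^{(\lambda-\kappa(g)\mu)n}\to 0$ almost surely. To pass to continuous time I control the oscillation on $[n,n+1]$ by splitting, for $s\in[0,1]$,
$$\ddp{X_{n+s}}{r} = \ddp{X_n}{\PG_s^{\lambda} r} + \bigl(\ddp{X_{n+s}}{r} - \ddp{X_n}{\PG_s^{\lambda} r}\bigr).$$
The first (predictable) summand admits a bound uniform in $s\in[0,1]$ via Lemma~\ref{l:crucial_new} applied to $\PG_s^\lambda r$ (Fact~\ref{f:semigroup} supplies a constant $C_h$ uniform in $s\in[0,1]$), which is again summable in $n$ after normalization. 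The second summand is a mean-zero martingale in $s$; applying It\^o's formula to each live particle's OU trajectory together with the compensated branching integral writes it as a Brownian stochastic integral with density proportional to $\nabla g$ plus a compensated jump sum involving $g$. The hypothesis $|D^2 g|\in\pspace$ guarantees $Lg\in\pspace$ and $|\nabla g|\in\pspace$, so Doob's $L^{1+\gamma_0}$ maximal inequality, Fact~\ref{f:moment}, the many-to-one formula~\eqref{e:manytoone} and Fact~\ref{f:oumoment} reduce the required estimate to $L^{1+\gamma_0}$-control of functionals $\ddp{X_u}{R}$ with $R\in\pspace$, which are dominated by exponentials $e^{\lambda u}$. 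A last Borel--Cantelli step concludes.

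The main obstacle is this final oscillation estimate: without a finite second moment one cannot invoke Doob's $L^2$ maximal inequality, so the branching martingale has to be pushed through the weaker $L^{1+\gamma}$ maximal inequality, and the $(1+\beta)$-stable-type jumps of the compensator must be controlled via Fact~\ref{f:moment}. The $C^2$ assumption on $g$ is exactly what makes both It\^o ingredients---the drift $Lg$ and the quadratic-variation density $|\nabla g|^2$---lie in $\pspace$, allowing the earlier $L^{1+\gamma}$ machinery to be reused intact.
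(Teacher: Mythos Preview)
Your $L^{1+\gamma}$ argument is essentially the paper's (Section~4.1). Note, though, that you have written a proof of Theorem~\ref{t:supercritical} rather than of the corollary: you never pass from $f$ to $g:=f-\ddp{\varphi}{f}$, record that $\ddp{X_t}{g}=\ddp{X_t}{f}-|X_t|\ddp{\varphi}{f}$ and $\ddp{g}{h_p}_\varphi=\ddp{f}{h_p}_\varphi$ for $|p|=1$, nor treat the degenerate case $\kappa(g)\geq 2$, in which the right-hand side vanishes but the theorem's hypothesis $\lambda>(1+1/\beta)\kappa(g)\mu$ may fail and a separate (easy) appeal to Lemma~\ref{l:crucial} is needed.

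For the almost-sure part you take a genuinely different route from the paper. The paper never discretizes to integer times; instead it introduces (Lemma~\ref{l:generalmartingale}) the compensated process
\[
M_t^{\tilde g,a}=e^{-(\lambda-a\mu)t}\ddp{X_t}{\tilde g}-\int_0^t e^{-(\lambda-a\mu)w}\ddp{X_w}{L\tilde g+a\mu\tilde g}\,dw,\qquad a=\kappa(g)+\tfrac12,
\]
checks that it is a martingale bounded in $L^{1+\gamma}$, and shows the integral term converges a.s.\ by monotone domination of its absolute integrand (Lemma~\ref{l:emtMt}). Martingale convergence then gives a.s.\ convergence directly in continuous time, and the only role of the $C^2$ hypothesis is to ensure $L\tilde g\in\pspace$. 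This bypasses oscillation estimates entirely.

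Your oscillation argument, by contrast, has a real gap at the ``predictable'' step. Lemma~\ref{l:crucial_new} applied to $\PG_s^\lambda r$ yields $\sup_{s\in[0,1]}\nbr{\ddp{X_n}{\PG_s^\lambda r}}_{L^{1+\gamma_0}}$ summable in $n$, but Borel--Cantelli requires summability of $\bigl\lVert\sup_{s\in[0,1]}|\ddp{X_n}{\PG_s^\lambda r}|\bigr\rVert_{L^{1+\gamma_0}}$, which is a strictly stronger statement; the map $s\mapsto\ddp{X_n}{\PG_s^\lambda r}$ is not a martingale, so no maximal inequality is available. (A rescue is possible via $\frac{d}{ds}\PG_s^\lambda r=\PG_s^\lambda(L+\lambda)r$ and the $C^2$ hypothesis, but you do not carry it out.) The martingale piece is also over-engineered: since $s\mapsto\ddp{X_{n+s}}{r}-\ddp{X_n}{\PG_s^\lambda r}$ is already a c\`adl\`ag martingale, Doob's $L^{1+\gamma_0}$ maximal inequality bounds its supremum by the endpoint norm, which Lemma~\ref{l:crucial} controls directly---no It\^o decomposition is needed, and your stated terminal estimate ``dominated by exponentials $e^{\lambda u}$'' would, after normalization by $e^{(\lambda-\kappa(g)\mu)n}$, blow up rather than vanish.
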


\subsection{Convergence in $L^{1+\gamma}$}
We recall \eqref{e:l2scalarproduct} and decompose
\begin{equation}\label{el:division}
e^{-(\lambda-\kappa(g)\mu)t}\ddp{X_t}{g}=e^{-(\lambda-\kappa(g)\mu)t}\displaystyle\sum\limits_{|p|=\kappa(g)}\ddp{g}{h_p}_{\varphi} \ddp{X_t}{h_p}+e^{-(\lambda-\kappa(g)\mu)t}\ddp{X_t}{\tilde{g}},
\end{equation}
where
\begin{equation}\label{el:tildeg}
\tilde{g} := g-\displaystyle\sum\limits_{|p|=\kappa(g)}\ddp{g}{h_p}_{\varphi} h_p.
\end{equation}
We assume that $\gamma$ is so close to $\beta$ that $\kappa(g)(1+\gamma)\mu-\lambda\gamma<0$ holds. We observe that $\kappa(\tilde g)\geq  \kappa(g)+1$ and applying Lemma~\ref{l:crucial} to $\tilde{g}$ we get for some $C>0$ 
$$||e^{-(\lambda-\kappa(g)\mu)t}\ddp{X_t}{\tilde{g}}||_{L^{1+\gamma}}\leq C
\begin{cases}
e^{-(\lambda-\kappa(g)\mu)t}e^{(\lambda-(\kappa(g)+1)\mu)t} & \hbox{  if  }\gamma\lambda> (\kappa(g)+1)(1+\gamma)\mu\\
te^{-(\lambda-\kappa(g)\mu)t}e^{\frac{\lambda}{1+\gamma}t} & \hbox{  if  }\gamma\lambda=(\kappa(g)+1)(1+\gamma)\mu\\
e^{-(\lambda-\kappa(g)\mu)t}e^{\frac{\lambda}{1+\gamma}t} & \hbox{  if  }\gamma\lambda<(\kappa(g)+1)(1+\gamma)\mu\\
\end{cases}$$
In the first case the exponent near $t$ is equal to
$\rbr{\kappa(g)\mu-\lambda+\lambda-(\kappa(g)+1)\mu}t=-\mu t.$
In the second and the third case the exponent is equal to\\
$\rbr{\kappa(g)\mu-\lambda+\frac{\lambda}{1+\gamma}}t
=\rbr{\frac{1}{1+\gamma}\rbr{\kappa(g)(1+\gamma)\mu-\lambda\gamma}}t.$
Hence in all cases we have
\begin{equation}\label{el:remainder}
\nbr{e^{-(\lambda-\kappa(g)\mu)t}\ddp{X_t}{\tilde{g}}}_{L^{1+\gamma}}\leq e^{-\delta t}
\end{equation}
for some $C, \delta>0$. The decomposition~\eqref{el:division} combined with~\eqref{el:remainder} and Lemma~\ref{l:martingale} shows that the convergence in $L^{1+\gamma}$ holds.

\subsection{Almost sure convergence}
Let $a \in \R$ and $h\in\mathcal{P}$ be such that $Lh \in \pspace$. We set $\bar{h}=Lh+a\mu h$, where $L$ is infinitesimal operator \eqref{eq:infinitesimal_operator}. We define a process $\cbr{M_t^{h,a}}_{t\geq 0}$ by
$$M^{h,a}_t := e^{-(\lambda-a\mu)t}\ddp{X_t}{h}-\int_{0}^t e^{-(\lambda-a\mu)w}\ddp{X_w}{\bar{h}}\text{d}w.$$
We notice that when $h=h_p$ and $a=|p|$ then $\bar{h}=0$ and $M^{h,a} = H^p$ defined in the previous section. Importantly, we have
\begin{lemma}\label{l:generalmartingale}
Let $a \in \R$ and $h\in\mathcal{P}$ be such that $Lh \in \pspace$. Then the process $M_t^{h,a}$ is a martingale.
\end{lemma}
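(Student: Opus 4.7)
The plan is to verify directly that $\E[M_t^{h,a}\mid \mathcal{F}_s]=M_s^{h,a}$ for $0\leq s\leq t$, using the branching Markov property and the many-to-one formula from Fact~\ref{f:fouriertransform} together with the semigroup identity $\frac{d}{dr}\mathcal{T}_r h=\mathcal{T}_r Lh$. First, I would justify integrability: since $h,Lh\in\pspace$, also $\bar h\in\pspace$, so by the many-to-one formula applied to $|h|,|\bar h|\in\pspace$ the random variables $\ddp{X_t}{h}$ and $\ddp{X_w}{\bar h}$ are integrable, with $\E_x|\ddp{X_w}{\bar h}|\leq e^{\lambda w}\mathcal{T}_w|\bar h|(x)$ depending polynomially on $x$ and locally boundedly on $w$; this legitimises the Fubini exchange in step below.

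By the branching Markov property each particle alive at time $s$ initiates an independent copy of the system, so the many-to-one formula yields
\begin{equation*}
\E[\ddp{X_t}{f}\mid \mathcal{F}_s]=\sum_{u=1}^{|X_s|}\E_{X_s(u)}\ddp{X_{t-s}}{f}=e^{\lambda(t-s)}\ddp{X_s}{\mathcal{T}_{t-s}f}
\end{equation*}
for any $f\in\pspace$. Applying this to $f=h$ in the first term of $M_t^{h,a}$ and to $f=\bar h$ inside the integral (after interchanging $\E[\cdot\mid\mathcal{F}_s]$ with $\int_s^t$ by Fubini), one obtains
\begin{equation*}
\E[M_t^{h,a}\mid \mathcal{F}_s]-M_s^{h,a}=e^{-(\lambda-a\mu)s}\Bigl(e^{a\mu(t-s)}\ddp{X_s}{\mathcal{T}_{t-s}h}-\ddp{X_s}{h}-\int_0^{t-s}e^{a\mu r}\ddp{X_s}{\mathcal{T}_r\bar h}\,dr\Bigr),
\end{equation*}
after substituting $r=w-s$ in the remaining integral.

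It then remains to check the deterministic identity
\begin{equation*}
e^{a\mu(t-s)}\mathcal{T}_{t-s}h(x)-h(x)=\int_0^{t-s}e^{a\mu r}\mathcal{T}_r\bar h(x)\,dr,\qquad x\in\R^d,
\end{equation*}
and then pair both sides against $X_s$ by linearity of $\ddp{X_s}{\cdot}$. This identity is an immediate consequence of $\frac{d}{dr}\mathcal{T}_r h=\mathcal{T}_r Lh$: differentiating $r\mapsto e^{a\mu r}\mathcal{T}_r h(x)$ gives $e^{a\mu r}(\mathcal{T}_r Lh+a\mu\mathcal{T}_r h)(x)=e^{a\mu r}\mathcal{T}_r\bar h(x)$, and integration from $0$ to $t-s$ produces the required equality. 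Combining the three displays shows $\E[M_t^{h,a}\mid \mathcal{F}_s]=M_s^{h,a}$, i.e., $M^{h,a}$ is a martingale.

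The only mildly delicate step is the integrability/Fubini bookkeeping for $\ddp{X_w}{\bar h}$; everything else is a direct manipulation of the many-to-one formula and the semigroup ODE. The hypothesis $Lh\in\pspace$ is used precisely to ensure $\bar h\in\pspace$ so that Fact~\ref{f:fouriertransform} is applicable.
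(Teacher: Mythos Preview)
Your proof is correct and follows essentially the same route as the paper's: both apply the many-to-one formula~\eqref{e:manytoone} to condition $\ddp{X_t}{h}$ and $\ddp{X_w}{\bar h}$ on $\mathcal{F}_s$, then reduce the martingale property to the semigroup identity $\frac{d}{dr}\bigl(e^{a\mu r}\mathcal{T}_r h\bigr)=e^{a\mu r}\mathcal{T}_r\bar h$. The only cosmetic difference is that the paper keeps the $e^{-(\lambda-a\mu)w}$ factor and differentiates in $w$ directly, whereas you substitute $r=w-s$ first; your version is also a bit more explicit about the Fubini/integrability justification.
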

\begin{proof}Using many-to-one formula \eqref{e:manytoone} we compute
\begin{equation}\label{el:MtFs}
\E\rbr{\ddp{X_t}{h}|\mathcal{F}_s}=\E\rbr{\suma{u}{1}{|X_s|} \ddp{X_s}{h}|\mathcal{F}_s}=
\suma{u}{1}{|X_s|} \PG_{t-s}^{\lambda}h(X_u)=\ddp{X_s}{\PG_{t-s}^{\lambda}h}.
\end{equation}
Further we get
\begin{align}\label{el:integralpart}
\E\kbr{\rbr{\calka{0}{t} e^{-(\lambda-a\mu)w}\ddp{X_t}{\bar{h}}\text{d}w}|\mathcal{F}_s}&=
\calka{0}{s} e^{-(\lambda-a\mu)w}\ddp{X_w}{\bar{h}} \text{d}w\nonumber\\
&+\E\kbr{\rbr{\calka{s}{t} e^{-(\lambda-a\mu)w}\ddp{X_w}{\bar{h}} \text{d}w}|\mathcal{F}_s}
\nonumber\\
&=\calka{0}{s} e^{-(\lambda-a\mu)w}\ddp{X_w}{\bar{h}} \text{d}w\nonumber\\
&+\calka{s}{t} e^{-(\lambda-a\mu)w}\ddp{X_s}{\PG_{w-s}^{\lambda}\bar{h}} \text{d}w .
\end{align}
Recall \eqref{e:pg}, to identify the second integral we compute the derivative
\begin{align*}
\frac{\text{d}}{\text{d}w}\rbr{e^{-(\lambda-a\mu)w}\ddp{X_s}{\PG_{w-s}^{\lambda}h}}&=
\frac{\text{d}}{\text{d}w}\rbr{e^{-\lambda s}e^{a\mu w}\ddp{X_s}{\PG_{w-s}h}}\nonumber\\
&=a\mu e^{-\lambda s}e^{a\mu w}\ddp{X_s}{\PG_{w-s}h}
+ e^{-\lambda s}e^{a\mu w}\ddp{X_s}{L\PG_{w-s}h}\nonumber\\
&=e^{-(\lambda-a\mu)w}\ddp{X_s}{\PG_{w-s}^{\lambda} (a\mu h+Lh)} = e^{-(\lambda-a\mu)w}\ddp{X_s}{\PG_{w-s}^{\lambda}\bar{h}}.
\end{align*}
Hence
\begin{equation}\label{el:integration}
\calka{s}{t} e^{-(\lambda-a\mu)w}\ddp{X_s}{\PG_{w-s}^{\lambda}\bar{h}} \text{d}w=
e^{-(\lambda-a\mu)t}\ddp{X_t}{\PG_{t-s}^{\lambda}h}
-e^{-(\lambda-a\mu)s}\ddp{X_s}{h}.
\end{equation}
In consequence combining~\eqref{el:MtFs}, \eqref{el:integralpart} and \eqref{el:integration} we obtain that $\E\kbr{M^{h,a}_t|\mathcal{F}_s}=M^{h,a}_s.$
\end{proof}
We denote
$$L^{h,a}_t := \calka{0}{t} e^{-(\lambda-a\mu)w}\ddp{X_w}{\bar{h}}\text{d}w.$$

In order to prove the almost sure convergence in Theorem~\ref{t:supercritical} we need to show\\
$e^{-(\lambda-\kappa(g)\mu)t}\ddp{X_t}{\tilde{g}}\to 0$ almost surely, where $\tilde{g}$ is given by~\eqref{el:tildeg} and satisfies $\kappa(\tilde{g})\geq  \kappa(g)+1$. We take $a=\kappa(g)+\frac{1}{2}$ and obtain
\begin{equation}\label{el:M+L}
e^{-(\lambda-\kappa(g)\mu)t}\ddp{X_t}{\tilde{g}}=e^{-\frac{1}{2}\mu t}(M^{\tilde{g},a}_t
+L^{\tilde{g},a}_t).
\end{equation}

We will show the almost sure convergence of both parts
\begin{lemma}\label{l:emtMt}
For $a=\kappa(g)+\frac{1}{2}$ we have
$$e^{-\frac{1}{2}\mu t}M_t^{\tilde{g},a}\rightarrow 0,\quad e^{-\frac{1}{2}\mu t}L_t^{\tilde{g},a}\rightarrow 0, \quad \text{as }t\rightarrow +\infty.$$
The convergences hold almost surely.
\end{lemma}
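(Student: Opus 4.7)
The plan is to establish $L^{1+\gamma}$ exponential decay for both $e^{-\frac{1}{2}\mu t}M_t^{\tilde{g},a}$ and $e^{-\frac{1}{2}\mu t}L_t^{\tilde{g},a}$, and then upgrade to almost sure convergence via Borel--Cantelli (combined with Doob's maximal inequality for the martingale part). Throughout, fix $\gamma\in(0,\beta)$ close enough to $\beta$ so that $\lambda > \kappa(g)(1+1/\gamma)\mu$, which is possible by the strict hypothesis $\lambda>(1+1/\beta)\kappa(g)\mu$.

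First I would handle the integral term $L_t^{\tilde{g},a}$. Observe that $\bar{\tilde g}=L\tilde g + a\mu\tilde g \in \pspace$ (since $|D^2 g|\in\pspace$ implies $Lg\in\pspace$), and because $\tilde g$'s Hermite expansion lives entirely in orders $\ge \kappa(g)+1$, one has $\kappa(\bar{\tilde g})\ge\kappa(g)+1>a$. Apply Lemma~\ref{l:crucial} to $\bar{\tilde g}$ in each of its three regimes and use Minkowski's integral inequality:
\begin{equation*}
\|L_t^{\tilde g,a}\|_{L^{1+\gamma}}\leq \int_0^t e^{-(\lambda-a\mu)w}\|\ddp{X_w}{\bar{\tilde g}}\|_{L^{1+\gamma}}\,dw.
\end{equation*}
Elementary algebra shows that in every regime the integrand is dominated by $C(1+w)e^{\alpha w}$ with $\alpha<\tfrac12\mu$, because $\alpha<\tfrac12\mu$ is equivalent (after reduction) to $\lambda>\kappa(g)(1+1/\gamma)\mu$. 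Hence $\|e^{-\frac{1}{2}\mu t}L_t^{\tilde g,a}\|_{L^{1+\gamma}}\leq Ce^{-\delta_1 t}$ for some $\delta_1>0$.

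For the martingale term I use the identity $M_t^{\tilde g,a}=e^{-(\lambda-a\mu)t}\ddp{X_t}{\tilde g}-L_t^{\tilde g,a}$. By the $L^{1+\gamma}$ argument of the previous subsection (estimate~\eqref{el:remainder}), $\|e^{-(\lambda-\kappa(g)\mu)t}\ddp{X_t}{\tilde g}\|_{L^{1+\gamma}}\leq Ce^{-\delta_2 t}$, and since $e^{-(\lambda-a\mu)t}=e^{\frac12\mu t}e^{-(\lambda-\kappa(g)\mu)t}$, the triangle inequality with the $L$-bound yields $\|e^{-\frac{1}{2}\mu t}M_t^{\tilde g,a}\|_{L^{1+\gamma}}\leq Ce^{-\delta t}$ for some $\delta>0$. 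To upgrade to almost sure convergence, for $L$ I apply Markov's inequality at integer times $n$ and Borel--Cantelli (using summability of $e^{-(1+\gamma)\delta_1 n}$), then control the oscillation $|L_t-L_n|$ for $t\in[n,n+1]$ by $\int_n^{n+1}e^{-(\lambda-a\mu)w}|\ddp{X_w}{\bar{\tilde g}}|\,dw$, whose $L^{1+\gamma}$ norm again decays exponentially after multiplication by $e^{-\frac{1}{2}\mu n}$. For $M$ I apply Doob's $L^{1+\gamma}$-maximal inequality on each interval $[n,n+1]$, giving $\|\sup_{s\in[n,n+1]}|M_s^{\tilde g,a}|\|_{L^{1+\gamma}}\leq C\|M_{n+1}^{\tilde g,a}\|_{L^{1+\gamma}}\leq Ce^{(\frac{1}{2}\mu-\delta)(n+1)}$, after which Markov + Borel--Cantelli forces $e^{-\frac{1}{2}\mu n}\sup_{s\in[n,n+1]}|M_s^{\tilde g,a}|\to 0$ a.s., hence $e^{-\frac{1}{2}\mu t}M_t^{\tilde g,a}\to 0$ a.s.

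The main technical obstacle is the careful three-regime case analysis of Lemma~\ref{l:crucial} applied to $\bar{\tilde g}$, especially the borderline case where a polynomial factor $w$ appears; this must be shown to be absorbed by the exponential gap. The algebraic crux in every case is verifying $\alpha<\tfrac{1}{2}\mu$, which reduces precisely to $\lambda>\kappa(g)(1+1/\gamma)\mu$ and so exploits the strict inequality in the standing assumption via continuity in $\gamma$.
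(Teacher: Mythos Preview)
Your proof is correct and takes a somewhat different route from the paper's. The paper argues more directly: it claims (via an analogue of \eqref{el:remainder}) that $e^{-(\lambda-a\mu)w}\|\ddp{X_w}{\bar{\tilde g}}\|_{L^{1+\gamma}}$ decays exponentially, so the increasing majorant $Y_t=\int_0^t e^{-(\lambda-a\mu)w}|\ddp{X_w}{\bar{\tilde g}}|\,dw$ is bounded in $L^{1+\gamma}$ and hence converges a.s.\ by monotonicity; likewise $M_t^{\tilde g,a}$ is then a martingale bounded in $L^{1+\gamma}$ and converges a.s.\ by the martingale convergence theorem, after which multiplication by $e^{-\mu t/2}$ gives zero. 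This avoids your discretisation, Doob's inequality, and Borel--Cantelli entirely. On the other hand, your argument is more robust in one respect: the paper needs $\alpha<0$ (in your notation) to get boundedness of $Y_t$, which in the third regime of Lemma~\ref{l:crucial} amounts to $\lambda>(\kappa(g)+\tfrac12)(1+1/\gamma)\mu$ and does not obviously follow from the standing hypothesis $\lambda>(1+1/\beta)\kappa(g)\mu$ for any $\gamma<\beta$; you only need $\alpha<\mu/2$, which as you correctly observe reduces precisely to $\lambda>\kappa(g)(1+1/\gamma)\mu$ and is always achievable by taking $\gamma$ close to $\beta$.
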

\begin{proof}
Since $L$ preserves its eigenspaces, we have $\kappa (L \tilde{g}+a\mu\tilde{g})=\kappa(\tilde{g})\geq  \kappa(g)+1$. The same way as in the proof of \eqref{el:remainder} we obtain that
\begin{equation}\label{e:est}
\nbr{e^{-\rbr{\lambda-a\mu}t}\ddp{X_t}{\tilde{g}}}_{L^{1+\gamma}}\leq C_1 e^{-\delta t},\quad
\nbr{e^{-\rbr{\lambda-a\mu}t}\ddp{X_t}{L \tilde{g}+a\tilde{g}}}_{L^{1+\gamma}}\leq C_1 e^{-\delta t},
\end{equation}
for some $C_1, \delta>0$. We denote $ Y_t=\calka{0}{t} e^{-(\lambda-a\mu)w}\mbr{\ddp{X_w}{L \tilde{g}+a\mu\tilde{g}}}dw$. Clearly $|L^{\tilde g, a}_t|\leq Y_t$. Using the previous estimation and the triangle inequality we obtain that
\begin{equation}\label{e:intest}
\nbr{L^{\tilde g, a}_t}_{L^{1+\gamma}}\leq\nbr{Y_t}_{L^{1+\gamma}}\leq C_1 \calka{0}{t} e^{-\delta w}dw\leq \frac{C_1}{\delta}.
\end{equation}
As $Y_t$ is increasing we conclude that it converges to some $Y_\infty$ in $L^{1+\gamma}$ and almost surely. This directly implies the second convergence. To get the first one we observe that by~\eqref{e:est} and~\eqref{e:intest} the martingale $M^{\tilde{g},a}_t$ is bounded in $L^{1+\gamma}$ and hence it converges almost surely.
\end{proof}

Using Lemma~\ref{l:emtMt} and~\eqref{el:M+L} we get that
$$e^{-(\lambda-\kappa(g)\mu)t}\ddp{X_t}{\tilde{g}}\rightarrow 0,$$
almost surely. Combining this result with Lemma~\ref{l:martingale} we obtain the almost sure convergence in Theorem~\ref{t:supercritical}.

\begin{rem}
This proof holds also for the case of $\beta=1$ i.e finite-variance case. It is even simpler, since instead of estimations for moments we have equalities and we do not need to take $\gamma<\beta$.
\end{rem}

\section{Critical branching rate}\label{s:critical}
Let us fix $g\in \pspace$ satisfying $\kappa(g)\geq 1$. In this section we assume that parameters of the system fulfill
\begin{equation}\label{ec:critical}
\lambda=\kappa(g)\mu\rbr{1+\frac{1}{\beta}} \iff \lambda=(\lambda-\kappa(g)\mu)(1+\beta)\iff \lambda\rbr{1-\frac{1}{1+\beta}}=\kappa(g)\mu.
\end{equation}
Recall definitions \eqref{e:zhtheta} and \eqref{eq:mkg}. First we will identify a parameter of the limiting distribution. Aiming at this goal we will show a more convenient way to write $m_k[g]$.
\begin{lemma}\label{l:mkalternative} We have
\begin{equation}\label{eq:mkalternative}
m_k[g]=\lambda \calka{\R^d}{} \rbr{\calka{k}{k+1} e^{-\lambda s}\kbr{\rbr{\PG^{\lambda}_{s} (ig)}^{1+\beta}}(x)  \text{d}s} \varphi(x) \text{d}x.
\end{equation}
\end{lemma}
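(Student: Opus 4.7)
The plan is direct computation, unwinding the definition of $Z_{g_k}$ and using two basic ingredients: the stationarity of $\varphi$ under the OU semigroup and the semigroup property $\PG_s^\lambda \PG_k^\lambda = \PG_{s+k}^\lambda$. The critical regime assumption \eqref{ec:critical} plays no role in this identity; it is purely an algebraic manipulation.

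First I would write out \eqref{eq:mkg} and substitute the definition of $Z_{g_k}$:
\begin{equation*}
  m_k[g] = e^{-\lambda(k+1)} \int_{\R^d} \int_0^1 \PG_{1-s}^{\lambda}\kbr{\lambda\rbr{\PG^{\lambda}_s(ig_k)}^{1+\beta}}(x)\, ds\, \varphi(x)\, dx.
\end{equation*}
Next I would apply Fubini and use that $\varphi$ is the invariant measure of $\PG_t$, so $\ddp{\PG_{1-s} f}{\varphi} = \ddp{f}{\varphi}$. Since $\PG_{1-s}^{\lambda} = e^{\lambda(1-s)} \PG_{1-s}$, this gives the factor $e^{\lambda(1-s)}$ pulled outside, yielding
\begin{equation*}
  m_k[g] = \lambda\, e^{-\lambda(k+1)} \int_0^1 e^{\lambda(1-s)} \int_{\R^d} \rbr{\PG_s^{\lambda}(ig_k)}^{1+\beta}(x)\,\varphi(x)\, dx\, ds.
\end{equation*}

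Then I would invoke linearity and the semigroup identity $\PG_s^\lambda(ig_k) = i\,\PG_s^\lambda \PG_k^\lambda g = i\,\PG_{s+k}^\lambda g = \PG_{s+k}^\lambda(ig)$, and change variables $s' = s+k$ with $ds' = ds$, so the inner integral runs over $s' \in [k,k+1]$. Collecting the exponential factors yields $e^{-\lambda(k+1)} \cdot e^{\lambda(1+k)} \cdot e^{-\lambda s'} = e^{-\lambda s'}$, so
\begin{equation*}
  m_k[g] = \lambda \int_{\R^d} \rbr{\calka{k}{k+1} e^{-\lambda s}\kbr{\rbr{\PG_s^\lambda(ig)}^{1+\beta}}(x)\, ds}\varphi(x)\, dx,
\end{equation*}
which is \eqref{eq:mkalternative}.

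I do not expect significant obstacles. The only point requiring care is the use of Fubini and the invariance identity $\ddp{\PG_{1-s} f}{\varphi} = \ddp{f}{\varphi}$ applied with $f = \lambda(\PG_s^\lambda(ig_k))^{1+\beta}$: one must check that the integrand is integrable against $\varphi$, but $g_k \in \pspace$ and Fact~\ref{f:semigroup} give polynomial bounds on $\PG_s^\lambda(ig_k)$, and $\varphi$ has Gaussian tails, so integrability is immediate.
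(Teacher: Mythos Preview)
Your proposal is correct and follows essentially the same route as the paper: unwind the definition, use Fubini together with the invariance $\ddp{\PG_{1-s} f}{\varphi}=\ddp{f}{\varphi}$ to strip off $\PG_{1-s}^\lambda$ (leaving the factor $e^{\lambda(1-s)}$), apply the semigroup identity $\PG_s^\lambda g_k=\PG_{s+k}^\lambda g$, and then substitute $s\mapsto s+k$. The only cosmetic difference is the order in which the paper applies the semigroup identity versus the invariance step; your bookkeeping of the exponential factors and your remark on integrability are both fine.
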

\begin{proof}
Using Fubini's theorem and the fact that $\varphi$ is an invariant measure of $\PG$ we compute
\begin{align*}
m_k[g]&=\lambda e^{-\lambda (k+1)}\calka{0}{1}\calka{\R^d}{} \PG_{1-s}^{\lambda}\kbr{\rbr{\PG^{\lambda}_{s+k} (ig)}^{1+\beta}}(x)  \varphi(x) \text{d}x \text{d}s \nonumber\\
&=\lambda  \calka{\R^d}{} \rbr{\calka{0}{1} e^{-\lambda (k+s)}\kbr{\rbr{\PG^{\lambda}_{s+k} (ig)}^{1+\beta}}(x)  \text{d}s} \varphi(x) \text{d}x\nonumber.
\end{align*}
Now the proof follows using substitution $s=s+k$.
\end{proof}
Now we describe the limit in the critical case. Let $h$ be the projection of $g$ onto the $\kappa(g)$-eigenspace i.e.
\begin{equation}\label{e:h(g)}
h(x):=\displaystyle\sum\limits_{|p|=\kappa(g)}\ddp{g}{h_p}_{\varphi} h_p(x).
\end{equation}
\begin{lemma}\label{l:meanvalue}
Let $g\in\mathcal{P}$ and $\lambda\beta=\kappa(g)\mu(1+\beta)$. Then the following limit exists
\begin{equation}\label{ec:limit_mkg}
  \bar{m}[g] := \gr{t}{\infty} \frac{1}{t}\rbr{\suma{k}{0}{t} m_k[g]} = \lambda \ddp{(ih)^{1+\beta}}{\varphi}.
\end{equation}
\end{lemma}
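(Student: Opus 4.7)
My plan is to use Lemma~\ref{l:mkalternative} to turn the sum $\sum_{k=0}^t m_k[g]$ into a single integral over $[0,t+1]$ and then identify the limit of the integrand as $s\to\infty$, finally concluding by a Ces\`aro-type averaging argument.

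First, by Fubini and Lemma~\ref{l:mkalternative},
\begin{equation*}
  \suma{k}{0}{t} m_k[g] = \lambda \calka{0}{t+1} e^{-\lambda s}\ddp{\rbr{\PG_s^{\lambda}(ig)}^{1+\beta}}{\varphi} \,ds.
\end{equation*}
Using $\PG_s^\lambda g = e^{\lambda s}\PG_s g$, the integrand simplifies to
\begin{equation*}
  G(s) := e^{-\lambda s}\ddp{\rbr{i\PG_s^{\lambda}g}^{1+\beta}}{\varphi} = e^{\lambda\beta s}\ddp{\rbr{i\PG_s g}^{1+\beta}}{\varphi}.
\end{equation*}
So it suffices to show $G(s) \to \ddp{(ih)^{1+\beta}}{\varphi}$ as $s\to\infty$ and then apply the fact that $\frac{1}{t}\int_0^{t+1} G(s) ds$ has the same limit.

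Second, I would identify the pointwise limit. Expanding $g = \sum_p a_p h_p$ in the Hermite basis gives
\begin{equation*}
  \PG_s g(x) = \suma{p}{}{}a_p e^{-|p|\mu s} h_p(x) = e^{-\kappa(g)\mu s}\rbr{h(x) + r_s(x)},
\end{equation*}
where $r_s(x) \to 0$ for every fixed $x$ (the remaining terms have $|p|>\kappa(g)$ and decay strictly faster). Using the critical relation $\lambda\beta = \kappa(g)\mu(1+\beta)$ we get
\begin{equation*}
  e^{\lambda\beta s}\rbr{i\PG_s g(x)}^{1+\beta} = \rbr{ie^{\kappa(g)\mu s}\PG_s g(x)}^{1+\beta} \longrightarrow (ih(x))^{1+\beta}, \quad s\to\infty,
\end{equation*}
pointwise in $x$.

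Third, for the main (and only mildly nontrivial) step, I would upgrade the pointwise convergence to convergence of the $\varphi$-integral by dominated convergence. By Fact~\ref{f:semigroup} there is $R\in\pspace$ such that $|\PG_s g(x)|\leq e^{-\kappa(g)\mu s}R(x)$, and therefore
\begin{equation*}
  \mbr{e^{\lambda\beta s}(i\PG_s g(x))^{1+\beta}} \leq e^{\lambda\beta s - \kappa(g)\mu(1+\beta)s}R(x)^{1+\beta} = R(x)^{1+\beta},
\end{equation*}
where the exponential factor is exactly $1$ in the critical regime. Since $R^{1+\beta}\in\pspace \subseteq L^1(\varphi)$, dominated convergence yields $G(s)\to\ddp{(ih)^{1+\beta}}{\varphi}$. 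This is the step where the critical equality is essential; outside of the critical regime the prefactor would blow up or vanish, and the proper normalization of $m_k[g]$ would be different.

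Finally, because $G(s)$ tends to a finite limit $L := \ddp{(ih)^{1+\beta}}{\varphi}$ as $s\to\infty$, a standard argument (fix $\eps>0$, split the interval of integration at a point $s_0$ beyond which $|G(s)-L|<\eps$, and observe that the contribution from $[0,s_0]$ is $O(1/t)$) gives
\begin{equation*}
  \frac{1}{t} \calka{0}{t+1} G(s)\,ds \longrightarrow L, \quad t\to\infty.
\end{equation*}
Multiplying by $\lambda$ establishes $\bar{m}[g] = \lambda \ddp{(ih)^{1+\beta}}{\varphi}$, as claimed.
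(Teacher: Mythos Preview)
Your proof is correct and takes a route that differs in organization from the paper's. The paper works term-by-term: it shows $|m_k[g]-m_k[h]|\leq Ce^{-\mu k}$ by applying the inequality \eqref{e:1+beta} to $(i\PG_s^\lambda g)^{1+\beta}-(i\PG_s^\lambda h)^{1+\beta}$ inside the integral defining $m_k$, and then computes $m_k[h]=\lambda\ddp{(ih)^{1+\beta}}{\varphi}$ explicitly (constant in $k$) since $h$ is an eigenfunction; the Ces\`aro limit is then immediate with a quantitative rate. You instead collapse the sum into $\lambda\int_0^{t+1}G(s)\,ds$, prove $G(s)\to\ddp{(ih)^{1+\beta}}{\varphi}$ by pointwise convergence plus the uniform-in-$s$ bound $R^{1+\beta}\in L^1(\varphi)$ from Fact~\ref{f:semigroup}, and then Ces\`aro-average. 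This avoids \eqref{e:1+beta} entirely and is arguably more elementary, while the paper's version yields an explicit exponential rate. One small point to tighten: the pointwise claim $r_s(x)\to 0$ should not rest on the $L^2(\varphi)$ Hermite expansion but on the decomposition $g=h+(g-h)$ with $\kappa(g-h)\geq\kappa(g)+1$ and the pointwise estimate of Fact~\ref{f:semigroup}, which gives $|e^{\kappa(g)\mu s}\PG_s(g-h)(x)|\leq e^{-\mu s}R(x)$; with that adjustment the argument is complete.
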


\begin{proof}

By definition of $\kappa$ given in \eqref{e:kappaf} we have $h \not \equiv 0$ and, moreover, $h$ is an eigenfunction of $\PG_t$ with eigenvalue $e^{-\kappa(g)\mu t}$.  We observe further that $\kappa(h)=\kappa(g)$ and $\kappa(g-h)\geq  \kappa(g)+1$. Fact~\ref{f:semigroup} yields that for some $R_1\in\pspace$ we have
$$|\PG_s^{\lambda}g|\leq e^{(\lambda-\kappa(g)\mu)s}R_1(x), \quad |\PG_s^{\lambda}h|\leq e^{(\lambda-\kappa(g)\mu)s}R_1(x),\quad |\PG_s^{\lambda}(g-h)|\leq e^{(\lambda-(\kappa(g)+1)\mu)s}R_1(x).$$
We aim to show that $m_k[g] - m_k[h] = e^{-\lambda k}\ddp{Z_{g_k} - Z_{h_k}}{\varphi}$ are small. Using \eqref{e:1+beta} we obtain for some $R\in\pspace$
\begin{align*}
\mbr{\rbr{\PG^{\lambda}_{s} (ig_k)}^{1+\beta}(x)  -\rbr{\PG^{\lambda}_{s} (ih_k)}^{1+\beta}(x)  } &\leq \mbr{\rbr{i\PG^{\lambda}_{s+k} g}^{1+\beta}(x)  -\rbr{i\PG^{\lambda}_{s+k} h}^{1+\beta}(x)  } \\
& \leq e^{(\lambda-\kappa(g)\mu)\beta (s+k)}e^{(\lambda-(\kappa(g)+1)\mu)(s+k)}R(x)\\
&=e^{(\lambda-\mu) (s+k)}R(x),
\end{align*}
for some $R \in \pspace$. In the last equality we used~\eqref{ec:critical}. In consequence $\mbr{Z_{g_k}(x)-Z_{h_k} (x) }\leq e^{(\lambda-\mu)k}R_1(x)$ and further $|m_k[g] - m_k[h]|\leq Ce^{-\mu k}$ for some $C>0, R_1\in\pspace$. Hence, the limits of $(1/t)\sum_{k=0}^{t}m_k[g]$ and $(1/t)\sum_{k=0}^{t}m_k[h]$ coincide.
Using $\PG^{\lambda}_{s} h = e^{(\lambda - \kappa(g)\mu)s} h$, Lemma~\ref{l:mkalternative} and~\eqref{ec:critical} we obtain
\begin{equation*}
  m_k[h] = \lambda \calka{\R^d}{} \rbr{\calka{k}{k+1} e^{-\lambda s}\kbr{e^{(\lambda-\kappa(g)\mu)s}ih(x)}^{1+\beta} \text{d}s} \varphi(x) \text{d}x= \lambda \calka{\R^d}{}  (ih(x))^{1+\beta} \varphi(x) \text{d}x.
\end{equation*}
Hence
$\gr{t}{\infty} \frac{1}{t}\rbr{\suma{k}{0}{t} m_k[g]}= \lambda \ddp{(ih(x))^{1+\beta}}{\varphi(x)}.$
\end{proof}
We are ready to present the main result of this section.

\begin{thm}\label{t:critical}
Let $g\in\mathcal{P}$  and assume that $\lambda=(1+1/\beta)\kappa(g)\mu$, then
\begin{equation}\label{ec:thm}
\frac{\ddp{X_t}{g}}{(t|X_t|)^{\pb }}\rightarrow^d \eta,\quad t\to +\infty,
\end{equation}
where $\eta$ is a $(1+\beta)$-stable random variable with the characteristic function given  by
\begin{equation}\label{e:characteristic_critical}
  \E \exp\rbr{i\theta\eta}=\exp\rbr{\theta^{1+\beta}\bar{m}[g]}, \quad \theta\geq 0.
\end{equation}
\end{thm}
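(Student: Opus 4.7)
The plan is to show $\chi_t(\theta):=\E\exp\rbr{i\theta\,\ddp{X_t}{g}/(t|X_t|)^{\pb}}\to \exp(\theta^{1+\beta}\bar m[g])$ for every $\theta\geq 0$, using the decomposition $\ddp{X_t}{g}=\suma{k}{0}{t-1}M^t_k[g]$ from~\eqref{e:decomposition} together with the joint near-independence in Lemma~\ref{l:jointConvergence}. Remark~\ref{r:intuitive} explains why the critical regime is harder than the supercritical one: every $M^t_k[g]$ contributes at the same order $e^{\lambda t/(1+\beta)}$, so no truncation to finitely many $k$'s will suffice, and essentially all $t$ scales must contribute simultaneously, which is what produces the extra factor $t^{\pb}$ in the normalisation.

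First I would fix $\gamma<\beta$ sufficiently close to $\beta$ and pick a cutoff $L=L(t)\sim c\log t$ with a constant $c$ chosen so that both the tail bound below and the martingale fluctuation bound work; then split $\ddp{X_t}{g}=S_1+S_2$ with $S_1:=\suma{k}{0}{t-L-1}M^t_k[g]$ and $S_2:=\sum_{k=t-L}^{t-1}M^t_k[g]$. For the tail, the critical identity $\kappa(g)\mu(1+\beta)=\lambda\beta$ turns Lemma~\ref{l:singledelta} into $\nbr{M^t_k[g]}_{L^{1+\gamma}}\leq Ce^{\lambda t/(1+\gamma)-\delta_1 k}$ with $\delta_1:=\lambda(\beta-\gamma)/((1+\gamma)(1+\beta))>0$; summing over $k\in\{t-L,\dots,t-1\}$ and using the lower bound $(t|X_t|)^{\pb}\gtrsim t^{\pb}e^{\lambda t/(1+\beta)}$ valid on a high-probability survival event (Fact~\ref{f:martingaleconvergence}) yields $\nbr{S_2/(t|X_t|)^{\pb}}_{L^{1+\gamma}}=O(Le^{\delta_1 L}/t^{\pb})\to 0$ provided $c$ is small enough.

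For the main part I would write
\[
\frac{S_1}{(t|X_t|)^{\pb}}=\suma{k}{0}{t-L-1}\frac{M^t_k[g]}{F_{t,k}}\cdot T_{t,k},\qquad T_{t,k}:=\frac{F_{t,k}}{(t|X_t|)^{\pb}}=t^{-\pb}e^{-\lambda/(1+\beta)}\rbr{\frac{W_{t-k-1}}{W_t}}^{\pb},
\]
with $F_{t,k}=(e^{\lambda k}|X_{t-k-1}|)^{\pb}$ as in Lemma~\ref{l:jointConvergence}. By Fact~\ref{f:martingaleconvergence} and the choice $L\gg(2/\lambda)\log t$, on a high-probability event the random coefficient $T_{t,k}$ is uniformly close, over $0\leq k\leq t-L-1$, to the deterministic quantity $\tilde\tau_t:=t^{-\pb}e^{-\lambda/(1+\beta)}$. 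Using the Lipschitz bound $|e^{ix}-e^{iy}|\leq |x-y|$ together with Lemma~\ref{l:crucial} (to control $\E|M^t_k[g]|/F_{t,k}$ on the good event), I would replace each $T_{t,k}$ by $\tilde\tau_t$ inside the exponential, with a vanishing additive error in the characteristic function.

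Finally I would apply Lemma~\ref{l:jointConvergence} with $n=t-L-1$ and all $\theta_k=\theta\tilde\tau_t$ (uniformly bounded since $\tilde\tau_t\to 0$); the approximation error is $O(e^{-\delta L})\to 0$, and the resulting product equals $\exp\rbr{(\theta\tilde\tau_t)^{1+\beta}e^{\lambda}\suma{k}{0}{t-L-1}m_k[g]}=\exp\rbr{(\theta^{1+\beta}/t)\suma{k}{0}{t-L-1}m_k[g]}$, which by Lemma~\ref{l:meanvalue}, together with the bound $|m_k[g]|\leq C$ in the critical regime from Lemma~\ref{l:zgmk} (so the missing $L$ tail terms contribute $O(L/t)\to 0$), converges to $\exp(\theta^{1+\beta}\bar m[g])$. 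The main obstacle is the replacement step for the random $T_{t,k}$: one must simultaneously take $L$ large enough to make the martingale fluctuations of $W_{t-k-1}/W_t$ uniformly small over all $k$, and yet small enough to keep $Le^{\delta_1 L}=o(t^{\pb})$ for the tail estimate; this balance can be achieved precisely because $\gamma$ can be chosen close enough to $\beta$ so that $\delta_1$ is arbitrarily small compared with $\lambda$.
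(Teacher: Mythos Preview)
Your proposal is correct and follows essentially the same route as the paper's proof: split the decomposition \eqref{e:decomposition} at a cutoff $t-L$ with $L\asymp\log t$, show the tail $S_2$ is negligible via Lemma~\ref{l:singledelta} together with a lower bound on $|X_t|$, replace the random normalisation $(t|X_t|)^{\pb}$ by the per-term normalisations $F_{t,k}$ using the martingale convergence of $W_t$ (the paper does this term-by-term via the events $B_{t,k}$ and the estimate \eqref{ec:indyk}), and then invoke Lemma~\ref{l:jointConvergence} with $\theta_k=\theta(te^{\lambda})^{-\pb}$ and Lemma~\ref{l:meanvalue}. The paper simply fixes $L=\ln t$, whereas you keep $L=c\log t$ flexible and correctly explain why the two constraints on $c$ are compatible once $\gamma$ is taken close to $\beta$; two minor slips are that the lower bound on $|X_t|$ comes from Fact~\ref{f:magi} rather than Fact~\ref{f:martingaleconvergence}, and the per-$k$ moment input is Lemma~\ref{l:singledelta} rather than Lemma~\ref{l:crucial}.
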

Before the proof we present a corollary in the most common case of the second order analysis
\begin{cor}\label{c:critical_simple} Let $f\in\mathcal{P}$ and assume that $\lambda=(1+1/\beta)\mu$,  then
  $$\frac{\ddp{X_t}{f} - \ddp{\varphi}{f}}{(t|X_t|)^{\pb }}\rightarrow^d \eta,\quad t\to +\infty.$$
  Set $g = f - \ddp{f}{\varphi}$. When $\kappa(g) = 1$ then $\eta$ has characteristic function \eqref{e:characteristic_critical}, otherwise $\eta = 0$.
\end{cor}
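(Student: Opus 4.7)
The plan is to apply Theorem~\ref{t:critical} to the centered function $g := f-\ddp{f}{\varphi}$. The centering ensures $\ddp{g}{1}_\varphi=0$, so $\kappa(g)\geq 1$, and $\ddp{X_t}{g}=\ddp{X_t}{f}-|X_t|\ddp{f}{\varphi}$ is precisely the centered numerator appearing in the corollary. Under the hypothesis $\lambda=(1+1/\beta)\mu$, the critical condition $\lambda=(1+1/\beta)\kappa(g)\mu$ of Theorem~\ref{t:critical} holds if and only if $\kappa(g)=1$, which splits the argument into two cases.

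If $\kappa(g)=1$, Theorem~\ref{t:critical} applies directly to $g$, yielding $\ddp{X_t}{g}/(t|X_t|)^{1/(1+\beta)}\rightarrow^d\eta$ with $\eta$ of characteristic function \eqref{e:characteristic_critical}. If $\kappa(g)\geq 2$, the computation $\lambda\beta=\mu(1+\beta)<\kappa(g)\mu(1+\beta)$ places $g$ in the subcritical regime described in Remark~\ref{r:intuitive}. In this regime Theorem~\ref{t:subcritical} (from Section~\ref{s:small}) shows that $\ddp{X_t}{g}/|X_t|^{1/(1+\beta)}$ converges in distribution to a (possibly degenerate) $(1+\beta)$-stable law, and hence this sequence is tight; multiplying by the deterministic factor $t^{-1/(1+\beta)}\to 0$ then delivers convergence in distribution to $0$, confirming $\eta=0$.

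The main obstacle is entirely in the proof of Theorem~\ref{t:critical} itself, of which the non-degenerate case of this corollary is an immediate consequence. The only additional observations needed here are that centering preserves the Hermite-expansion structure (so $\kappa(g)\geq 1$), and that the critical normalization $(t|X_t|)^{1/(1+\beta)}$ exceeds the subcritical one $|X_t|^{1/(1+\beta)}$ by a factor $t^{1/(1+\beta)}$, which automatically forces the limit to be degenerate whenever $g$ falls into the subcritical regime.
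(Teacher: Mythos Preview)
Your argument is correct and is precisely the intended derivation: the paper gives no separate proof of this corollary, stating it immediately before the proof of Theorem~\ref{t:critical} and leaving it to the reader to apply Theorem~\ref{t:critical} when $\kappa(g)=1$ and Theorem~\ref{t:subcritical} (with the extra factor $t^{-1/(1+\beta)}$ forcing degeneracy) when $\kappa(g)\geq 2$. One small remark: the numerator in the corollary as printed is $\ddp{X_t}{f}-\ddp{\varphi}{f}$, not $\ddp{X_t}{f}-|X_t|\ddp{\varphi}{f}$; you tacitly (and correctly) read this as a typo, consistent with the centering used in Corollary~\ref{c:supercritical_simple} and in the Introduction.
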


\begin{proof} Recalling decomposition~\eqref{e:decomposition} we write
\begin{equation}\label{ec:cut}
(t|X_t|)^{-\pb } {\ddp{X_t}{g}}{}=(t|X_t|)^{-\pb }\suma{k}{0}{\lfloor t-\ln t\rfloor } M_k^t[g]+(t|X_t|)^{-\pb }\suma{k}{\lceil t-\ln t\rceil}{t} M_k^t[g] =: I_t + J_t.
\end{equation}
We will show that $I_t \to^d \eta$ and $J_t \to^d 0$, where $\eta$ is as prescribed in the theorem.\\
\textbf{Convergence of $I_t \to^d \eta$}. We define
\begin{equation}
  \tilde{I}_t := \suma{k}{0}{\lfloor t-\ln t\rfloor }\frac{M_k^t[g]}{\rbr{t{e^{\lambda (k+1)} |X_{t-k-1}|}}^{1+\beta}}.\nonumber
\end{equation}
We apply Lemma~\ref{l:jointConvergence} with $n=\lfloor t-\ln t \rfloor$ and $\theta_k=\frac{\theta}{(te^{\lambda})^{\pb}}$  obtaining
$$\mbr{\E e^{i\theta \tilde{I}_t}-\exp\rbr{\theta^{1+\beta} \frac{1}{t}\rbr{\suma{k}{0}{\lfloor t-\ln t\rfloor} m_k[g]}}}\leq e^{-p\ln t}=t^{-p}$$
for some $p>0$. Now the convergence $\tilde{I}_t \rightarrow^d \eta$ follows by Lemma \ref{l:meanvalue}.

Using elementary inequality $\mbr{\prod z_i-\prod z_i'}\leq \sum |z_i-z_i'|$ valid for $z_i, z_i'\in\mathbb{C}, |z_i|, |z_i'|\leq 1$ we obtain that
\begin{equation}\label{ec:decomp}
  |\E e^{i\theta \tilde{I}_t} - \E e^{i\theta I_t}| \leq \suma{k}{0}{\lfloor t-\ln t\rfloor } \E|Y_{t,k}|,
\end{equation}
where
\begin{equation}\label{ec:singlesummand}
Y_{t,k} := \exp\rbr{i\theta \rbr{\frac{M_k^t[g]}{\rbr{te^{\lambda (k+1)} |X_{t-k-1}|}^{\pb}}}}-\exp\rbr{i\theta\rbr{\frac{M_k^t[g]}{\rbr{t|X_t|}^{\pb}}}}.
\end{equation}
We will show that $\E |Y_{t,k}|$ is small. Recall the martingale $W_t = e^{-\lambda t}|X_t|$ and denote the event
\begin{equation}
  B_{t,k} := \cbr{|W_t-W_{t-k-1}|\leq e^{-\frac{\lambda}{4} (t-k-1)}, W_{t-k-1}\geq  e^{-\frac{\lambda}{32}(t-k-1)}}.\nonumber
\end{equation}
Using $|Y_{t,k}|\leq 2$ and a union bound we get
\begin{align}\label{ec:completion}
\E |Y_{t,k}|\indyk_{B_{t,k}^c} &\leq 2 \PR\rbr{|W_t-W_{t-k-1}|> e^{-\frac{\lambda}{4}(t-k-1)}} + 2 \PR\rbr{W_{t-k-1}< e^{-\frac{\lambda}{32}(t-k-1)}} \nonumber\\
&\leq Ce^{-p(t-k-1)}.
\end{align}
for some $C,p>0$. To estimate the second term we use Fact \ref{f:maximumestimation}. For the first term we observe that by Fact~\ref{f:martingaleconvergence} we have $\E |W_t-W_{t-k-1}|\leq Ce^{-\frac{\lambda}{2} (t-k-1)}$ and then use use Markov's inequality.

By the elementary inequality $|e^{iz_1 }-e^{iz_2}|\leq |z_1 - z_2|$ valid for $z_1, z_2 \in \R$ and straightforward calculations we get
\begin{align}\label{ec:indyk}
\E |Y_{t,k}|\indyk_{B_{t,k}}&\leq \theta \E\mbr{M_k^t[g]\rbr{\frac{1}{\rbr{te^{\lambda (k+1)} |X_{t-k-1}|}^{\pb}}-\frac{1}{\rbr{t|X_t|}^{\pb}}}}\indyk_{B_{t,k}}\\
&=\E|M_k^t[g]|\frac{1}{t^{\pb}}\mbr{\frac{|X_t|^{\pb}-\rbr{e^{\lambda (k+1)} |X_{t-k-1}|}^{\pb}}{|X_t|^{\pb}\rbr{e^{\lambda (k+1)} |X_{t-k-1}|}^{\pb}}}\indyk_{B_{t,k}}\nonumber\\
&=\theta \frac{1}{t^{\pb}}e^{-\frac{\lambda}{1+\beta} t} \E|M_k^t[g]|\frac{\mbr{W_t^{\pb}-W_{t-k-1}^{\pb}}}{W_t^{\pb}W_{t-k-1}^{\pb}}
\indyk_{B_{t,k}}.\nonumber
\end{align}
Using the fact that $\{|W_t|\geq  e^{-\frac{\lambda}{16}(t-k-1)}\}\subset B_{t,k}$ and the elementary inequality
$|x^{1/(1+\beta)}-y^{1/(1+\beta)}|\leq C\max\rbr{|x|^{-\frac{\beta}{1+\beta}},|y|^{-\frac{\beta}{1+\beta}}}|x-y|$ valid for $x,y\in \R_{+}$, on the event $B_{t,k}$ we have
$$W_{t-k-1}^{\pb}\geq  e^{-\frac{\lambda}{32(1+\beta)}(t-k-1)}\geq  e^{-\frac{\lambda}{32}(t-k-1)}, \quad
W_t^{\pb}\geq   e^{-\frac{\lambda}{16(1+\beta)}(t-k-1)} \geq  e^{-\frac{\lambda}{16}(t-k-1)},$$
\begin{align*}
 \mbr{W_t^{\pb}-W_{t-k-1}^{\pb}} &\leq C\max\rbr{W_t^{-\frac{\beta}{1+\beta}},W_{t-k-1}^{-\frac{\beta}{1+\beta}}}|W_t-W_{t-k-1}|\\
 &\leq C\max \rbr{e^{\frac{\lambda}{32}(t-k-1)},e^{\frac{\lambda}{16}(t-k-1)}}e^{-\frac{\lambda}{4}(t-k-1)}=Ce^{-\frac{3}{16}\lambda (t-k-1)}.
\end{align*}
Putting these estimates into~\eqref{ec:indyk} and using Lemma~\ref{l:singledelta} we obtain that for every $\gamma<\beta$
\begin{equation}\label{ec:milestone}
  \E |Y_{t,k}|\indyk_{B_{t,k}}\leq Ce^{-\frac{\lambda}{1+\beta} t}\E|M_k^t[g]| e^{-\frac{3}{32}\lambda (t-k-1)}
  \leq C({\gamma})e^{-\frac{\lambda}{1+\beta} t} e^{\frac{\lambda}{1+\gamma}t} e^{\frac{\gamma\lambda-\kappa(g)(1+\gamma)\mu}{1+\gamma}k}e^{-\frac{\lambda}{32}(t-k-1)}.
\end{equation}
Simple calculations using~\eqref{ec:critical} lead to $\frac{\lambda}{1+\gamma}-\frac{\lambda}{1+\beta}=-\frac{\gamma\lambda-\kappa(g)(1+\gamma)\mu}{1+\gamma}$. Thus
\begin{equation}
  \E |Y_{t,k}|\indyk_{B_{t,k}} \leq  C({\gamma}) e^{\rbr{\frac{\lambda}{1+\gamma}-\frac{\lambda}{1+\beta} }(t-k)} e^{-\frac{\lambda}{32}(t-k-1)}.\nonumber
\end{equation}
Now we take $\gamma$ so close to $\beta$ that $\frac{\lambda}{1+\gamma}-\frac{\lambda}{1+\beta}=\frac{\lambda}{64}$. In the end we obtain
\begin{equation*}
\E |Y_{t,k}|\indyk_{B_{t,k}}\leq Ce^{-\frac{\lambda}{64}(t-k-1)},
\end{equation*}
for some $C>0$. Now using~\eqref{ec:decomp} and \eqref{ec:completion} we get that
\begin{equation}\label{ec:milestone2}
 |\E e^{i\theta \tilde{I}_t} - \E e^{i\theta I_t}| \leq Ce^{-q \ln t}=Ct^{-q},
\end{equation}
for some $C,q>0$. Hence, recalling $\tilde{I}_t\rightarrow^d \eta$, we also have $I_t \to^d \eta$.
\\
\textbf{Convergence of $J_t \to^d 0$}. By Lemma~\ref{l:singledelta} we know that for any $\gamma<\beta$ we have
$$\nbr{M_k^t[g]}_{L^{1}}\leq \nbr{M_k^t[g]}_{L^{1+\gamma}}\leq C(\gamma)
e^{\frac{\lambda}{1+\gamma}t} e^{\frac{\gamma\lambda-\kappa(g)(1+\gamma)\mu}{1+\gamma}k}= C(\gamma)e^{\frac{\lambda}{1+\gamma}(t-k)}e^{\frac{\lambda}{1+\beta}k},$$
for some $C(\gamma)>0$.
In the last step we used~\eqref{ec:critical}. By the triangle inequality we obtain
$$\nbr{\suma{k}{\lceil t-\ln t\rceil}{t} M_k^t[g] }_{L^1}\leq C_\gamma e^{\frac{\lambda}{1+\gamma}\ln t}e^{\frac{\lambda}{1+\beta}(t-\ln t)},$$
for some $C_\gamma >0$.
We denote the event $A_t:=\cbr{|X_t|\geq  t^{-1/2}e^{\lambda t}}$. By Fact~\ref{f:magi} we know that $1-\PR(A_t)\leq t^{-p}$ for some $p>0$. Hence using elementary inequality $|e^{iz}-1|\leq |z|$ valid for $z \in \R$ we get
\begin{align*}
\mbr{1-\E\kbr{\exp\rbr{i\theta J_t}}}&\leq
\E\rbr{\mbr{1-\kbr{\exp\rbr{i\theta J_t}}}\indyk_{A_t}}+\E\rbr{\mbr{1-\kbr{\exp\rbr{i\theta J_t}}}\indyk_{A_t^c}}\\
&\leq \E|J_t|\indyk_{A_t}+2\PR(A_t^c) \leq C_\gamma e^{\frac{\lambda}{1+\gamma}\ln t}e^{\frac{\lambda}{1+\beta}(t-\ln t)}\frac{1}{(t^{1/2}e^{\lambda t})^{\pb}}+2t^{-p}\\
&\leq C_\gamma{
e^{\rbr{\frac{\lambda}{1+\gamma}-\frac{\lambda}{1+\beta}}\ln t}t^{-\frac{1}{2(1+\beta)}}+2t^{-p}} = C_\gamma{
t^{\rbr{\frac{\lambda}{1+\gamma}-\frac{\lambda}{1+\beta}}-\frac{1}{2(1+\beta)}}+2t^{-p}}.
\end{align*}
We take $\gamma$ so close to $\beta$ that the first exponent is negative, thus $\mbr{1-\E\kbr{\exp\rbr{i\theta J_t}}}\rightarrow  0$. Consequently,  $J_t \to^d 0$, which combined with~\eqref{ec:cut} and $I_t \to^d \eta$ yields thesis of the theorem.
\end{proof}
%SSSSSSSSSSSSSSSSSSSSSSSSSSSSSSSSSSSSSSSSSSSSSSSSSSSSSSSSSSSSSSSSSSSS

\section{Small branching rate}\label{s:small}
Let us fix $g\in \pspace$ satisfying $\kappa(g)\geq 1$. In this section we assume that parameters of the system fulfill
\begin{equation}\label{es:subcritical}
\lambda<\rbr{1+\frac{1}{\beta}}\kappa(g)\mu \iff \lambda\beta<\kappa(g)\mu(1+\beta).
\end{equation}
Recall definitions \eqref{e:zhtheta} and \eqref{eq:mkg}. First we will identify a parameter of the limiting distribution.
\begin{lemma}\label{l:series}
The series $\suma{k}{0}{\infty} m_k[g]$ is absolutely convergent and
$$m[g]:=\suma{k}{0}{\infty} m_k[g]=\lambda e^{\lambda}\calka{\R^d}{} \kbr{\calka{0}{\infty} e^{-\lambda s}\kbr{\rbr{\PG^{\lambda}_{s} (ig)}^{1+\beta}}(x)  \text{d}s} \varphi(x) \text{d}x.$$
\end{lemma}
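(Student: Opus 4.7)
Absolute convergence is immediate from Lemma~\ref{l:zgmk}: it gives $|m_k[g]| \leq C e^{(\lambda\beta - \kappa(g)\mu(1+\beta))k}$, and under the small branching assumption~\eqref{es:subcritical} this exponent is strictly negative, so $\sum_{k\geq 0} m_k[g]$ converges by comparison with a geometric series.

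For the closed-form identity the strategy is to start from the integral representation supplied by Lemma~\ref{l:mkalternative}, which expresses $m_k[g]$ as an integral of the integrand $\lambda e^{-\lambda s}\bigl[(\PG_s^\lambda(ig))^{1+\beta}\bigr](x)\,\varphi(x)$ over $[k,k+1]\times \R^d$, and then to sum over $k\geq 0$. Because the intervals $\{[k,k+1]\}_{k\geq 0}$ tile $[0,\infty)$, interchanging the summation with the $(s,x)$-integration merges them into a single integral over $[0,\infty)\times \R^d$, yielding the displayed formula.

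To license the interchange I would appeal to Fubini/Tonelli applied to the absolute value of the integrand. The needed integrability follows from raising the decay estimate $|\PG_s^\lambda g(x)|\leq e^{(\lambda-\kappa(g)\mu)s}R(x)$ of Fact~\ref{f:semigroup} to the $(1+\beta)$-th power and weighting by $e^{-\lambda s}\varphi(x)$: the resulting $s$-exponent is $(1+\beta)(\lambda-\kappa(g)\mu)-\lambda=\lambda\beta-\kappa(g)\mu(1+\beta)$, which is negative precisely by~\eqref{es:subcritical}, while $R\varphi\in L^1(\R^d)$ provides the spatial integrability. I do not foresee any serious obstacle: the argument is essentially the same exponential-decay bookkeeping that already underlies Lemma~\ref{l:zgmk}, only applied integrand-wise to justify the exchange.
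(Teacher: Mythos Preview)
Your proposal is correct and is essentially the paper's own argument: the paper invokes Lemma~\ref{l:zgmk} together with~\eqref{es:subcritical} for absolute convergence of the series, then uses Fact~\ref{f:semigroup} to bound $|e^{-\lambda s}(\PG_s^\lambda g)^{1+\beta}|$ by $e^{(\lambda\beta-\kappa(g)\mu(1+\beta))s}R(x)$, and finally appeals to Lemma~\ref{l:mkalternative} to pass from the sum of integrals over $[k,k+1]$ to a single integral over $[0,\infty)$. Your explicit invocation of Tonelli to justify the interchange is exactly the step the paper leaves implicit.
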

\begin{proof}
By Lemma~\ref{l:zgmk} we have $|m_k[g]|\leq Ce^{(\lambda\beta-\kappa(g)\mu(1+\beta))k}$ for some $C>0$. Moreover, Fact~\ref{f:semigroup} yields $|e^{-\lambda s}\rbr{\PG^{\lambda}_{s} g}^{1+\beta}|\leq e^{(\lambda\beta-\kappa(g)\mu(1+\beta))s}R(x)$ for some $R\in\mathcal{P}$. By~\eqref{es:subcritical}, the exponents are negative hence both the series and integral are absolutely convergent. Now using~\eqref{eq:mkalternative} we get the equality from the thesis.
\end{proof}
We are ready to present the main result of this section.
\begin{thm}\label{t:subcritical}
Let $g\in\mathcal{P}$ and assume that $\lambda<(1+1/\beta)\kappa(g)\mu$. Then
\begin{equation}\label{es:thm}
\frac{\ddp{X_t}{g}}{|X_t|^{\pb }}\rightarrow^d \zeta,\quad t\to +\infty,
\end{equation}
where $\zeta$ is a $(1+\beta)$-stable random variable with the characteristic function given by
\begin{equation}\label{e:characteristic_subcritical}
  \E \exp\rbr{i\theta\zeta}=\exp\rbr{\theta^{1+\beta}m[g]}, \quad \theta\geq 0.
\end{equation}
\end{thm}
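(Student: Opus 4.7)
The strategy parallels that of Theorem~\ref{t:critical}, with a crucial simplification: under~\eqref{es:subcritical} the quantities $m_k[g]$ decay geometrically (Lemma~\ref{l:zgmk}), so $\sum_{k=0}^{\infty} m_k[g] = m[g]$ converges absolutely by Lemma~\ref{l:series}. The plan is to fix a slowly growing cut-off, say $n = \lfloor \ln t \rfloor$, and decompose via~\eqref{e:decomposition}
\begin{equation*}
  \frac{\ddp{X_t}{g}}{|X_t|^{1/(1+\beta)}} = I_t + J_t, \quad I_t := \suma{k}{0}{n} \frac{M_k^t[g]}{|X_t|^{1/(1+\beta)}}, \quad J_t := \suma{k}{n+1}{t} \frac{M_k^t[g]}{|X_t|^{1/(1+\beta)}}.
\end{equation*}
In the subcritical regime the dominant fluctuations come from times close to $t$ (small $k$), so $I_t$ should carry the stable limit while $J_t$ vanishes.

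For $I_t \rightarrow^d \zeta$, introduce the auxiliary variable
\begin{equation*}
  \hat{I}_t := \suma{k}{0}{n} \frac{M_k^t[g]}{\rbr{e^{\lambda(k+1)}|X_{t-k-1}|}^{1/(1+\beta)}} = e^{-\lambda/(1+\beta)} \suma{k}{0}{n} \frac{M_k^t[g]}{F_{t,k}}.
\end{equation*}
Applying Lemma~\ref{l:jointConvergence} with $\theta_k \equiv \theta e^{-\lambda/(1+\beta)}$ gives an error of order $Ce^{-\delta(t-n)} \to 0$; since $\theta_k^{1+\beta} e^{\lambda} = \theta^{1+\beta}$, the limit product equals $\exp\rbr{\theta^{1+\beta}\sum_{k=0}^{n} m_k[g]}$, which converges to $\exp\rbr{\theta^{1+\beta} m[g]}$ by Lemma~\ref{l:series}. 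Hence $\hat{I}_t \rightarrow^d \zeta$. Writing $|X_t| = e^{\lambda t} W_t$ and $e^{\lambda(k+1)} |X_{t-k-1}| = e^{\lambda t} W_{t-k-1}$, the passage from $\hat{I}_t$ to $I_t$ is the normalization-swap used in the proof of Theorem~\ref{t:critical}: on the good event $B_{t,k} = \cbr{|W_t - W_{t-k-1}| \leq e^{-\lambda(t-k-1)/4},\ W_{t-k-1} \geq e^{-\lambda(t-k-1)/32}}$, whose complement has exponentially small probability by Fact~\ref{f:martingaleconvergence} and Fact~\ref{f:magi}, the two denominators agree up to a factor $1 + O\rbr{e^{-c(t-k)}}$, and the $L^{1+\gamma}$ bound from Lemma~\ref{l:singledelta} shows that $I_t - \hat{I}_t \to 0$ in probability.

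For $J_t$, Lemma~\ref{l:singledelta} yields, for any $\gamma < \beta$,
\begin{equation*}
  \nbr{M_k^t[g]}_{L^{1+\gamma}} \leq C e^{\lambda t/(1+\gamma)} e^{(\gamma\lambda - \kappa(g)(1+\gamma)\mu)k/(1+\gamma)}.
\end{equation*}
Since~\eqref{es:subcritical} is an open condition, one may pick $\gamma$ close to $\beta$ so that the exponent in $k$ remains strictly negative; summing the geometric tail from $k = n+1$ onward produces a bound of order $e^{\lambda t/(1+\beta)} e^{-\delta' n}$ for some $\delta' > 0$. Combined with the high-probability lower bound $|X_t| \geq t^{-1/2} e^{\lambda t}$ from Fact~\ref{f:magi} and Markov's inequality, this yields $J_t \to 0$ in probability. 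Putting together $I_t - \hat{I}_t \to 0$, $\hat{I}_t \rightarrow^d \zeta$ and $J_t \to 0$ in probability proves~\eqref{es:thm}. The one delicate point is the normalization swap, but it is a verbatim repetition of the critical-case argument and works cleanly here because $t - n \to \infty$.
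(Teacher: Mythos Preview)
Your overall architecture matches the paper's, but the cut-off $n=\lfloor\ln t\rfloor$ is too small and the $J_t$ step fails. The claim ``summing the geometric tail from $k=n+1$ onward produces a bound of order $e^{\lambda t/(1+\beta)}e^{-\delta'n}$'' is incorrect: Lemma~\ref{l:singledelta} only gives
\[
\sum_{k>n}\nbr{M_k^t[g]}_{L^{1+\gamma}}\leq C(\gamma)\,e^{\frac{\lambda}{1+\gamma}t}\,e^{-a(\gamma)(n+1)},
\]
and the prefactor is $e^{\lambda t/(1+\gamma)}$, not $e^{\lambda t/(1+\beta)}$. Since you must take $\gamma<\beta$ (the $(1+\beta)$-moment does not exist), the ratio $e^{\lambda t/(1+\gamma)}/e^{\lambda t/(1+\beta)}=e^{c(\gamma)t}$ grows exponentially for every admissible $\gamma$. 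With $n=\lfloor\ln t\rfloor$ the tail factor $e^{-a(\gamma)n}=t^{-a(\gamma)}$ is only polynomial and cannot kill $e^{c(\gamma)t}$, so $J_t$ does not go to zero. Letting $\gamma\uparrow\beta$ does not help because $C(\gamma)$ is not controlled in that limit and one cannot pass to $\gamma=\beta$.

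The paper avoids this by cutting at $n=\lfloor t-\ln t\rfloor$: then $J_t$ has only about $\ln t$ terms with $k\approx t$, and for such $k$ one can absorb the mismatch $e^{\lambda t/(1+\gamma)}$ versus $e^{\lambda t/(1+\beta)}$ into the (still negative) exponent in $k$, exactly as in the critical case. On the $I_t$ side everything you wrote remains valid with this larger cut-off: Lemma~\ref{l:jointConvergence} yields an error $Ce^{-\delta(t-n)}=Ct^{-\delta}$, the partial sum $\sum_{k\le n}m_k[g]$ still converges to $m[g]$, and the normalization swap goes through because $t-k\ge\ln t\to\infty$ for all $k\le n$. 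So the fix is a one-line change of cut-off; the rest of your sketch is fine.
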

Before the proof we present a corollary in the most common case of the second order analysis
\begin{cor}\label{c:subcritical_simple} Let $f\in\mathcal{P}$ and assume that $\lambda<(1+1/\beta)\mu$,  then
  $$\frac{\ddp{X_t}{f} - \ddp{\varphi}{f}}{|X_t|^{\pb }}\rightarrow^d \zeta,\quad t\to +\infty.$$
  Set $g = f - \ddp{f}{\varphi}$. When $\kappa(g) = 1$ then $\zeta$ has characteristic function \eqref{e:characteristic_subcritical}, otherwise $\zeta = 0$.
\end{cor}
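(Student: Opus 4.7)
The proof mirrors the strategy of Theorem~\ref{t:critical}, simplified by the absolute convergence of $\sum_k m_k[g]$ established in Lemma~\ref{l:series}, which replaces the Ces\`aro averaging of the critical case. Using decomposition~\eqref{e:decomposition} I split
\begin{equation*}
  \frac{\ddp{X_t}{g}}{|X_t|^{\pb}}=\underbrace{\sum_{k=0}^{\lfloor t-\ln t\rfloor}\frac{M_k^t[g]}{|X_t|^{\pb}}}_{I_t}+\underbrace{\sum_{k=\lceil t-\ln t\rceil}^{t}\frac{M_k^t[g]}{|X_t|^{\pb}}}_{J_t},
\end{equation*}
and aim to show $I_t\rightarrow^d\zeta$ and $J_t\rightarrow^P 0$.

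For $I_t$, introduce the auxiliary quantity $\tilde I_t:=\sum_{k=0}^{\lfloor t-\ln t\rfloor}M_k^t[g]/\rbr{e^{\lambda(k+1)}|X_{t-k-1}|}^{\pb}$, which carries the random normalization compatible with Lemma~\ref{l:jointConvergence}. Applying that lemma with $n=\lfloor t-\ln t\rfloor$ and $\theta_k=\theta/e^{\lambda/(1+\beta)}$ (so that $\theta_k^{1+\beta}e^{\lambda}=\theta^{1+\beta}$) gives
\begin{equation*}
  \mbr{\E e^{i\theta\tilde I_t}-\exp\rbr{\theta^{1+\beta}\sum_{k=0}^{\lfloor t-\ln t\rfloor}m_k[g]}}\leq Ct^{-\delta},
\end{equation*}
and Lemma~\ref{l:series} identifies the limit of the exponent as $\theta^{1+\beta}m[g]$. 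The passage from $\tilde I_t$ to $I_t$ amounts to replacing $W_{t-k-1}$ by $W_t$ in the normalization, since $e^{\lambda(k+1)}|X_{t-k-1}|/|X_t|=W_{t-k-1}/W_t$; I follow~\eqref{ec:decomp}--\eqref{ec:milestone2}, using the same good event $B_{t,k}=\cbr{|W_t-W_{t-k-1}|\leq e^{-\lambda(t-k-1)/4},\,W_{t-k-1}\geq e^{-\lambda(t-k-1)/32}}$, Fact~\ref{f:martingaleconvergence} to control its complement, and the $L^{1+\gamma}$ bound on $M_k^t[g]$ from Lemma~\ref{l:singledelta} to handle the resulting telescoping sum. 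This yields $|\E e^{i\theta I_t}-\E e^{i\theta\tilde I_t}|\to 0$, hence $I_t\rightarrow^d\zeta$.

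For the tail $J_t$, the strict subcritical inequality $\lambda\beta<\kappa(g)(1+\beta)\mu$ lets one choose $\gamma<\beta$ such that $\delta_0:=(\gamma\lambda-\kappa(g)(1+\gamma)\mu)/(1+\gamma)<0$. Lemma~\ref{l:singledelta} and the triangle inequality then give, after summing the geometric series,
\begin{equation*}
\nbr{\sum_{k=\lceil t-\ln t\rceil}^{t}M_k^t[g]}_{L^1}\leq Ct^{|\delta_0|}e^{(\lambda/(1+\gamma)+\delta_0)t}=Ct^{|\delta_0|}e^{(\lambda-\kappa(g)\mu)t},
\end{equation*}
the last identity being the algebraic simplification $\lambda/(1+\gamma)+\delta_0=\lambda-\kappa(g)\mu$. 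Restricting to the high-probability event $A_t:=\cbr{|X_t|\geq t^{-1/2}e^{\lambda t}}$ from Fact~\ref{f:magi} and applying Markov's inequality gives a bound proportional to $t^{|\delta_0|+1/(2(1+\beta))}\exp\rbr{-(\kappa(g)\mu-\lambda\beta/(1+\beta))t}$, whose exponent is strictly positive by the subcritical hypothesis. Hence $J_t\rightarrow^P 0$, which combined with $I_t\rightarrow^d\zeta$ yields the theorem. The only genuinely delicate step, exactly as in the critical case, is the swap between the random normalizations $\rbr{e^{\lambda(k+1)}|X_{t-k-1}|}^{\pb}$ and $|X_t|^{\pb}$ in the analysis of $I_t$; everything else is cleaner here than in Theorem~\ref{t:critical} because the series $\sum m_k[g]$ converges absolutely rather than being summed only in a Ces\`aro sense.
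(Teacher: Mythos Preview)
Your argument is correct and follows the paper's proof of Theorem~\ref{t:subcritical} essentially line by line: the same $I_t/J_t$ split at $t-\ln t$, the same auxiliary $\tilde I_t$ handled via Lemma~\ref{l:jointConvergence} with $\theta_k=\theta e^{-\lambda/(1+\beta)}$ and Lemma~\ref{l:series}, the same swap $\tilde I_t\to I_t$ by repeating \eqref{ec:decomp}--\eqref{ec:milestone2} (where the critical-case identity becomes the inequality $\lambda/(1+\gamma)-\lambda/(1+\beta)\leq -\delta_0$, which only helps), and the same treatment of $J_t$ via Lemma~\ref{l:singledelta} on the event $A_t$. Your explicit bookkeeping of the exponent $\lambda-\kappa(g)\mu-\lambda/(1+\beta)=\lambda\beta/(1+\beta)-\kappa(g)\mu<0$ for $J_t$ is a slightly cleaner write-up than the paper's, but the content is identical.
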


\begin{proof}
Recalling the decomposition~\eqref{e:decomposition} we write
\begin{equation}\label{es:cut}
|X_t|^{-\pb } {\ddp{X_t}{g}}{}=|X_t|^{-\pb }\suma{k}{0}{\lfloor t-\ln t\rfloor } M_k^t[g]+|X_t|^{-\pb }\suma{k}{\lceil t-\ln t\rceil}{t} M_k^t[g] =: I_t + J_t.
\end{equation}
We will show that $I_t \to^d \zeta$ and $J_t \to^d 0$, where $\zeta$ is as prescribed in the theorem.\\
\textbf{Convergence of $I_t \to^d \zeta$}.
We define
\begin{equation}
  \tilde{I}_t := \suma{k}{0}{\lfloor t-\ln t\rfloor }\frac{M_k^t[g]}{\rbr{{e^{\lambda (k+1)} |X_{t-k-1}|}}^{1+\beta}}\nonumber.
\end{equation}
Applying Lemma~\ref{l:jointConvergence} with $n=\lfloor t-\ln t \rfloor$ and $\theta_k=\frac{\theta}{(e^{\lambda})^{\pb}}$ yields
$$\mbr{\E e^{i\theta \tilde{I}_t}-\exp\rbr{\theta^{1+\beta} \rbr{\suma{k}{0}{\lfloor t-\ln t \rfloor} m_k[g]}}}\leq e^{-p\ln t}=t^{-p}$$
for some $p>0$. Now the convergence $\tilde{I}_t \rightarrow^d \zeta$ follows by Lemma~\ref{l:series}.

The proof of this part concludes once we show
\begin{equation*}
  |\E e^{i\theta \tilde{I}_t} - \E e^{i\theta I_t}| \to 0.
\end{equation*}
To this end one can follow the lines of the proof in the critical case. Starting from \eqref{ec:decomp} the calculations are virtually unchanged until \eqref{ec:milestone}. Further, we use the inequality $\frac{\lambda}{1+\gamma}-\frac{\lambda}{1+\beta}\leq-\frac{\gamma\lambda-\kappa(g)(1+\gamma)\mu}{1+\gamma}$ instead of equality which is even better and lets us to arrive to \eqref{ec:milestone2}.
\\
\textbf{Convergence of $J_t \to^d 0$}. By Lemma~\ref{l:singledelta} we know that for any $\gamma<\beta$ we have
$$\nbr{M_k^t[g]}_{L^{1}}\leq \nbr{M_k^t[g]}_{L^{1+\gamma}}\leq C(\gamma)
e^{\frac{\lambda}{1+\gamma}t} e^{\frac{\gamma\lambda-\kappa(g)(1+\gamma)\mu}{1+\gamma}k}\leq C(\gamma)e^{-\delta t}e^{\frac{\lambda}{1+\gamma}(t-k)}e^{\frac{\lambda}{1+\beta}k}$$
for some $C(\gamma),\delta>0$.
In the last step we used~\eqref{es:subcritical}. Proceeding in a very similar way as in the critical case we get
\begin{equation*}
\mbr{1-\E\kbr{\exp\rbr{i\theta J_t}}}\to 0.
\end{equation*}
Thus $J_t \to^d 0$, what combined with~\eqref{es:cut} and $I_t \to^d \zeta$ yields the thesis of the theorem.
\end{proof}

\section{Conclusions and open questions}\label{s:open}
In the paper we presented a comprehensive picture of higher order behaviour of a branching particle system whose branching law has heavy tails. This goes in par with finding of \cite{Adamczak2015} and \cite{Ren2014} for the system with finite variance branching law.

Guided by the findings of \cite{Ren2017c} we conjecture that the qualitative behaviour is similar for any system in which particles move according to strongly mixing diffusion process (though the case of the Ornstein-Uhlenbeck process is particularly easy due to its simple spectral decomposition).

We conjecture also that in the subcritical and supercritical cases the qualitative behaviour does not depend on the branching law. More precisely consider a branching law in the domain of attraction of $(1+\beta)$-stable law, with the generating function $F(s)=ms-(m-1)+(m-1)(1-s)^{1+\beta}L(1-s)$, where $L$ is a slowly varying function at $0$. We conjecture that the statements of Theorem \ref{t:supercritical} and Theorem \ref{t:subcritical} hold true with only minor modifications. The critical case seems to be more subtle, in our preliminary calculations we found that the behaviour may fall in either regime depending on the properties of $L$.

\subsection*{Acknowledgments} We would like to thank prof. A. Talarczyk-Noble for her support during this work.

\bibliography{CLTPapers}

\end{document}